\documentclass[paper=a4, fontsize=11pt]{scrartcl}

\usepackage{amsmath,amsfonts,amsthm} % Math packages
\usepackage{bm} 
\usepackage{dsfont}
\usepackage[pdftex]{graphicx}	
\usepackage{enumitem}
\usepackage[numbers]{natbib}

\usepackage{url}
\usepackage{colortbl}
\usepackage{booktabs}
\usepackage{tabularx} 
\usepackage{float} 
\usepackage{multirow}%
\usepackage{fullpage}
\usepackage{natbib}
\usepackage{subcaption}  %create multi-part figures
\usepackage{graphicx}
\usepackage{tabularx} % for getting wide tables to fit properly
\usepackage{float} % Get access to [H] argument for figure placement
\usepackage{caption}

\usepackage{algorithm}
\usepackage{algpseudocode}
\usepackage{adjustbox}
\usepackage{xr-hyper} 

\usepackage[colorlinks=true,citecolor=blue,urlcolor=blue]{hyperref}
\usepackage{natbib}

\usepackage{graphicx}
\usepackage[english]{babel}
\usepackage{enumitem}
\usepackage{dsfont}
\usepackage{amsmath,amssymb}

\newcommand{\diam}{\operatorname{diam}}

\newcommand{\pr}{\mathbb{P}}

\newcommand{\argmin}{\operatornamewithlimits{\arg\min}}
\newcommand{\ind}{\mathds{1}}

\newcommand{\PP}{\mathbb{P}}
\newcommand{\RR}{\mathbb{R}}

\newcommand{\EE}{\mathbb{E}}
\newtheorem{theorem}{Theorem}
\newtheorem{definition}[theorem]{Definition}

\newtheorem{corollary}[theorem]{Corollary}
\newtheorem{proposition}[theorem]{Proposition}
\newtheorem{lemma}[theorem]{Lemma}
\newtheorem{example}{Example}

\usepackage[T1]{fontenc}

\usepackage[english]{babel}															% English language/hyphenation
\usepackage{amsmath,amsfonts,amsthm} % Math packages
\usepackage{bm} 
\usepackage{dsfont}
\usepackage[pdftex]{graphicx}	
\usepackage{enumitem}

\usepackage{url}
\usepackage{colortbl}
\usepackage{booktabs}
\usepackage{tabularx} 
\usepackage{float} 
\usepackage{multirow}%
\usepackage{fullpage}
\usepackage{natbib}
\usepackage{graphicx}
\usepackage{tabularx} % for getting wide tables to fit properly
\usepackage{float} % Get access to [H] argument for figure placement
\usepackage{caption}

\usepackage{xr-hyper} 

\usepackage[colorlinks=true,citecolor=blue,urlcolor=blue]{hyperref}
\usepackage{natbib}

\usepackage{chngcntr}
\counterwithout{table}{section}
\counterwithout{figure}{section}
\newcommand{\horrule}[1]{\rule{\linewidth}{#1}} 	% Horizontal rule

\newcommand{\rev}[1]{\textcolor{black}{#1}}
\title{
		\vspace{-1in} 	
		\usefont{OT1}{bch}{b}{n}
		\horrule{0.5pt} \\[0.4cm]
		\Large 
        Revisiting local regression: shape regularity, uniform rates, and the limits of random splits
		\\
		\horrule{2pt} \\[0.1cm]
}

\date{\large \today}

\author{ \textbf{Jérémy Bettinger, François Portier, Adrien Saumard} \\
\large \href{mailto:jeremy.bettinger@ensai.fr}{jeremy.bettinger@ensai.fr} ; \href{mailto:francois.portier@ensai.fr}{francois.portier@ensai.fr} ; \href{mailto:adrien.saumard@ensai.fr}{adrien.saumard@ensai.fr} \\
\large Department of Statistics, \\
\large University of Rennes, ENSAI, CNRS, CREST-UMR 9194, F-35000 Rennes, France }

\begin{document}

\maketitle

% If your paper is accepted and the title of your paper is very long,
% the style will print as headings an error message. Use the following
% command to supply a shorter title of your paper so that it can be
% used as headings.
%
%\runningtitle{I use this title instead because the last one was very long}

% If your paper is accepted and the number of authors is large, the
% style will print as headings an error message. Use the following
% command to supply a shorter version of the authors names so that
% they can be used as headings (for example, use only the surnames)
%
%\runningauthor{Surname 1, Surname 2, Surname 3, ...., Surname n}

\begin{abstract}

% --- ORIGINAL ABSTRACT (kept for comparison) ---------------------------------
% Considering pointwise and sup-norm estimation, we analyze the non-asymptotic behavior of various non-parametric local averaging estimators for Lipschitz regression functions. We first establish a general deviation bound for estimators based on a VC family of localizing sets in the covariate space. We then introduce the concept of shape-regular local maps, in which local averaging is performed over sets with an ``almost isotropic'' geometry. We show that shape regularity is a fundamental cornerstone of statistical efficiency: it is both necessary and sufficient to achieve optimal rates, up to logarithmic factors. Turning to specific examples, we establish that (i) the simple $k$-nearest neighbor method achieves optimal error rate, in contrast to (ii) trees following random split construction. To bridge this gap, we propose an alternative tree construction that explicitly incorporates shape regularity constraints, thereby ensuring optimal estimation of Lipschitz functions.
% --- REWRITTEN ABSTRACT (framing pass) ---------------------------------------
\rev{Considering pointwise and sup-norm estimation, we analyze the non-asymptotic behavior of local averaging estimators for Lipschitz regression functions. Building on a general deviation bound for estimators based on a VC family of localizing sets, we introduce the notion of \emph{shape-regular} local maps, where averaging is performed over sets with an almost isotropic geometry. Our main message is a characterization: shape regularity is both \emph{necessary and sufficient} to attain optimal rates, up to logarithmic factors. Necessity is established non-asymptotically through an explicit anisotropic example, sharpening a phenomenon previously understood only heuristically in asymptotic theory. We then draw two consequences. First, the simple $k$-nearest neighbor rule is shape-regular by construction and attains the optimal rate, even on unbounded supports. Second, and perhaps surprisingly, the popular random-split condition for trees -- known to ensure consistency and vanishing cell diameters -- does \emph{not} guarantee optimal rates: for blind tree constructions, the cell aspect ratio diverges exponentially with depth, so that shape regularity fails with positive probability. This identifies the absence of a geometric correction mechanism, rather than a slowly shrinking diameter, as the obstruction to optimality. Motivated by this gap, we propose a tree construction that enforces shape regularity through a simple constraint on admissible splits, and prove a uniform deviation inequality showing that it restores the optimal rate for Lipschitz functions.}

\end{abstract}

\section{Introduction}\label{s1}

Consider the standard regression problem where the goal is to estimate the regression function of a random variable $Y\in \mathbb R$ given the covariates vector $X\in \mathbb R^d$, defined as $ g(x) := \mathbb E [ Y| X= x]$, $x\in \mathbb R ^d$. One leading approach, called \textit{local regression} or \textit{local averaging}, consists in averaging the observed response variables, restricted to covariates that lie in a small region of the domain $\mathbb R ^d$. Local regression methods include kernel smoothing regression \cite{nadaraya1964estimating}, nearest neighbors algorithm \cite{fix1989discriminatory,cover1968estimation} and regression trees or, more generally, partitioning regression estimators \cite{breiman1984classification,nobel}. We refer to the books
\cite{devroye96probabilistic,gyorfi2006distribution} for an overview of local regression methods and to \cite{biau2015lectures} for a precise theoretical account on the nearest neighbors algorithm.

Concerning the estimation problem, when the error is measured in terms of the mean squared error ($L_2$-error), the optimal convergence rates are known \cite{stone1982optimal} and depend on the smoothness of the regression function $g$. Whether or not these convergence rates are achieved often serves as a theoretical baseline to evaluate the accuracy of local regression methods. For example, a Lipschitz function $g$ can only be approximated at the rate $ n ^{- 1/ (d+2)  }$ in general, when $n $ independent observations are given. Many of the above estimators are known to achieve optimal convergence rates. The nearest neighbors, the Nadaraya-Watson and the fixed partitioning (histogram) regression estimators are all optimal for Lipschitz functions (as well as for twice differentiable functions for the first two listed methods), as explained in \cite{biau2015lectures},  \cite{tsy_08} and Chapter 4 of \cite{gyorfi2006distribution}, respectively. Furthermore, the Nadaraya-Watson \cite{einmahl2000,gine+g:02} and the nearest neighbors \cite{kpotufe2011k,chaudhuri2014rates,jiang2019non,portier2021nearest} estimators are both known to achieve a rate of sup-norm convergence that is of the same order as the $L_2$-rate, up to a logarithmic term.

Regression trees \cite{breiman1984classification} occupy a central place among local methods as they provide a data-driven recursive partition of the feature space, forming the basic building blocks of modern ensemble methods such as random forests \cite{breiman2001random,biau2016random}. A particularly tractable class of tree constructions are those where the split mechanism is chosen independently of the observed responses. Although less common in practice than impurity-based CART splits \cite{breiman1984classification}, such data-independent splitting schemes \cite{biau2012analysis,biau2016random} provide a clear mathematical framework for isolating the effect of tree depth and partition geometry on statistical performance. Under the \textit{random split condition}, the splitting direction at each step is selected randomly and independently of the sample, with every direction being chosen with strictly positive probability. This condition is employed, for instance, in \cite{meinshausen2006quantile} for quantile estimation, in \cite{wager2014asymptotic,wager2018estimation} for heterogeneous treatment effect estimation with inferential guarantees, and in \cite{biau2012analysis,duroux2018impact} for regression estimation. Under this condition, the diameter of each cell can be shown to shrink to zero \cite{meinshausen2006quantile,biau2012analysis} at a certain rate \cite{wager2018estimation,duroux2018impact}, which is key to establishing the consistency \cite{meinshausen2006quantile,biau2012analysis} as well as error bounds \cite{wager2018estimation,duroux2018impact} for the resulting estimates. Two notable constructions within this framework are the median forest and the centered forest. The median forest \cite{duroux2018impact} selects the split coordinate uniformly at random and cuts at the median of the chosen side, while the centered forest \cite{biau2012analysis} cuts at the midpoint. The former achieves a better convergence rate than the latter. However, neither attains the optimal rate for Lipschitz functions.
%In \cite{zhang2024adaptive}, a different side selection mechanism is proposed: the split is always assigned to whichever side has been chosen least often. The resulting rate is optimal for Hölder functions, suggesting that the randomness inherent in the random-split condition may be unnecessary, or at least suboptimal. All the results mentioned above are upper bounds; no matching lower bound is currently known.

%(known results on CART-like regression trees.)
%
Despite the many existing results available for the Nadaraya-Watson and nearest neighbors regression estimators, and also fixed or purely random partitioning regression rules, \rev{little is known} about local regression based on data-dependent partitions, such as the well-known CART regression tree \cite{breiman1984classification}. Such an algorithm is indeed much harder to analyze mathematically.
First results on data dependent partitions can be found in \cite{stone1977consistent}, but they are restricted to cases where the partition depends only on the covariates, as in nearest neighbors regression or for statistically equivalent blocks \cite{anderson}. 
%  In such a case, one might work conditionally on the covariates and rely on standard tools for  sums of independent random variables.  
More advanced results, that are valid for general data dependent partitioning estimators, are obtained in \cite{gordon1980consistent,breiman1984classification,nobel}, where conditions are given to ensure almost sure $L_2$-consistency. The typical assumptions that are required in the previous works include (i) large enough points in each partition element and (ii) small diameter, while having (iii) a reduced complexity on the partition elements. Note also that Theorem 1 in \cite{scornet2015consistency} can be applied to CART regression algorithm and gives sufficient conditions for the $L_2$-consistency.

Beyond consistency, \rev{little is known} about the convergence rates of data-dependent, CART-like regression tree estimators. Recent studies  \cite{chi2022asymptotic, mazumder2024convergence} have obtained convergence rates for the $L_2$-error under the so-called \textit{sufficient impurity decrease} (SID) condition,  a restrictive assumption on the splitting rule that is not always satisfied in practice. The rate of convergence depends on a parameter -- denoted $\lambda$ in \cite{ mazumder2024convergence} -- quantifying the strength of the SID condition, and it is not \textit{a priori} easy to discuss the rate optimality. %Nonetheless, it is shown in  \cite{ mazumder2024convergence} that for a univariate linear regression function, the rate obtained through the SID condition is actually optimal. A specific class of additive regression functions achieving a particular smoothness assumption called the ``locally reverse Poincaré inequality'' is provided in  \cite{ mazumder2024convergence}, satisfying the SID condition. 
In another direction, the recent negative results in \cite{cattaneo2022pointwise} show that CART regression can be sub-optimal, and even inconsistent, for the pointwise -- and also uniform -- estimation error. Such phenomenon does not occur when focusing on the $L_2$-error, but as highlighted in \cite{cattaneo2022pointwise}, pointwise convergence of decision trees is also essential for reliability of the methodologies developed in some causal inference and multi-step semi-parametric settings for instance.

%
%(our results)
%
\rev{Despite this rich literature, a basic question remains without a non-asymptotic answer: \emph{what geometric property of the localizing sets is responsible for optimal pointwise and uniform rates?} Asymptotic theory has long suggested that the cells should be ``well-shaped'' \cite{gyorfi2006distribution}, but to our knowledge no result establishes such a property as both necessary and sufficient with explicit, finite-sample rates. This question is not merely theoretical: the recent negative results in \cite{cattaneo2022pointwise} show that widely used recursive partitioning schemes can be sub-optimal, or even inconsistent, precisely for the pointwise and uniform errors that matter in causal and semi-parametric applications. Our aim is to isolate the geometric condition that separates optimal from sub-optimal local averaging, and to show that it has concrete algorithmic consequences.}

In this work, we develop a theory for obtaining pointwise and uniform rates of convergence for a large class of local regression estimators, that includes previously mentioned partitioning estimators. More precisely, in a random design regression with heteroscedastic sub-Gaussian noise framework, the theory allows the localization method to be general, in the sense that it may depend on a different source of randomness or on the covariates sample (as for nearest neighbors) and even on the full regression sample (as in CART). 

We first obtain a general probability upper bound (Theorem \ref{th:general}) for the pointwise estimation error of any estimator that is based on a VC class of localizing sets. In contrast to the $L_2$-error bound \cite{lugosinobel}, where the combinatorial size of the class of all partitions must be controlled, focusing on the pointwise error allows to invoke the Vapnik dimension of the \textit{elements} of the partition.
%We indeed point out that the major advantage of focusing on the pointwise error, compared to the $L_2$-error, is that it allows the use of the Vapnik dimension of a class containing the \textit{elements} of the random partition, instead of having to control the combinatorial size of the class of the \textit{entire} partitions themselves, as in \cite{lugosinobel}. 
Our bound reveals a trade-off between the cell diameter and its empirical measure, recovering, in a non-asymptotic framework, the essence of classical conclusions from asymptotic theory \cite{gyorfi2006distribution}. This trade-off is further analyzed through the property of \textit{shape regularity}, that requires the localizing sets to exhibit isotropic geometry, which is shown, with the help of an example, to be necessary for reaching the optimal error bound. We also show, for general local maps estimator that shape regularity is actually sufficient to obtain optimal error bound (Theorem \ref{th2:general}).

%Furthermore, our result reveals that achieving optimal rates is closely tied to the geometric control of the partition (Section \ref{sectionSR}). 
%The first result that is established is an upper bound - valid with high probability - on the uniform error of local regression estimators. The bound involves two terms: 
 %In that, we extend the asymptotic results obtained in  \cite{gordon1980consistent,breiman1984classification,nobel,scornet2015consistency} where the $L_2$-consistency is obtained (without rates of convergence). 

%A central result, proven in Section \ref{s30} and derived from Vapnik-Chervonenkis inequalities and sub-Gaussianity, Theorem \ref{th:general}, provides an upper bound on the estimation error by . This fundamental trade-off reveals that achieving optimal rates is closely tied to the geometric control of the partition (Section \ref{sectionSR}). 

%In Section \ref{s_appli}, 
We then examine several applications of our theory, focusing on the shape regularity property to characterize the error bound attained in each case.

\begin{itemize}
\item[(i)] 
%how minimal mass assumptions and shape-regular local maps allow estimators to reach minimax optimality, 
We revisit the classic $k$-nearest neighbors algorithm establishing optimal error bound that extend some recent results \cite{jiang2019non,portier2021nearest} to unbounded covariate support using the so called \textit{strong minimal mass assumption}  \cite{gadat2016classification}.  
    \item[(ii)]  As detailed before, many theoretical results from regression tree and random forest literature rely on  the random split condition \cite{biau2012analysis,duroux2018impact,wager2018estimation}, i.e., any direction can be split with positive probability.
%While we prove the consistency of a subclass of these Wager-type trees, we highlight the insufficiency of standard Wager conditions to guarantee shape regularity. %The geometric integrity of these trees rests on two key hypotheses: the random-split property and $\alpha$-regularity. The former ensures that every feature has a non-vanishing probability of being selected, providing comprehensive coverage of the feature space. While this ensures the consistency of certain Wager-type trees, it often leads to non-adaptive splits along irrelevant dimensions, "wasting" model complexity compared to more greedy approaches like CART. In parallel, $\alpha$-regularity prevents the formation of degenerate leaves by maintaining a minimum fraction of data in each node. Regarding Wager-type trees, 
After establishing a rate of convergence for such random split tree,  %we also prove that there exist, with positive probability, Wager-type trees satisfying both the random-split and $\alpha$-regularity hypotheses which fail to be shape regular. 
we show that for ``blind'' tree constructions -- characterized by the independence between the split directions and the split positions -- the aspect ratio between the largest and smallest sides of a cell diverges as the depth increases, leading to sub-optimal convergence rates.
% --- ORIGINAL (kept for comparison) ---
% This provides a fundamental insight: %these are fundamentally distinct concepts, as 
% the popular random split condition is insufficient to prevent the geometric degradation of the cells and thus fail to guarantee optimal convergence rates.  %This provides a fundamental insight: to achieve optimal rates, the underlying tree construction must be adaptive in some manner.
% --- REWRITTEN (framing pass) ---
\rev{This is, perhaps, counter-intuitive: the random-split condition is strong enough to force the cell diameters to zero -- hence consistency -- yet too weak to control their \emph{shape}, so that optimality fails. The obstruction is not an insufficiently small diameter, as one might expect, but the unchecked elongation of the cells.}

%The previous two points support the claim that shape regularity is necessary %(Proposition \ref{contre_ex}) 
%and sufficient 
%(Theorem \ref{main_result_localizing_map}) 
%to obtain optimal rates -- up to logarithmic factors -- for the pointwise and uniform estimation errors. 
\newpage

\item[(iii)] The above two points reveal a tension: a simple regression rule such as $k$-NN achieves optimal rates, whereas a tree with a more complex rule fails to do so. This motivates exploring splitting rules beyond that of the random split. This study culminates in the proposal of a new tree construction that explicitly incorporates shape regularity constraints, ensuring geometric stability while preserving data-driven adaptivity. We derive a deviation inequality for the uniform estimation error of those shape regular trees, grown by enforcing a minimum number of points per leaf and a simple rule maintaining the shape regularity of the localizing sets. %In the case of partitions made of hyper-rectangles, such as for CART-like algorithms, the shape-regularity condition reduces to a control of the largest side length of the localizing set by its smallest side length. Recent results obtained in \cite{cattaneo2022pointwise} indeed tend to indicate that such rules additions are likely to be unavoidable to ensure good pointwise convergence rates of CART-like regression trees. 
%\color{red}Optimal rates have been obtained recently in \cite{zhang2024adaptive} using a different approach. Instead of the usual random split mechanism, the authors propose to split the less split coordinate. This is different from our approach that allows any sample-based split as long as this one does not alter shape regularity.
\color{black}
\end{itemize}

% It is worth noting that our approach substantially differs from the use of the SID condition \cite{ mazumder2024convergence} described earlier. The latter indeed ensures convergence rates for the $L_2$-error and is highly linked to the precise cost in the splitting rule of CART, defined through the so-called impurity gain. Moreover, the SID condition is expressed through the behavior of the unknown regression function and covariates distribution, and cannot hold for any regression function. In contrast, our shape-regularity condition does not depend on the regression function $g$, neither on the covariates distribution, and only imposes a restriction that may be effective with any cost function involved in the splitting rule. This makes our shape regularity condition easy to guarantee in practice as illustrated in Algorithm \ref{alg:cart-like} (see Section \ref{s53}), where a general cost function is used to build the tree.

%Through this progression, shape regularity is revealed as the pivotal link between the geometric structure of a partition and its statistical efficiency.

The outline is as follows. We state in Section \ref{s2} some necessary background and formulate the setting of local regression map estimators. 
 Section \ref{s30} then gives a first deviation inequality for local regression map estimators. Section \ref{sectionSR} introduces the shape regularity property and reveals its importance to obtain optimal error bound. Section \ref{s_appli} covers the three applications described in (i), (ii), and (iii) above, treated respectively in Sections  \ref{sec_k_nn}, \ref{sec_wag}, and \ref{s53}.
%beginning in Section \ref{sec_wag} with Wager-type trees, %where we compare Wager's inherent regularity ($\alpha$-regularity and directional constraints) with our notion of shape regularity. Furthermore, we emphasize the critical importance of algorithmic adaptivity, demonstrating how underlying dependencies in the splitting process are essential for maintaining geometric balance and achieving optimal performance. Next, 
%following  by Section \ref{sec_SR_opt} is dedicated to pointwise and uniform convergence bounds for data-dependent regression maps under shape regularity constraints, including an application to $k$-nearest neighbors. We conclude with a final application to CART in Section \ref{s53}. 
Section \ref{sec:perspective} offers some perspectives for further research to overcome the dimensionality curse of regression trees. All the mathematical proofs are given in the Appendix.

\section{Mathematical background}\label{s2}

\subsection{Regression set-up}\label{s21}

Let $(X,Y)$ be a random vector with probability distribution $ P$ on $\mathbb R^d \times \mathbb R$, where $d\geq 1$ is the dimension of covariates vector $X\in S_X \subset\mathbb R^d$ and $Y \in \mathbb R$ is the output variable.  The goal is to estimate the conditional expectation $x\mapsto g(x) = \mathbb E [ Y|X= x]$, $x\in S_X$. The quality of the estimation of the function $g$ by an estimator $\hat g$ will be assessed with the help of the uniform norm defined as $\sup_{x\in S_X} | \hat g(x) - g(x) | $. %We also further assume that $S_X$ is open and convex to allow to differentiate $g$ in the classical sense. 
For a fixed $x\in S_X$, we also address the estimation error of the value $g(x)$ through the analysis of the deviations of the quantity $| \hat g(x) - g(x) | $. 

The following assumption on $ P$ will be key in this work and, roughly speaking, amounts to assume that the noise $\varepsilon = Y  - g(X)$ in the regression model is lightly tailed.

\begin{enumerate}[label=(E), wide=0.5em,  leftmargin=*]
\item \label{cond:epsilon} 
The random variable $\varepsilon$ is sub-Gaussian conditionally on $X$ with parameter $\sigma^2$. That is, $\mathbb E [\varepsilon |X ]= 0$ and for all $\lambda \in \mathbb R$, $$ \mathbb E [ \exp( \lambda\varepsilon  ) |X ] \leq \exp\left( \frac{ \lambda ^2 \sigma^2}{ 2 }\right).$$
\end{enumerate}
Note that under assumption \ref{cond:epsilon}, the noise term $\varepsilon$ is squared integrable and it is allowed to depend on the covariates $X$.  In particular, the noise is \textit{heteroscedastic}, with a uniform upper bound on its conditional variance: almost surely, we have $\mathbb{E}[\varepsilon^2\vert X] \leq \sigma^2$. A more restrictive assumption is when $\varepsilon$ is independent of $X $ and sub-Gaussian with parameter $\sigma^2$.

%Associated to the regression model, 

A real function $h$ on $S_X$ is called $L$-Lipschitz as soon as $ | h(x) - h(y) |\leq L \|x-y\|_2$ for all $(x,y)\in S_X^2$.
  In what follows, we will consider regression functions that are Lipschitz over the domain $S_X$:
\begin{enumerate}[label=(L), wide=0.5em,  leftmargin=*]
  \item \label{cond:reg4} The function \( g: x \mapsto \mathbb E [ Y|X= x]\) is $L$-Lipschitz on \( S_X \). 
\end{enumerate}
Define also the local Lipschitz constant $ L(V)$ of $h$ over $V\subset S_X$ as the smallest constant $L>0$ such that, for all $(x ,y)$ in $V^2$,
$$     |h(x) - h(y) | \leq L  \|  x-y\|_2.$$ For a $L$-Lipschitz function, it holds $L(V)\leq L$ for any set $V\subset S_X$.

In this work, all the estimators will be based on the sample $\mathcal{D}_n=\left\{(X_i,Y_i) \,  : \, i=1,\ldots,n\right\}$ which satisfies the following assumption:

\begin{enumerate}[label=(D), wide=0.5em,  leftmargin=*]
\item \label{cond:D} 
The random variables $\{(X,Y), (X_i,Y_i)_ {i=1,\ldots,n} \}$ are independent and identically distributed with common distribution $  P$. 
\end{enumerate}
Let us introduce the notation $P^X$ as the marginal distribution of $X$. Set also $\varepsilon_ i := Y_i - g(X_i) $ for each $i=1,\ldots, n$. 
%Let us further define some quantities that will be instrumental in our analysis. 
In the following, $\lambda(V)$ denotes the Lebesgue measure of any set $V$, which we simply refer to as its volume. Moreover, for any set $V$, its diameter is given by the formula
$$ \diam (V)  = \sup_{(x,y)\in V\times V} \|x-y\|_2,$$
where $\|x\|_2^2 = \sum_{k=1} ^d x_k^2$. %Note that since all norms are equivalent in finite dimension, any diameter may be chosen, up to a constant factor depending on a power of $d$. 
The closed ball with center $x\in \mathbb R^d$ and radius $r>0$ is denoted by $B(x,r)$.

\subsection{Local regression maps}\label{s22}

We consider general local regression estimators using the concept of local maps so as to include regression trees and partitioning estimators but also the nearest neighbors regression rule. Let $\mathcal B(S_X)$ denote the Borel $\sigma$-algebra on $S_X$.

\begin{definition}
    A local map for a variable $X$ is a mapping $ \mathcal V : S_X \to \mathcal B(S_X) $ such that for all $x\in S_X$, $x\in \mathcal V (x)$. 
\end{definition}

For any local map $\mathcal V$, the associated regression estimator is given by
\begin{equation*}
    \forall x \in S_X, \quad \hat g_{\mathcal V}(x) = \frac{\sum_{i=1} ^ n Y_i\mathds 1 _{ \mathcal V(x) }(X_i) }{\sum_{i=1} ^ n \mathds 1 _{ \mathcal V(x) }(X_i)  }\; , 
\end{equation*}
with the convention that $0/0 = 0$, which is in force in the subsequent work. %This work focuses on continuous covariates, therefore all local maps will have their images to sets with positive Lebesgue measure. 
Local maps  $\mathcal V $ depending on the sample $(X_1,Y_1),\ldots , (X_n, Y_n)$ are of particular interest. This is indeed the case for some adaptive tree constructions, as well as for the nearest neighbors algorithm. Let us also stress out that similar maps were introduced in \cite{nobel}, where they are however restricted to partition based estimator.  %We list now some examples that are included in the previous framework. Each is associated to a different map $\mathcal V$ and more particularly to a different dependence structure with respect to the sample. %These contrasting properties between the estimators are also described in \cite{JMLR:v9:biau08a,biau2012analysis}.

The local regression map framework is particularly interesting because it includes a variety of different methods, e.g., fixed partitioning,  purely random trees, nearest neighbors, and CART-like constructions, and each method induces a particular dependence structure when creating the partition.

\begin{example}[fixed hyper-rectangles partition]\label{ex1}
    The most simple case for the dependence structure of the local map is when the partition is fixed, not random. Suppose $S_X= (0,1]^d$. For each coordinate $k =1,\ldots, d$, consider the collection $0 = u_0^{(k)} < u_1^{(k)} < \ldots < u_{N_k}^{(k)} = 1$. This allows to introduce a partition of $S_X$ made of $ \prod_{k=1} ^d N_k $ elements defined as $ V_{i_1,\ldots, i_d} =  \prod_{ k = 1 }^d (u_{i_k}^{(k)}, u_{i_k + 1 }^{(k)} ]$ for each d-uplet $(i_1,\ldots, i_d)$ satisfying $i_\ell\in \left\{0,\ldots,N_\ell-1 \right\}$ for $\ell\in \left\{1,\ldots,d \right\}$. Note that each $V_{i_1,\ldots, i_d}$ has a positive Lebesgue measure $ \prod_{ k = 1 }^d (u_{i_k + 1 }^{(k)} - u_{i_k}^{(k)} )$.
\end{example} 

\begin{example}[purely random trees]
In contrast to Example \ref{ex1}, a \textit{purely random tree} construction, as described in \cite{arlot2014analysis} and initially introduced in \cite{breiman2000some},  consists in using some randomness that is independent of the observed sample. It includes centered (resp. uniform) trees, for which the split direction is uniformly distributed along the space coordinates and the split location of the selected side is at the center (resp. uniformly distributed). It also includes Mondrian trees \citep{lakshminarayanan2014mondrian}, where the split direction is selected at random depending on the shape - i.e. side lengths - of the leaf. This will be explored in a forthcoming article. % (with the difference that also the leaf to be split is selected at random). %Random forest estimators resulting from such trees are studied in \cite{JMLR:v9:biau08a}. 
\end{example}

\begin{example}[nearest neighbors regression]\label{ex3}
    Nearest neighbors algorithm induces a Voronoi-like partition, which dependence structure is different from the one of purely random trees, since the nearest neighbors partition depends on the data through the location of the covariates in the space. The $k$-nearest neighbors ($k$-NN) estimator (see \cite{biau2015lectures} for a recent textbook) is defined, for each $x\in S_X$, as the average responses among the $k$-nearest neighbors to point $x$. As such, we have
$$ \hat g_{NN}(x) =  \frac{1}{k }  \sum_{i=1}^n  Y_i\ind_{B(x, \hat \tau_k(x))} (X_i ) ,$$ where $\hat \tau_k(x)$ is the so-called $k$-NN radius defined as the smallest radius $\tau>0 $ such that $k \leq\sum_{i=1}^n  \ind_{B(x,  \tau ) } (X_i )$. Note that here the local map is $\mathcal V (x)  = B(x, \hat \tau_k(x))$ and therefore  depends on $X_1,\ldots, X_n$.
\end{example}

\begin{example}[{CART-like trees}]
Regression trees are a class of partition based estimators where the partition is recursively built, and made of hyper-rectangles. Therefore, they are part of the local map framework, just as examples 1 and 2 above. Usual regression trees are grown sequentially by splitting stage-wise each (adult) leaf into two (children) leafs. In most cases, as in CART regression \citep{breiman1984classification}, each cell division results from splitting along one single variable according to a data-based criterion. This precise step is crucial as it allows to adapt the partition to the prediction problem. For instance, if one variable is not significant then it must be better not to split with respect to it. This enables to obtain a flexible regression estimator, which behaves well in many problems even when the dimension $d$ is rather large. The fact that the resulting partition depends on the full data (including the response) is however problematic for the theory since in this case, the local averaging estimator is not a sum over independent random variables, thus prohibiting a direct application of concentration inequalities for sums of independent observations. Finally, it is worth mentioning that CART regression trees are the ones that are usually combined in the standard Random Forest regression algorithm, as introduced in \cite{breiman2001random}.
 \end{example}

\section{A deviation bound for local map estimators}\label{s30}
Considering the local map estimator definition given in Section \ref{s22}, the first step in analyzing its pointwise error is standard, and consists in considering the following bias-variance decomposition,
\begin{equation*}
    \hat g_{\mathcal V}(x) - g(x) = \underbrace{\frac{\sum_{i=1}^n \varepsilon_i \mathds 1 _{ \mathcal V(x) }(X_i)}{\sum_{j=1}^n \mathds 1 _{ \mathcal V(x) }(X_j)}}_{\text{variance term}}+\underbrace{\frac{\sum_{i=1}^n \left(g(X_i) - g(x)\right) \mathds 1 _{ \mathcal V(x) }(X_i)}{\sum_{j=1}^n \mathds 1 _{ \mathcal V(x) }(X_j)}}_{\text{bias term}}.
\end{equation*}

In this section, we shall first provide a preliminary concentration bound for the variance term, which is free from any restriction on the covariate distribution. Subsequently, we leverage this result in the regression framework to obtain a concentration bound for the estimation error.

%\subsection{A deviation bound for the variance term}\label{3.1}

The \textit{shattering coefficient}, as introduced in Vapnik's seminal work \cite{vapnik2015uniform} and  detailed for example in \cite{wellner1996,devroye96probabilistic},
is key to obtain upper bounds on certain empirical sums indexed by sets or functions. Let $\mathcal A$ be a collection of subsets of a set $ S$. Given an arbitrary collection $z =( z_1,\ldots, z_n)$ of distinct points in $  S$, consider the collection of $\mathbb R^n$-points
$ \ind _ {\mathcal A} (z)$ defined as $  \{ (\ind _ A (z_1) \ldots, \ind_A (z_n )) : A\in \mathcal A \}\subset \{0,1\}^n $. 
We have that $|\ind _ {\mathcal A} (z)  | \leq 2^n$ and when $   |\ind _ {\mathcal A} (z) |  = 2^n$ we say that $z$ is shattered by $\mathcal A$. An important quantity is then
 $$\mathbb S_\mathcal A(n) : =  \sup_{z\in \mathbb R^n} | \ind _ {\mathcal A} (z) | $$
which is called the shattering coefficient. %It represents loosely speaking the number of possible $\ind _ {\mathcal A}$.

We now provide a VC-type inequality tailored to the analysis of the variance term for local regression estimators. Recall that, by convention, $0/0=0$.

\begin{theorem}\label{1}
Let $n\geq 1$ and $\delta \in (0,1) $. Suppose that \ref{cond:epsilon} and \ref{cond:D} are fulfilled and that $\{\mathcal V (x) \, :\, x\in \mathbb R^d\} \subset \mathcal A$, a deterministic collection of sets in $\mathbb R^d$. The following inequality holds with probability at least $1 - \delta$,
$$\sup_{x\in \mathbb R^d} \dfrac{\sum_{i=1}^n \varepsilon_i \mathds 1 _{ \mathcal V(x) }(X_i)}{\sqrt{\sum_{j=1}^n \mathds 1 _{ \mathcal V(x) }(X_j)}} \leq \sqrt{2 \sigma^2 \log\left( \frac{\mathbb S_ {  \mathcal A   } (n)}{\delta} \right)}.$$
%$$\sup_{\mathcal{V} \in \mathcal{V}} \left|\dfrac{\sum_{i=1}^n \varepsilon_i \mathds 1 _{ \mathcal V(x) }(X_i)}{\sqrt{\sum_{j=1}^n \mathds 1 _{ \mathcal V(x) }(X_j)}} \right| \leq 2 \sqrt{2 \sigma^2 \log\left( \frac{\mathbb S_\mathcal V(n)}{\delta} \right)}.$$
\end{theorem}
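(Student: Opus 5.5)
The plan is to condition on the covariates $X_1,\ldots,X_n$. Given them, the supremum over $x$ becomes a maximum over finitely many index sets, which I handle with a sub-Gaussian tail bound and a union bound. The first observation is that possible data-dependence of $\mathcal V$ (even on the responses) is harmless. Since $\{\mathcal V(x) : x\in\mathbb R^d\}\subset\mathcal A$ and $\mathcal A$ is deterministic, the left-hand side is bounded by
\[
\sup_{A\in\mathcal A} \frac{\sum_{i=1}^n \varepsilon_i \mathds 1_A(X_i)}{\sqrt{\sum_{j=1}^n \mathds 1_A(X_j)}} .
\]
So it suffices to prove the bound for this quantity, which no longer involves $\mathcal V$. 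Each term depends on $A$ only through the trace pattern $I_A=\{i : X_i\in A\}\subset\{1,\ldots,n\}$. By the convention $0/0=0$, the empty pattern contributes $0$, and $0$ is below the claimed bound.

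Next I count patterns. Conditionally on $X_{1:n}$, the set $\{I_A : A\in\mathcal A\}$ has cardinality at most $\mathbb S_{\mathcal A}(n)$. If the $X_i$ are distinct, this is the definition of the shattering coefficient. If there are ties, patterns are determined by the $m<n$ distinct values, so the count is at most $\mathbb S_{\mathcal A}(m)\le \mathbb S_{\mathcal A}(n)$. The last inequality holds because adding points cannot decrease the number of traces.

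Then I need a sub-Gaussian bound for each fixed pattern. Under \ref{cond:D}, the pairs $(X_i,\varepsilon_i)$ are i.i.d. Hence, conditionally on $X_{1:n}$, the $\varepsilon_i$ are independent, and the conditional law of $\varepsilon_i$ given $X_{1:n}$ equals its law given $X_i$. By \ref{cond:epsilon}, each $\varepsilon_i$ is therefore conditionally centered and $\sigma^2$-sub-Gaussian. Fix a nonempty $I$ determined by $X_{1:n}$. Multiplying the moment generating functions shows that $|I|^{-1/2}\sum_{i\in I}\varepsilon_i$ is conditionally $\sigma^2$-sub-Gaussian. The Chernoff bound then gives
\[
\mathbb P\Big(|I|^{-1/2}\textstyle\sum_{i\in I}\varepsilon_i > t \,\Big|\, X_{1:n}\Big)\le e^{-t^2/(2\sigma^2)} .
\]

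Finally I take a union bound and integrate. A union bound over the at most $\mathbb S_{\mathcal A}(n)$ patterns bounds the conditional probability that the supremum exceeds $t$ by $\mathbb S_{\mathcal A}(n)e^{-t^2/(2\sigma^2)}$. Choosing $t=\sqrt{2\sigma^2\log(\mathbb S_{\mathcal A}(n)/\delta)}$ makes this equal to $\delta$. Taking expectation over $X_{1:n}$ removes the conditioning. The statement is one-sided, so no factor $2$ appears.

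The main obstacle is justifying the conditional independence and conditional sub-Gaussianity of the $\varepsilon_i$ given all covariates, together with the measurability of the supremum. The supremum is handled by noting that, given $X_{1:n}$, it is a finite maximum over patterns, so it is measurable. Everything else is routine.
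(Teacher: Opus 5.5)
Your proposal is correct and follows essentially the same route as the paper. Both condition on $X_{1:n}$ and replace the supremum over $x$ by a maximum over the at most $\mathbb S_{\mathcal A}(n)$ trace vectors $(\mathds 1_A(X_1),\dots,\mathds 1_A(X_n))$. Both then use conditional independence (the paper's Lemma~\ref{lemmaA}) to get a $\sigma^2$-sub-Gaussian normalized sum for each fixed trace, take a union bound, solve for $t$, and integrate over the covariates. Your explicit handling of ties among the $X_i$ and of the empty trace (via $0/0=0$) fills small details that the paper passes over silently.
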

Note that in Theorem \ref{1} above, only an upper bound is given but a lower bound is also valid, since the same holds true when each $\varepsilon _ i $ are replaced by $ -\varepsilon_i$. Moreover, combining such inequalities through a union bound gives a result for the supremum of the absolute value.

We now state a general deviation bound on the uniform error of local regression map estimators with finite Vapnik-Chervonenkis (VC) dimension. The VC dimension is defined as 
\begin{align*}
  vc(\mathcal A)  &= \max \{ n\geq 1 \, :\, \mathbb S_\mathcal A(n) 
 = 2^n  \}. 
\end{align*}
As a consequence, the fact that all given $z_1,\cdots , z_{v+1} $ points cannot be shattered is equivalent to the fact that the VC dimension is smaller than $v$. The reason why the VC dimension is appropriate for controlling the complexity of classes of sets is perhaps explained by the Sauer's lemma (see \cite{lugosi2002pattern} for a proof) which states that
$\mathbb{S}_{\mathcal{A}}(n)  \leq  \sum_{i=0} ^{vc(\mathcal{A})} \binom{n}{i}.$ %  (en/vc(\mathcal A)) ^{vc(\mathcal A)}
  %Note that when $n\leq vc(\mathcal A)$ the bound $2^n \leq 2 ^{vc(\mathcal A)}$ is better than the above. 
  A consequence of Sauer's lemma is that 
  $\mathbb S_\mathcal A(n) \leq (n+1)^{vc(\mathcal A) }.$
%Hence the above bound for $n $ large enough improves upon the direct $2^n $ bound.
%Consequently, the number of functions within $ \ind _ {\mathcal A} (z) $ is of order $n^{vc(\mathcal A)}$. 

As established in \cite{wenocur1981some}, previous examples include the class of cells $(-\infty , t]\subset \mathbb R^d$, having VC dimension equal to $d$, or the class $( s , t]$, $s,t\in \RR^d$, of VC dimension equal to $2d$. In addition, the class of balls in $\mathbb R^d$ has dimension equal to $d+1$. 
\begin{definition}
 A local map $\mathcal V$ is said to be VC when there exists $\mathcal A$, a fixed VC collection of sets in $\mathbb R^d$, such that $\{\mathcal V (x) \, :\, x\in S_X\} \subset \mathcal A$.
\end{definition}

The next probability error bound is valid for local map estimators, with a general VC local map, that may for instance depend on the sample.

\begin{theorem}\label{th:general}
Let $n\geq 1$ and $\delta \in (0,1/2) $. Under \ref{cond:epsilon}, \ref{cond:D} and \ref{cond:reg4}, suppose that the local map is VC with dimension $v$. We have, with probability at least $1 - 2 \delta $, for all $x\in S_X$,
   % \begin{align*}
   %    &|  \hat g_{\mathcal V}(x) - g(x)| \\
   %    &\leq \sqrt{\frac{ 2 \sigma^2 \log\left( \frac{ (n+1) ^v }%{\delta} \right)}{ n \PP_n(\mathcal V (x))  } } + \sup_{y \in %\mathcal V (x)} |g(y) - g(x)|
    %\end{align*}
        \begin{align*}
       |  \hat g_{\mathcal V}(x) - g(x)| \leq \sqrt{\frac{ 2 \sigma^2} { n P_n^X(\mathcal V (x))  } \log\left( \frac{ (n+1) ^v }{\delta} \right) } + L(\mathcal V(x)  ) \diam  (\mathcal V(x) )  .
    \end{align*}
    where for any $A\in \mathcal B(S_X)$, $n P_n^X(A)  = \sum_{i=1} ^n \ind _A(X_i) $.
\end{theorem}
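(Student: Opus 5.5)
We start from the bias-variance decomposition displayed at the beginning of this section and bound the two terms separately, uniformly in $x\in S_X$.

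\emph{Bias term.} This part is deterministic. Fix $x\in S_X$. If $nP_n^X(\mathcal V(x))=0$, the convention $0/0=0$ makes the bias term vanish. Otherwise the bias term is a convex combination of the values $g(X_i)-g(x)$ over indices with $X_i\in\mathcal V(x)$. Since $x\in\mathcal V(x)$ by definition of a local map, each such $X_i$ satisfies $|g(X_i)-g(x)|\le L(\mathcal V(x))\|X_i-x\|_2\le L(\mathcal V(x))\diam(\mathcal V(x))$. The absolute value of the bias term is therefore at most $L(\mathcal V(x))\diam(\mathcal V(x))$ for every $x$, with no probabilistic input.

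\emph{Variance term.} We write it as
\[
\frac{1}{\sqrt{nP_n^X(\mathcal V(x))}}\cdot \frac{\sum_{i}\varepsilon_i\ind_{\mathcal V(x)}(X_i)}{\sqrt{\sum_j \ind_{\mathcal V(x)}(X_j)}},
\]
with the convention that it is $0$ when the cell is empty. Because the local map is VC, there is a fixed collection $\mathcal A$ with $vc(\mathcal A)=v$ containing all $\mathcal V(x)$. Theorem \ref{1} then gives an upper bound on the supremum over $x$ of the second factor, valid with probability at least $1-\delta$. Applying Theorem \ref{1} again with $\varepsilon_i$ replaced by $-\varepsilon_i$ gives the matching lower bound with probability at least $1-\delta$. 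A union bound yields, with probability at least $1-2\delta$ and simultaneously for all $x$,
\[
\Bigl|\textstyle\sum_i\varepsilon_i\ind_{\mathcal V(x)}(X_i)\Bigr| \le \sqrt{nP_n^X(\mathcal V(x))}\sqrt{2\sigma^2\log(\mathbb S_{\mathcal A}(n)/\delta)}.
\]
The consequence of Sauer's lemma, $\mathbb S_{\mathcal A}(n)\le (n+1)^v$, replaces the shattering coefficient by $(n+1)^v$. Dividing by $nP_n^X(\mathcal V(x))$ gives the first term of the claimed bound. Adding the bias bound through the triangle inequality concludes the proof.

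\emph{Main subtlety.} The local map may depend on the full sample, including the $Y_i$, so we cannot condition on the cells. The supremum in Theorem \ref{1} is taken over the deterministic class $\mathcal A$, which makes the bound valid regardless of how $\mathcal V$ depends on the data, but we must make sure Theorem \ref{1} is read that way. We also need to check that replacing $\varepsilon_i$ by $-\varepsilon_i$ preserves \ref{cond:epsilon}; it does, since the sub-Gaussian bound is symmetric in $\lambda$.
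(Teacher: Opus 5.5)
Your proposal is correct and follows the paper's proof essentially step for step: the bias term is bounded deterministically using the Lipschitz property and $x\in\mathcal V(x)$, and the variance term is bounded via Theorem~\ref{1} applied to $\pm\varepsilon_i$ over the fixed class $\mathcal A$, a union bound giving probability $1-2\delta$, and Sauer's bound $\mathbb S_{\mathcal A}(n)\le (n+1)^v$. One cosmetic point, which the paper also glosses over: when $\mathcal V(x)$ contains no sample point the decomposition itself fails, since then $\hat g_{\mathcal V}(x)-g(x)=-g(x)$ and not $0$; however, the first term on the right-hand side is then infinite, so the bound still holds trivially.
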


%Before pointing out to some related results, let us stress that convexity of $\mathcal V$ is often met in pratcice (e.g., trees and nearest neighbor) but can still be alleviated at the price of changing the $L_p(\mathcal V(x)  )$ constant into a slightly larger constant. Details are given in the Supplementary after the proof of Theorem \ref{th:general}. 
An alternative approach proposed in \cite{lugosinobel,nobel} as well as in \cite{devroye96probabilistic}, see Theorem 21.2 therein, follows from a uniform control over all resulting partitions, implying consistency results for sums over all partition elements. In Theorem \ref{th:general}, our approach is substantially different, since by considering the pointwise or sup-norm error,  the complexity term comes from the elements of the partition only. In addition, Theorem \ref{th:general} above might be compared with Theorem 6.1 in \cite{devroye96probabilistic}, which is suitable to either non-random or purely random (i.e., independent of the sample) data partitioning \citep{JMLR:v9:biau08a}. While Theorem \ref{th:general} is valid for data dependent partitions, we recover almost sure consistency by imposing two conditions that are similar to those required in Theorem 6.1 of \cite{devroye96probabilistic}, namely $\diam  (\mathcal V(x) )  \to 0 $ and $ nP_n^X( \mathcal V(x) ) / \log(n )  \to 0$\rev{, the former driving the bias to zero and the latter ensuring that enough points fall in the cell to control the variance}. Depending on whether the previous conditions hold uniformly in $x$ or for a given $x$, the consistency, uniform or pointwise, of the local map regression estimator can thus be obtained.

Moreover, the upper bound in Theorem \ref{th:general} can be optimized by balancing the diameter of the cell $\mathcal{V}(x)$ with the empirical measure $n P_n^X(\mathcal{V}(x))$. This involves a fundamental trade-off between a small diameter and a large number of points. The challenge lies in effectively combining $P_n(\mathcal{V})$ and $\text{diam}(\mathcal{V})$ to obtain -- or not -- optimal rates of convergence. Observe that for a sufficiently large $n$, the empirical measure $P_n^X$ is close to the measure $P^X$. Furthermore, under the assumption of a density bounded from below, the measure $P^X(\mathcal{V}(x))$ is akin to the volume of the cell. Consequently, minimizing the overall error requires a geometric control of the cell's diameter relative to its volume. This relationship is precisely what motivates introducing the concept of shape regularity.

\section{Shape regularity}\label{sectionSR}

In this section, we show that anisotropic localizing sets slow the convergence rate, at least for regression functions with sufficient local variability. We then introduce the concept of shape regularity to control the geometry of the localizing sets.

\subsection{Leading example}\label{leading_ex}
 Consider the function $g:x\mapsto \sum_{k=1} ^d x_k $ defined on $[0,1]^d$. Set $d\geq 1$ and assume that $X \sim \mathcal U [0,1]^d$. Consider estimating $g$ at $0$ using a rectangular cell such that $\diam(\mathcal V)^d/ \lambda (\mathcal V) \geq \bar {\gamma}$ where $\bar {\gamma} >0$. Since $g$ is Lipschitz - note that each partial derivative of $g$ is actually \textit{equal} to one pointwise, \rev{so that $g$ varies at unit rate in every coordinate direction, making this configuration the hardest one for an anisotropic cell} - optimal rates are of order $ n^{ - 1/(d+2)}$. Next we show that, under standard conditions, the optimal rate cannot be achieved when $\bar \gamma $ grows with $n$. This is important as it means that the optimal rate cannot be attained except when $\bar{\gamma}$ is bounded, meaning that trees must have a certain shape regularity for being optimal.

\begin{proposition}\label{contre_ex}
Let $n\geq 1$ and $d\geq 1$. Suppose that \ref{cond:D} is fulfilled with $X \sim \mathcal U [0,1]^d$. Let $x \mapsto g(x) = \sum_{k=1} ^d x_k$ and suppose that the noise $\varepsilon$ satisfies $\EE(\varepsilon|X)=0$ and $\EE(\varepsilon^2 | X) \geq \sigma^2_{\text{min}}$.  Consider a local map $\mathcal V$ such that $ \mathcal V(0) = \prod_k[0,h_k]$, for some deterministic side lengths $h_k$. Let $\bar{\gamma}$ be such that $\diam(\mathcal V(0))^d/ \lambda (\mathcal V(0)) \geq \bar{\gamma}$. Whenever $2^{d+4} \log(2) \leq  n \prod_{k=1}^d h_k$, there exists a constant $C_d > 0$ depending only on $d$ such that $$   \EE [ ( \hat g_{\mathcal V} (0 ) -  g(0 ))^2 ]^{1/2}  \geq  C_d \left( \dfrac{\bar{\gamma} \sigma^2_{\text{min}}}{n} \right)^{1/(d+2)}. $$
\end{proposition}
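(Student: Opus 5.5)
The plan is to lower bound the mean squared error at $0$ by a bias part plus a variance part, and then use the shape constraint $\diam(\mathcal V(0))^d \geq \bar\gamma\, \lambda(\mathcal V(0))$ to show that these two parts cannot both be small. Write $V=\mathcal V(0)=\prod_k[0,h_k]$. Since the map is deterministic, we may take $h_k\le 1$ (otherwise intersect with $[0,1]^d$), so $p:=P^X(V)=\prod_k h_k=\lambda(V)$. Let $N=\sum_i \ind_V(X_i)\sim \mathrm{Bin}(n,p)$. On $\{N=0\}$ we have $\hat g_{\mathcal V}(0)=0=g(0)$, so only $\{N\ge 1\}$ contributes.

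First step, conditioning on $X_1,\dots,X_n$. On $\{N\ge1\}$,
$$\hat g_{\mathcal V}(0)-g(0)=\frac1N\sum_{i:X_i\in V}\varepsilon_i+\frac1N\sum_{i:X_i\in V}g(X_i).$$
The cross term has zero conditional mean because $\EE(\varepsilon_i|X_i)=0$ and the pairs are independent by \ref{cond:D}. The conditional second moment of the noise part is $N^{-2}\sum_{i:X_i\in V}\EE(\varepsilon_i^2|X_i)\ge \sigma^2_{\min}/N$. For the bias part, conditionally on $N$ the points falling in $V$ are i.i.d.\ uniform on $V$. Jensen's inequality then gives $\EE[(N^{-1}\sum g(X_i))^2\mid N]\ge (\EE[g(X)\mid X\in V])^2=(\sum_k h_k/2)^2$. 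Since $g\ge 0$ on $V$, we also have $\sum_k h_k\ge (\sum_k h_k^2)^{1/2}=\diam(V)=:D$. Hence
$$\EE[(\hat g_{\mathcal V}(0)-g(0))^2]\ \ge\ \PP(N\ge1)\frac{D^2}{4}+\sigma^2_{\min}\,\EE\Big[\frac{\ind_{N\ge1}}{N}\Big].$$

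Second step, controlling $N$. We have $\PP(N=0)=(1-p)^n\le e^{-np}\le 1/2$ under the hypothesis $np\ge 2^{d+4}\log 2$ (which is far more than needed). By Jensen applied to the conditional law given $N\ge1$,
$$\EE[\ind_{N\ge1}/N]\ \ge\ \PP(N\ge1)^2/\EE[N]\ \ge\ \frac{1}{4np}.$$
Finally, the shape assumption gives $p=\lambda(V)\le D^d/\bar\gamma$.

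Third step, optimization. Combining the previous steps,
$$\mathrm{MSE}\ \ge\ \frac{D^2}{8}+\frac{\bar\gamma\,\sigma^2_{\min}}{4nD^d}\ \ge\ \inf_{t>0}\Big(\frac{t^2}{8}+\frac{\bar\gamma\sigma^2_{\min}}{4nt^d}\Big)=c_d\Big(\frac{\bar\gamma\sigma^2_{\min}}{n}\Big)^{2/(d+2)}.$$
The infimum is computed by balancing the two terms at $t\asymp(\bar\gamma\sigma^2_{\min}/n)^{1/(d+2)}$. Taking square roots yields the claim with $C_d=c_d^{1/2}$.

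The only delicate points are the conditioning arguments: justifying that the cross term vanishes under heteroscedastic noise, and that the points in $V$ are uniform on $V$ given $N$ so that Jensen applies. Everything else is routine. The main obstacle I foresee is making sure the bias lower bound uses the mean of $g$ over the cell rather than a pointwise bound, because that is what links the bias to $D$ via $\sum_k h_k\ge D$.
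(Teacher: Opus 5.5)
Your proof is correct, and it differs from the paper's argument in the bias step.

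\textbf{What is the same.} The paper also conditions on $X_1,\dots,X_n$ and kills the cross term. It bounds the noise part by $\sigma^2_{\min}\,\EE[\ind_E/N]\ge \sigma^2_{\min}\PP(E)^2/(n\lambda(V_0))$ via Jensen, exactly as you do. It then optimizes a function of the form $a_1x^{-d}+a_2x^2$, in the variable $x=\lambda(V_0)^{1/d}$ rather than your $D$, which makes no difference.

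\textbf{How the bias step differs.} The paper does not use the conditional mean of $g$ over the cell. Instead it:
\begin{itemize}
\item introduces the sub-cell $V_1=\prod_k[h_k/2,h_k]$, on which $g\ge \frac12\sum_k h_k$ pointwise;
\item writes $B\ge \frac12\diam_1(V_0)\,a_1/a_0$ with $a_j=\sum_i\ind_{V_j}(X_i)$;
\item uses multiplicative Chernoff bounds (Theorem~\ref{lemma=chernoff}) to get $a_1\ge (2^{-d}/3)\,a_0>0$ with probability at least $1/4$.
\end{itemize}
The hypothesis $n\prod_k h_k\ge 2^{d+4}\log 2$ is calibrated precisely for that concentration step.

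You replace all of this with one observation. Given the indices falling in $V$, those points are i.i.d.\ uniform on $V$. Hence on $\{N\ge1\}$,
\[
\EE[B^2\mid N]\ge(\EE[g(X)\mid X\in V])^2=\Bigl(\textstyle\sum_k h_k/2\Bigr)^2 .
\]

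\textbf{What your route buys.}
\begin{itemize}
\item No concentration inequality is needed.
\item The sample-size condition is used only to get $\PP(N\ge1)\ge 1/2$, so $n\prod_k h_k\ge\log 2$ would suffice.
\item Your constant avoids the factor $(3\cdot 2^d)^{-d/(d+2)}$ in the paper's $C_d$, which decays roughly like $2^{-d}$.
\end{itemize}
The paper's sub-cell argument rests only on a pointwise lower bound for $g$ on a region of proportional mass. Your mean-based argument extends just as easily to the variants mentioned after the statement. For densities in $[b,M]$ one gets $\EE[X_k\mid X\in V]\ge (b/M)\,h_k/2$. The case $g(x)-g(0)\ge\sum_k x_k$ on the cell also goes through unchanged.

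\textbf{One small correction.} Your parenthetical ``otherwise intersect with $[0,1]^d$'' would not be a valid reduction. Intersecting changes both $\diam$ and $\lambda$, and hence the hypothesis involving $\bar\gamma$. It is simply unnecessary: a local map takes values in $\mathcal B(S_X)$, so $\mathcal V(0)\subset[0,1]^d$ and $h_k\le 1$ automatically.
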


%For instance, in the case $d=2$, if $h_1 = n^{-1/2}$ and $h_2 = n^{-1/6}$, then $n h_1 h_2 = n^{1/3}$. For all $n \geq \lfloor (2^6 \log(2))^3 \rfloor +1 = 7152$, the assumption of the preceding proposition is satisfied. Furthermore, our shape regularity factor $\bar{\gamma_n}$ diverges, indeed,$$\diam(\mathcal{V})^2/\lambda(\mathcal{V}) \geq h_2^2/(h_1 h_2) = h_2/h_1 = n^{1/3} = \bar{\gamma}_n\displaystyle \xrightarrow[n \to + \infty]{} +\infty.$$ Consequently, the minimax rate is not achieved.

More generally, the latter result still holds if $g(x) - g(0) \geq \sum_{k=1}^d x_k$ on $\mathcal V (0)$. An example of such function is, for instance, $g$ differentiable, $\nabla g (0)= (1,\dots,1)^T$ and $g$ convex. But many non-convex functions satisfy this condition, of course. Note also that the previous result can be extended to covariates $X$ having a density uniformly bounded from above and from below. Finally, by an easy conditioning argument, Proposition \ref{contre_ex}
still holds for side lengths $h_k$ that are independent of the sample, if $\bar \gamma$ is still deterministic. We stress that for most random trees, the randomness of the construction will actually require to consider a random shape parameter $\gamma$, and to study its stochastic variability.

\subsection{Minimal mass assumption}\label{4.1}

The next minimal mass assumption allows us to obtain an estimate for $P_n^X(\mathcal V(x)) $, which appears in the upper bound stated in Theorem \ref{th:general}.

\begin{enumerate}[label=(X), wide=0.5em,  leftmargin=*]
\item  \label{cond:density_X} 
%The distribution of the covariates vector $X$ admits a density function $f_X$ such that, f
For the local map $\mathcal V$ on $S_X$, there exists a function $\ell : x \mapsto \ell(x)> 0$ such that, almost surely, for all $x\in S_X$, 
$$ P^X(\mathcal V(x) )  \geq  \ell(x) \lambda (\mathcal V(x)  )  ,$$
where $\lambda$ stands for Lebesgue measure on $\mathbb R^d$.
\newcounter{nameOfYourChoice}
\setcounter{nameOfYourChoice}{\value{enumi}}
\end{enumerate}

Note that assumption \ref{cond:density_X} is easily satisfied when $X$ has a density $f_X$ bounded from below by a constant $b > 0$, by choosing $\ell(x) = b$ (see Section \ref{s_appli} for more precise example).
Moreover, note that the minimal mass assumption \ref{cond:density_X} is defined with respect to a specific local map $\mathcal{V}$ that we do not recall explicitely, and that will always refer in the following to the natural local map associated to the considered estimator. 

The minimal mass assumption is quite flexible, as it can be verified for the local maps arising from tree constructions. Indeed the  minimal mass assumption can be obtained by checking a more restrictive version involving some particular class of sets such as hyper-rectangles. We refer to Section \ref{s53} for more details.  %In the first case, $\mathcal V (x)$ is a (small enough) ball with positive (random) radius. As shown in \cite{jiang2019non}, such assumption is satisifed by bounded from below density on smooth sets $S_X$, as well as Gaussian or Laplace variables on $\mathbb R^d$ (\cite{gadat2016classification}). Some further details will be given in Section \ref{s61}. In the second example $\mathcal V (x) $ is an hyper-rectangle included in $S_X= [0,1]^d$ and therefore bounded from below densities on $S_X$ easily satisfy \ref{cond:density_X}.   

The following definition ensures that each element of the local map contains enough points. 

\begin{definition}
A VC local map  $x\mapsto  \mathcal V(x)$ with dimension $v>0$ is called $(\delta, n)$-large whenever, for all $x\in S_X$, almost surely,
    \begin{align*}
  n \max \left\{ P_n^X (\mathcal V(x))  ,     P^X(\mathcal V(x)) \right\}  \geq 36 \log\left(\dfrac{4 (2n+1) ^v }{\delta} \right).
\end{align*}
\end{definition}

Note that the latter inequality is easy to check in practice, as it suffices to make sure that enough data points are in each element of the local map.

\begin{theorem}
    \label{th2:general}
   Let $n\geq 1$ and $\delta \in (0,1/3) $.  Under \ref{cond:epsilon}, \ref{cond:D}, \ref{cond:reg4} and \ref{cond:density_X}, suppose that the local map is VC with dimension $v$ and is $(\delta, n)$-large, then we have with probability at least $1-3\delta$, for all $x\in S_X$,
   % \begin{align*}
   %    &|  \hat g_{\mathcal V}(x) - g(x)| \\
   %    &\leq \sqrt{\frac{ 2 \sigma^2 \log\left( \frac{ (n+1) ^v }%{\delta} \right)}{ n \PP_n(\mathcal V (x))  } } + \sup_{y \in %\mathcal V (x)} |g(y) - g(x)|
    %\end{align*}
\begin{align*}
        | \hat g_{\mathcal V}(x) - g(x)| \leq  \sqrt{\frac{ 3 \sigma^2}{  n  \ell(x) \lambda ( \mathcal V (x) )   }  \log\left( \frac{(n+1)^v}{\delta} \right)} + L(\mathcal V(x) ) \diam (\mathcal V(x) ).
    \end{align*}
\end{theorem}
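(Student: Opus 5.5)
The proof starts from Theorem~\ref{th:general}. On an event of probability at least $1-2\delta$ it gives, for all $x\in S_X$,
\[
|\hat g_{\mathcal V}(x)-g(x)|\le \sqrt{\frac{2\sigma^2}{nP_n^X(\mathcal V(x))}\log\Big(\frac{(n+1)^v}{\delta}\Big)}+L(\mathcal V(x))\diam(\mathcal V(x)).
\]
The bias term is already in its final form. It therefore remains to show that, on an additional event of probability at least $1-\delta$, we have uniformly in $x$
\[
P_n^X(\mathcal V(x))\ge \tfrac{2}{3}P^X(\mathcal V(x)).
\]
Combining this with \ref{cond:density_X}, $P^X(\mathcal V(x))\ge \ell(x)\lambda(\mathcal V(x))$, turns $2\sigma^2/(nP_n^X)$ into at most $3\sigma^2/(n\ell(x)\lambda(\mathcal V(x)))$. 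A union bound then gives the claimed probability $1-3\delta$.

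The key step is a uniform relative (ratio-type) VC deviation inequality over the deterministic class $\mathcal A$ containing all $\mathcal V(x)$. Since $\mathcal V$ may depend on the sample, the bound must be uniform over $A\in\mathcal A$. I would invoke the classical relative deviation inequality of Vapnik--Chervonenkis / Anthony--Shawe-Taylor type, proved by symmetrization with a ghost sample (which explains the $(2n+1)^v$, i.e.\ the shattering coefficient on $2n$ points, and the factor $4$). It gives, with probability at least $1-\delta$, for all $A\in\mathcal A$ simultaneously,
\[
P^X(A)-P_n^X(A)\le 2\sqrt{P^X(A)\,\frac{\log\big(4\,\mathbb S_{\mathcal A}(2n)/\delta\big)}{n}},
\]
together with the symmetric version with the roles of $P$ and $P_n$ exchanged. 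Write $\epsilon=\log(4(2n+1)^v/\delta)/n$. Because the map is $(\delta,n)$-large, either $P^X(A)\ge 36\epsilon$ or $P_n^X(A)\ge 36\epsilon$ for $A=\mathcal V(x)$.

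I would then split into two cases. If $P^X(A)\ge 36\epsilon$, the first inequality gives $P_n^X(A)\ge P^X(A)-2\sqrt{P^X(A)\epsilon}\ge P^X(A)(1-2/6)=\tfrac{2}{3}P^X(A)$. If instead $P_n^X(A)\ge 36\epsilon$, I would use the reversed inequality $P_n^X(A)-P^X(A)\le 2\sqrt{P_n^X(A)\epsilon}$, which shows $P^X(A)$ is comparable to $P_n^X(A)$. Then either $P^X(A)\ge 36\epsilon$ (back to case one) or $P^X(A)<36\epsilon\le P_n^X(A)$, in which case $P_n^X(A)\ge P^X(A)\ge\tfrac{2}{3}P^X(A)$ trivially.

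The main obstacle is stating the relative deviation inequality with exactly the constants appearing in the largeness condition ($36$, $4$, $(2n+1)^v$), including the version in which $P_n$ plays the role of the normalizer. The two one-sided bounds must also be placed in a single event of probability $1-\delta$; otherwise the total probability would become $1-4\delta$. Failing that, one can absorb the extra union-bound factor into the constant $4$ inside the logarithm. The constant $3$ in the conclusion comes precisely from the factor $2/3$: $2\sigma^2\cdot\tfrac32=3\sigma^2$. Finally, $P_n^X(\mathcal V(x))>0$ on this event, so the $0/0$ convention plays no role.
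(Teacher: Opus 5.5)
Your proof is correct and matches the paper's: combine Theorem~\ref{th:general} with the first (one-sided) statement of Theorem~\ref{th:vapnik_normalized}, which gives $P_n^X(\mathcal V(x))\ge \tfrac23 P^X(\mathcal V(x))$ whenever $nP^X(\mathcal V(x))\ge 36\log(4(2n+1)^v/\delta)$, and use $P_n^X(\mathcal V(x))\ge P^X(\mathcal V(x))$ trivially otherwise. As you yourself end up observing, the reversed inequality is never needed, so your worry about fitting two one-sided bounds into a single event (and ending up with $1-4\delta$) does not arise.
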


The previous result differs from the one of Theorem \ref{th:general}, as the bound no longer depends on the number of data points in the associated local set, but instead on its Lebesgue volume. Together with the diameter, these two quantities will appear in the definition of the $\gamma$-shape regularity, so as to minimize the latter upper bound and therefore, to attain optimal rates of convergence for the underlying regression problem.

 As established in Theorem \ref{th2:general}, under the minimal mass assumption, the quantity $ |\hat g_{\mathcal V}(x) - g (x) |$ is bounded by
 $  \sqrt { 1 / ( n \lambda (\mathcal V (x)) )    }   +   \diam (\mathcal V (x))  $, up to constants and log terms. Theorem \ref{th2:general} thus allows us to understand that a trade-off between the volume and the diameter must be achieved to reach optimal rates.
In this regard, first note that the volume cannot be greater than the diameter to the power $d$, as we always have $ \lambda (\mathcal V(x) )  \leq \diam ( \mathcal V(x) )^d $. Incorporating that constraint when optimizing the previous bound leads to $\lambda (\mathcal V(x) ) = \diam ( \mathcal V(x) )^d = n^{-d/(d+2)}$, which is the optimal rate in our regression problem. In contrast, if $ \diam ( \mathcal V(x) )^d = \gamma_n \lambda (\mathcal V(x) )   $ with $\gamma_n \to \infty$, then the bound of Theorem \ref{th2:general} gives a slower, suboptimal convergence rate. This reasoning motivates the introduction of the shape-regularity condition, in the next section.

\subsection{Isotropic requirements through shape regularity}\label{sec_SR}
%A key concept is now introduced, which will help characterize the convergence rates of the local map regression estimators. As established in Theorem \ref{th2:general}, under the minimal mass assumption, the quantity $ |\hat g_{\mathcal V}(x) - g (x) |$ is bounded by
% $  \sqrt { 1 / ( n \lambda (\mathcal V (x)) )    }   +   \diam (\mathcal V (x))  $, up to constants and log terms. Theorem \ref{th2:general} thus allows us to understand that a trade-off between the volume and the diameter must be achieved to reach optimal rates.
 %or the well known AM-GM inequality $\lambda (\mathcal V(x) )^{1/d}  \leq (1/d) \diam_1 ( \mathcal V(x) )  $). 
%Incorporating that constraint when optimizing the previous bound 
%with respect  to $ \lambda (\mathcal V(x) )$ and $\diam ( \mathcal V(x) )$  under $constraint that $\lambda (\mathcal V(x) )  \leq \diam ( \mathcal V(x) )^d $
%leads to $\lambda (\mathcal V(x) ) = \diam ( \mathcal V(x) )^d = n^{-d/(d+2)}$, which is the optimal rate in our regression problem. 

We have shown that if $\diam(\mathcal{V}(x))^d \geq \bar{\gamma}_n\lambda(\mathcal{V}(x))$ with $\bar{\gamma}_n\to \infty$ for a given cell $\mathcal{V}(x)$, then the bound provided by Proposition \ref{contre_ex} yields a slower, suboptimal convergence rate. This reasoning motivates the introduction of the following notion of shape-regularity.

%The diameter derived from the $\ell_2$-norm will be denoted simply as the diameter.

\begin{definition}\label{def:gamma_regular}
For $\gamma>0 $, a set $ V $ is called $\gamma$-shape-regular ($\gamma$-SR) if
$\diam(V)^d \leq \gamma \lambda (V)$.
\end{definition}

\noindent The previous condition can be interpreted as a volume condition: the volume of $V$ should be of the same order as the volume of the smallest ball containing $V$. In this regard, first recall, as already noted in the previous section, that the volume cannot be greater than the diameter to the power $d$, as we always have $ \lambda (\mathcal V(x) ) \leq \diam ( \mathcal V(x) )^d$. Roughly speaking, the shape of $V$ is not that different from that of a ball or a hypercube, i.e. $V$ is ``almost isotropic''. Moreover, it does not depend on the covariates density, making it easy to check in practice. 
 
We provide now an alternative to Definition \ref{def:gamma_regular}, specifically designed for local maps valued in the set of hyper-rectangles. %Its advantage is that it can be easily imposed during tree construction. 
For any hyper-rectangle $A\subset  S_X$, let $h_-(A)$ and $h_+(A)$ denote the smallest and largest side length, respectively.
\begin{definition}\label{def:beta_regular}
For $\beta>0 $, a hyper-rectangle $ A$ is called $\beta$-shape-regular ($\beta$-SR) if 
$ h_+ (A)  \leq \beta h_-(A)$.
\end{definition}
\noindent It is easily seen that when a set $V$ is an hyper-rectangle, the $\gamma$-SR property is related to $\beta$-SR. This is the subject of the following proposition. 
\begin{proposition}\label{link_beta_gamma}
     A $\beta$-SR hyper-rectangle  is $\gamma$-SR with $\gamma = \beta^d \, d^{d/2}$. Conversely, a $\gamma$-SR hyper-rectangle  is $\beta$-SR with $\beta= \gamma$.
\end{proposition}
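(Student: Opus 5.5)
Write the hyper-rectangle as $A=\prod_{k=1}^d [a_k,a_k+h_k]$ (open or half-open sides change neither the diameter nor the volume). Both implications rest on two elementary identities:
$$\diam(A)^2=\sum_{k=1}^d h_k^2, \qquad \lambda(A)=\prod_{k=1}^d h_k.$$
Since $h_-(A)\le h_k\le h_+(A)$ for every $k$, these give the sandwich bounds
$$h_+(A)\le \diam(A)\le \sqrt d\, h_+(A), \qquad h_-(A)^d\le \lambda(A)\le h_+(A)^{d-1}h_-(A).$$
Each direction of the proposition then follows by combining the appropriate halves of these bounds.

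\emph{$\beta$-SR implies $\gamma$-SR.} Suppose $h_+(A)\le\beta h_-(A)$. Then
$$\diam(A)^d\le d^{d/2}h_+(A)^d\le d^{d/2}\beta^d h_-(A)^d\le d^{d/2}\beta^d\lambda(A).$$
So $A$ is $\gamma$-SR with $\gamma=\beta^d d^{d/2}$.

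\emph{$\gamma$-SR implies $\beta$-SR.} Suppose $\diam(A)^d\le\gamma\lambda(A)$. Using $\diam(A)\ge h_+(A)$ and $\lambda(A)\le h_+(A)^{d-1}h_-(A)$, we get
$$h_+(A)^d\le \gamma\, h_+(A)^{d-1}h_-(A).$$
Dividing by $h_+(A)^{d-1}>0$ gives $h_+(A)\le\gamma h_-(A)$, which is $\beta$-SR with $\beta=\gamma$.

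There is no real obstacle. The only point needing care is the degenerate case where some side length is zero. Then $\lambda(A)=0$, so $\gamma$-SR forces $\diam(A)=0$ and every side is zero. In that case $h_+(A)=h_-(A)=0$ and the conclusion holds trivially. Similarly, in the first direction $h_-(A)=0$ forces $h_+(A)=0$. The division by $h_+(A)^{d-1}$ is therefore only performed when $h_+(A)>0$. It is also worth remarking that the exponent in the second direction could be improved (e.g. $\beta=\gamma$ is crude when $d\ge 2$), but the statement only asks for $\beta=\gamma$.
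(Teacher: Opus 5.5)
Your proof is correct and follows the paper's own argument. For the first implication you use $\diam(A)\le\sqrt d\,h_+(A)$ and $\lambda(A)\ge h_-(A)^d$. For the converse you use $\diam(A)\ge h_+(A)$ and $\lambda(A)\le h_+(A)^{d-1}h_-(A)$, which the paper writes in terms of $\diam(A)/\lambda(A)^{1/d}$. Your handling of zero-volume boxes is a small extra that the paper skips.

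One correction to your closing aside: the linear dependence $\beta=\gamma$ cannot be improved in its exponent. The box with one side $h_-$ and $d-1$ sides $\beta h_-$ satisfies $\diam^d\le d^{d/2}\beta\,\lambda$. For $d=2$ one has exactly $\diam(A)^2/\lambda(A)=\beta+1/\beta$, so $\beta=\gamma$ is essentially sharp there. Only the constant can be improved, and only when $d\ge 3$.
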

\noindent The two definitions of shape regularity, $\gamma$ and $\beta$, are therefore equivalent in the case of hyper-rectangles. More precisely, the first implication in Proposition \ref{link_beta_gamma} will be of particular interest for us, as it will allow us to show that some regression trees are $\gamma$-shape-regular. In practice, one way to obtain a $\beta$-SR (and therefore $\gamma$-SR) tree is to allow only for $\beta$-SR splits when growing the tree, i.e., valid splits in light of Definition \ref{def:beta_regular}. This is easily imposed, as it only requires one to restrict the  optimization domain when finding the optimal split. We further develop this aspect in Section \ref{s53}.

Note that, in dimension $d=1$, trees are necessarily shape-regular for $\gamma=\beta=1$ as $ h_- = h_+$. From this perspective, dimension $1$ plays a special role and might exhibit convergence properties that would not generalize to larger dimensions.

%\subsection{Minimal mass assumption and shape regularity of local maps}\label{4.1}

Let us now introduce the following definition, which requires that all elements of the local map are $\gamma $-SR. 

\begin{definition}
A local map $x\mapsto  \mathcal V (x)$ is $\gamma$-SR  if all elements in \hbox{$\{\mathcal V (x) : x\in S_X\}$} are $\gamma$-SR. 
\end{definition}

To validate the $\gamma$-SR condition, we now provide some error rates for such $\gamma$-SR local maps, when choosing a suitable value for the volume. In the next statement, we use the notation
$ f \lesssim g $ when there exists a universal constant $ a >0$ such that 
$   f  \leq a  g .$
We write $ f \asymp g $ whenever $f\lesssim g $ and $g\lesssim f$, 

\begin{theorem}\label{main_result_localizing_map}   
Under the assumptions of Theorem \ref{th2:general}, if the local map is $\gamma$-SR and if for all $x\in S_X$,
$\lambda(\mathcal V(x) ) \asymp ({\log ( {(n+1)^v} / { \delta} ) } / { n} )  ^{d/(d+2)}$, we have, with probability at least $1 - 3\delta $, for all $x\in S_X$,
    \begin{align*}
        & | \hat g_{\mathcal V}(x) - g(x)| \lesssim c(x) \left(\frac{1 }{ n}\log\left( \frac{(n+1)^v}{ \delta} \right) \right) ^{1/(d+2)}
    \end{align*}
     where $c(x) = \sqrt{{ \, \sigma^2}/\ell(x)} +  L(\mathcal V(x) )  \gamma^{1/d}.$ 
 %In addition, whenever $S_X$ is bounded and $ \ell(x) \geq b >0 $ for all $x\in S_X$, we have, with probability at least $1 - 3\delta $,
 % \begin{align*}
 %        & \sup_{x\in S_X} | \hat g_{\mathcal V}(x) - g(x)| \lesssim c \left(\frac{\log\left( \frac{(n+1)^v}{ \delta} \right) }{ n} \right) ^{1/(d+2)}
 %    \end{align*}
 %    where $c = \sqrt{{3 \, \sigma^2}/b} +  L  \gamma^{1/d}.$
    \end{theorem}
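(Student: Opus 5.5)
The plan is to apply Theorem \ref{th2:general} directly and then bound the two terms of its right-hand side using the volume prescription together with $\gamma$-shape regularity. All hypotheses of Theorem \ref{th2:general} are assumed: \ref{cond:epsilon}, \ref{cond:D}, \ref{cond:reg4}, \ref{cond:density_X}, a VC local map of dimension $v$, and the $(\delta,n)$-large property. So there is an event of probability at least $1-3\delta$ on which, simultaneously for all $x\in S_X$,
\[
| \hat g_{\mathcal V}(x) - g(x)| \leq  \sqrt{\frac{ 3 \sigma^2}{  n  \ell(x) \lambda ( \mathcal V (x) )   }  \log\left( \frac{(n+1)^v}{\delta} \right)} + L(\mathcal V(x) ) \diam (\mathcal V(x) ).
\]
Everything after this is deterministic on that event. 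Write $r_n := \log((n+1)^v/\delta)/n$. By assumption, $a_1 r_n^{d/(d+2)} \le \lambda(\mathcal V(x)) \le a_2 r_n^{d/(d+2)}$ for universal constants $a_1,a_2$ and all $x$.

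\textbf{Variance term.} Use the lower bound on the volume:
\[
\frac{3\sigma^2 \log((n+1)^v/\delta)}{n\ell(x)\lambda(\mathcal V(x))} \le \frac{3\sigma^2}{a_1\ell(x)}\, r_n\, r_n^{-d/(d+2)} = \frac{3}{a_1}\frac{\sigma^2}{\ell(x)}\, r_n^{2/(d+2)}.
\]
Taking square roots, this term is $\lesssim \sqrt{\sigma^2/\ell(x)}\; r_n^{1/(d+2)}$.

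\textbf{Bias term.} Shape regularity gives $\diam(\mathcal V(x))^d \le \gamma\lambda(\mathcal V(x))$. Combined with the upper bound on the volume,
\[
\diam(\mathcal V(x)) \le \gamma^{1/d}\lambda(\mathcal V(x))^{1/d} \le a_2^{1/d}\gamma^{1/d} r_n^{1/(d+2)}.
\]
So this term is $\lesssim L(\mathcal V(x))\gamma^{1/d} r_n^{1/(d+2)}$. Summing the two bounds yields $c(x)\, r_n^{1/(d+2)}$ with $c(x)=\sqrt{\sigma^2/\ell(x)}+L(\mathcal V(x))\gamma^{1/d}$, which is the claim, valid uniformly in $x$ on the same event.

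\textbf{Main obstacle.} The only delicate point is bookkeeping of the constants in $\lesssim$. The factor $a_2^{1/d}$ depends on $d$ unless one uses $a_2\ge 1$ or $a_2\le 1$ conventions, and $\sqrt{3/a_1}$ enters as well. I would state that $a_1,a_2$ come from the $\asymp$ hypothesis and that $\max(\sqrt{3/a_1},a_2^{1/d})\le \max(\sqrt{3/a_1},\max(a_2,1))$ is universal, so the final constant is universal. I would also note that the $(\delta,n)$-large requirement is compatible with the prescribed volume for $n$ large, but this is a hypothesis rather than something to prove.
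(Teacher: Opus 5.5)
Your proof is correct and follows the paper's argument: apply Theorem~\ref{th2:general}, use $\gamma$-shape regularity to get $\diam(\mathcal V(x)) \le \gamma^{1/d}\lambda(\mathcal V(x))^{1/d}$, and use the two-sided volume prescription to turn both terms into multiples of $\left(\frac{1}{n}\log\left(\frac{(n+1)^v}{\delta}\right)\right)^{1/(d+2)}$. The only difference is cosmetic. You bound each term separately, the variance term with the lower volume bound and the bias term with the upper one, which gives slightly cleaner constants. The paper instead first bounds both terms by a multiple of $\lambda(\mathcal V(x))^{1/d}$ and then converts, ending up with the cruder universal constant $a_+^{3/2}/a_-$.
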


% We prove in Theorem \ref{main_result_localizing_map} some probability bounds for the pointwise and sup-norm errors. 
Note that our pointwise probability bound is valid \textit{for all} $x$ in the domain $S_X$, but with a pre-factor $c(x)$, introduced in the minimal mass assumption \ref{cond:density_X}. When this pre-factor can be bounded from below uniformly in $x$ (see the examples of Section \ref{s_appli}), the above is in turn a uniform error bound.

The requirement about the order of the volume $\lambda(\mathcal V(x)) $ in Theorem \ref{main_result_localizing_map} allows to minimize the bound in Theorem \ref{th2:general}. In most practical situations, this precise parameter cannot be directly tuned and one is only able to select another hyper-parameter that will in turn impact the value of $\lambda(\mathcal V(x))$, as observed in the examples of the next section. However, it serves to illustrate the potential rate of convergence achievable under our assumptions.

Note also that our choice for the order of the volume $\lambda(\mathcal V(x))$ depends on the confidence level $\delta$, thus making the estimator $\delta$-dependent. An alternative choice, such as $\lambda(\mathcal V(x)) \asymp (\log(n) / n)^{d/(d+2)}$, has the advantage of being independent of $\delta$.
Such a choice allows us to extend our result to pointwise and uniform convergence rates in expectation of order $(\log(n)/n)^{1/(d+2)}$, which corresponds to the minimax rate in expectation for the sup-norm error (see, for instance, \citep{tsybakov2009}). 
%In comparison, the minimax error rate in expectation is of order $n^{-1/(d+2)}$ for pointwise estimation. %Thus, on the one hand, we only recover the minimax rates in expectation up to some extra-log factors, but on the other hand, our bounds are more informative than criteria in mean, since we describe the behavior of the whole tails of the errors.
%where the the constants $c$ may differ from the respective ones of Theorem \ref{main_result_localizing_map}, due to the integration step. As shown for instance in \cite{tsybakov2009}, the above sup-norm rate is minimax optimal for the estimation of Lipschitz functions, whereas the pointwise rate actually contains an extra-log factor. This factor is inherent to our VC-type approach for deriving deviation bounds, and the question whether it could be removed in general for $\gamma$-SR local maps remains open.
%It is worth noticing that if the density $f_X$ of $X$ is uniformly bounded from below over $S_X$ and if $g$ is $L-$Lipschitz on $S_X$, then Theorem \ref{main_result_localizing_map} gives a convergence rate that actually holds in sup-norm on $S_X$. %We also point out that this result is more theoretical than practical as setting $\lambda(\mathcal V(x) ) $ in such a way might not be possible in practice.

\section{Applications}\label{s_appli}

In what follows, the pivotal role of shape regularity is illustrated through several examples. First, the classic $k$-NN regression method is studied. Second, we examine the shape regularity of random split trees and identify a specific subclass that exhibits suboptimal convergence rates. Finally, we discuss the application of our results to the CART algorithm and also propose an amended version that incorporates shape regularity constraints.

\subsection{Revisiting the nearest neighbor method}\label{sec_k_nn}

%Optimal convergence of shape-regular local map estimators}\label{sec_SR_opt}

%The geometric analysis of Wager trees reveals inherent difficulties in achieving shape ragularity and therefore optimal rates (see Sections \ref{wager_sousopt} and \ref{leading_ex}). %In this section, we transition from the analysis of geometric drift to the study of shape-regular partitions. 
%In this section, we demonstrate that shape regularity is not merely a desirable property but a sufficient condition for statistical optimality. To conclude this section, we apply our result to the case of $k$-nearest neighbors. This provides a natural illustration of our theoretical framework, since $k$-nearest neighbors algorithms are inherently based on shape-regular local maps.

%\subsubsection{An application to $k$-nearest neighbors}
In this section, we apply our result to the case of $k$-nearest neighbors. This provides a natural illustration of our theoretical framework, since $k$-nearest neighbors algorithms are inherently based on shape-regular local maps. This is an application of Theorem \ref{th:general} applied to $k$-NN, where the local map $\mathcal{V}$ is a ball. Indeed, in Theorem \ref{th:general}, the variance term features $P_n^X(\mathcal{V}(x))$, which is exactly $k/n$. Furthermore, observe that the nearest neighbors algorithm is based on a shape-regular local map, since we have the relationship $$\diam(\mathcal{V}(x))^d = (2 \hat \tau _{n,k}(x))^d = \dfrac{2^d}{V_d} \lambda(\mathcal{V}(x))$$
where $V_d$ denotes the volume of the unit ball in $\mathbb{R}^d$ and $\lambda(\mathcal{V}(x))$ is the Lebesgue measure of the cell. This relation highlights that for $k$-NN, the diameter is intrinsically tied to the volume, ensuring a perfect shape regularity. Consequently, this fits within the arguments developed in Section \ref{4.1} regarding the shape regularity of local maps.

More precisely, nearest neighbors regression estimators are local maps estimators for which $\mathcal V (x) = B (x, \hat \tau _{n,k}(x) ) $ where $ \hat \tau _{n,k}(x)$ has been defined in Section \ref{s2}, Example \ref{ex3}. In contrast with the general approach developed in the previous section, which relies on Assumption \ref{cond:density_X}, we here no longer consider the (possibly random) local map $\mathcal V$ but rather focus on a given class of balls (with small enough radius).

\begin{enumerate}[label=(XNN), wide=0.5em,  leftmargin=*]
\item  \label{cond:density_XNN} %The covariates vector $X$ admit a positive density function $f_X$ on $S_X$ and 
There is a positive function $\ell $ defined on $S_X$ and $T_0>0$ such that, for all $x\in S_X$ and $\tau \in (0, T_0)$,
$$ P^X( B(x, \tau) )  \geq   \ell(x)\tau ^d.$$
\end{enumerate}

  As we will see below, Assumption \ref{cond:density_XNN} is sufficient when dealing with nearest neighbors regression estimators. Moreover, Assumption \ref{cond:density_XNN} is satisfied whenever $X$ has a density $f_X$ which is bounded below by a constant $b> 0$ on $S_X$ (in which case $S_X$ must be bounded) and when $S_X$ satisfies $ \int _{S_X \cap B(x,\tau )} d\lambda \geq \kappa_0 \int _{B(x,\tau )} d\lambda$, % \geq \kappa_0 \tau ^d V_d$, where $V_d = \lambda (B(0,1)) $, 
  for all $\tau \in (0, T_0)$. Assumption \ref{cond:density_XNN} can also be satisfied when $S_X$ is unbounded. Several examples are given in  \cite{gadat2016classification}. 

Following an approach quite similar to the proof of Theorem \ref{th:general}, we obtain the following result.

\begin{theorem}\label{th:NN}
Let $\delta \in (0,1/3) $, $n\geq 1$, $d\geq 1$ and $k\geq  8    \log( 4 (2n+1) ^{(d+1) } / \delta ) $.  Let $\mathcal V $ be obtained from nearest neighbors algorithm, as detailed in Example \ref{ex3}. Suppose that \ref{cond:epsilon}, \ref{cond:D}, \ref{cond:reg4} and  \ref{cond:density_XNN} are fulfilled. We have, with probability at least $1-3\delta$, for all $x\in S_X$ such that $2 k \leq   T_0^d n \ell(x)$,
\begin{align*}
    &|\hat g_{\mathcal V} (x) - g(x) | \leq  \sqrt{\frac{2\sigma^2 }{ k }\log \left( \frac{(n+1)^{d+1}}{\delta }\right)  } + 2 \left(\frac{ 2 k   }{  n \ell(x)  }   \right)^{1/d} L(\mathcal{V}(x)).
\end{align*}
\end{theorem}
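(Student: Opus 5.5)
We follow the decomposition of Section \ref{s30}: the error $\hat g_{\mathcal V}(x)-g(x)$ is split into a variance term and a bias term, with $\mathcal V(x)=B(x,\hat\tau_{n,k}(x))$. Balls in $\mathbb R^d$ form a VC class of dimension $d+1$, so Theorem \ref{1}, applied to $\varepsilon_i$ and to $-\varepsilon_i$, gives on an event of probability at least $1-2\delta$
$$\sup_x \frac{|\sum_i \varepsilon_i \ind_{\mathcal V(x)}(X_i)|}{\sqrt{nP_n^X(\mathcal V(x))}}\le \sqrt{2\sigma^2\log((n+1)^{d+1}/\delta)}.$$
Since $\mathcal V(x)$ is a closed ball with radius equal to the $k$-NN distance, $nP_n^X(\mathcal V(x))\ge k$, with equality when there are no ties (ties occur with probability zero if $P^X$ has a density, or else the definition of $\hat g_{NN}$ with $1/k$ is used). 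Dividing by $\sqrt{nP_n^X}=\sqrt k$ then yields the first term of the bound. The bias term is at most $L(\mathcal V(x))\,\hat\tau_{n,k}(x)$, because every $X_i$ in the ball lies within distance $\hat\tau_{n,k}(x)$ of $x$. This is even better than $\diam$, so the factor $2$ in the statement leaves slack.

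The main step, and the expected obstacle, is a uniform high-probability bound on the random radius: $\hat\tau_{n,k}(x)\le \tau_x:=(2k/(n\ell(x)))^{1/d}$ simultaneously for all $x$ with $2k\le T_0^d n\ell(x)$. For such $x$ we have $\tau_x\le T_0$, and \ref{cond:density_XNN} gives $P^X(B(x,\tau_x))\ge 2k/n$; the boundary case $\tau_x=T_0$ is handled by monotonicity, using $\tau$ slightly below $T_0$. It then suffices to show that $nP_n^X(B(x,\tau_x))\ge k$ for all these $x$. For this we use a relative (multiplicative) VC deviation inequality over the class of balls, of the form: with probability $1-\delta$, for every ball $B$,
$$P^X(B)-P_n^X(B)\le 2\sqrt{P^X(B)\log(4\,\mathbb S(2n)/\delta)/n},$$
where $\mathbb S(2n)\le (2n+1)^{d+1}$ by Sauer. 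This is the same tool that underlies the constant in the $(\delta,n)$-large definition used in Theorem \ref{th2:general}, and it is presumably proved in the appendix, so we invoke it in that form. Setting $p=P^X(B)\ge 2k/n$ and $\Lambda=\log(4(2n+1)^{d+1}/\delta)$, we get $nP_n^X(B)\ge np-2\sqrt{np\Lambda}$. The function $t\mapsto t-2\sqrt{t\Lambda}$ is increasing for $t\ge \Lambda$, so
$$nP_n^X(B)\ge 2k-2\sqrt{2k\Lambda}\ge k$$
as soon as $k\ge 8\Lambda$, which is exactly the hypothesis on $k$. Therefore $B(x,\tau_x)$ contains at least $k$ points, and $\hat\tau_{n,k}(x)\le \tau_x$.

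A union bound over the two noise events ($2\delta$) and the radius event ($\delta$) gives probability at least $1-3\delta$. On this event the bias is at most $L(\mathcal V(x))\,\hat\tau\le 2(2k/(n\ell(x)))^{1/d}L(\mathcal V(x))$, which proves the claim. The delicate point is that the radius control has to hold uniformly in $x$ even though $\tau_x$ depends on $x$. This is handled because the relative deviation inequality is uniform over all balls, and we apply it to the deterministic family $B(x,\tau_x)$.
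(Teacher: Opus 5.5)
Your proof is correct and follows the paper's argument essentially step for step. You use the same deterministic radius $\tau_x=(2k/(n\ell(x)))^{1/d}$, the same normalized Vapnik inequality over balls combined with the monotonicity of $t\mapsto t-2\sqrt{t\Lambda}$ and $k\ge 8\Lambda$, and the same $2\delta+\delta$ probability budget coming from Theorem \ref{th:general}. Bounding the bias by the radius rather than the diameter, and treating the boundary case $\tau_x=T_0$ explicitly, are small refinements. One side remark is not accurate: ties cannot be excluded simultaneously for all $x$, since they occur on bisector hyperplanes between sample points. You do not need that remark, though, because $nP_n^X(\mathcal V(x))\ge k$ already yields the variance term.
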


Note that the conditions on the value of $k$ are satisfied for $n$ sufficiently large   and $k\asymp n^a $, for any $a\in (0,1)$. 
To our knowledge, the above result is new among the nearest neighbors literature, in which uniform deviation inequalities are provided, but only for densities uniformly bounded away from $0$. Such results have been investigated recently in \cite{jiang2019non} and \cite{portier2021nearest} for compactly supported covariates. In contrast, the above upper bound is valid for all $x$ in any domain $S_X$, at the price of accounting for regions with low density values, which may in general deteriorate the accuracy locally. We have the following corollary, in which we consider an optimal choice for $k$, as well as a uniform lower bound on the density.

\begin{corollary}\label{corknn}
In Theorem \ref{th:NN}, assuming that $ n $ is sufficiently large and choosing the integer $k \asymp n^{2/(d+2)} \log((n+1)^{d+1}/\delta)^{d/(d+2)}$, yields the following inequality, with probability at least $1-3\delta$ and for all $x\in S_X$, 
   \begin{align*}
      |\hat g_{\mathcal V}(x) - g(x) |  \lesssim c(x) \left( \frac{1}{n} \log \left( \frac{(n+1)^{d+1}}{\delta }\right) \right)^{1/(d+2)},
   \end{align*}
    where $c(x) =   \sqrt{2\sigma^2} + 2 L(\mathcal{V}(x)) \left({2}/\ell(x) \right)^{1/d}.$  
 %    Moreover, if $\ell(x) \geq b >0 $, for all $x\in S_X$,  we have, when $n$ is sufficiently large, with probability at least $1 - 3\delta $,
 % \begin{align*}
 %        & \sup_{x\in S_X} | \hat g_{\mathcal V}(x) - g(x)| \lesssim c \left( \frac{1}{n} \log \left( \frac{(n+1)^{d+1}}{\delta }\right) \right)^{1/(d+2)},
 %    \end{align*}
 %   where $c =   \sqrt{2\sigma^2} + 2 L \left(2/b \right)^{1/d}$.
\end{corollary}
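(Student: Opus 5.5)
Plug the prescribed $k$ into the bound of Theorem \ref{th:NN}: the two terms, of order $k^{-1/2}$ and $(k/n)^{1/d}$, are balanced by this choice. Write $\Lambda := \log\bigl((n+1)^{d+1}/\delta\bigr)$ and take $k \asymp n^{2/(d+2)} \Lambda^{d/(d+2)}$. The plan has three steps.

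\emph{Step 1: the hypotheses of Theorem \ref{th:NN} hold.} We need $k \geq 8\log\bigl(4(2n+1)^{d+1}/\delta\bigr)$. The right-hand side is at most $C_d\,\Lambda$, because $\log\bigl(4(2n+1)^{d+1}\bigr) \lesssim (d+1)\log(n+1)$ for $n\geq 1$, and $\log(1/\delta)\le \Lambda$. Since $k \asymp n^{2/(d+2)}\Lambda^{d/(d+2)}$, the condition becomes $n^{2/(d+2)} \gtrsim \Lambda^{2/(d+2)}$, that is $n \gtrsim \Lambda$. This holds for $n$ sufficiently large (for fixed $\delta$), which is the meaning of ``$n$ sufficiently large''. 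We also need $k\le n$, which again holds for large $n$. The locality condition $2k \le T_0^d n \ell(x)$ involves $x$. Since $k/n \to 0$, it holds for all $x$ with $\ell(x)$ bounded below by a vanishing threshold. I would read the corollary as holding for those $x$ covered by Theorem \ref{th:NN}, equivalently for all $x$ once $\ell$ is bounded below and $n$ is large. This is the one delicate point, and I would state it explicitly rather than hide it.

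\emph{Step 2: bound the variance term.} We have
$$\sqrt{2\sigma^2\Lambda/k} \asymp \sqrt{2\sigma^2}\,\bigl(\Lambda\, n^{-2/(d+2)}\Lambda^{-d/(d+2)}\bigr)^{1/2} = \sqrt{2\sigma^2}\,(\Lambda/n)^{1/(d+2)}.$$

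\emph{Step 3: bound the bias term.} We have
$$2(2k/(n\ell(x)))^{1/d}L(\mathcal V(x)) \asymp 2L(\mathcal V(x))\,(2/\ell(x))^{1/d}\,\bigl(n^{-d/(d+2)}\Lambda^{d/(d+2)}\bigr)^{1/d} = 2L(\mathcal V(x))(2/\ell(x))^{1/d}(\Lambda/n)^{1/(d+2)}.$$

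Summing the two bounds gives $c(x)(\Lambda/n)^{1/(d+2)}$, which holds on the event of probability at least $1-3\delta$ from Theorem \ref{th:NN}.

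The implicit constant in $\lesssim$ absorbs the constants of $k\asymp\cdot$ and the rounding of $k$ to an integer. Rounding is harmless because $k\ge1$ is large, so $\lceil\cdot\rceil$ at most doubles the value. The main obstacle is therefore only the bookkeeping in Step 1, namely checking that the lower bound on $k$ and the locality constraint are met. The rest is direct substitution.
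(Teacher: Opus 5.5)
Your proposal is correct and follows the paper's proof essentially verbatim. Like the paper, you sandwich $k$ between constant multiples of $n^{2/(d+2)}\Lambda^{d/(d+2)}$, check for large $n$ that $k\ge 8\log(4(2n+1)^{d+1}/\delta)$, and substitute into the two terms of Theorem \ref{th:NN}; the implicit constant is $\sqrt{a_+}/a_-$ in the paper's notation. Your caveat about the locality condition $2k\le T_0^d n\ell(x)$ is well taken: the paper simply asserts it holds ``when $n$ is large enough'', which is uniform in $x$ only if $\ell$ is bounded away from zero, so your explicit restriction is the more accurate reading of the statement.
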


Note that the convergence rate is the same as in the abstract Theorem \ref{main_result_localizing_map}. However, the constant $c$ in the first above statement differs significantly from that of Theorem  \ref{main_result_localizing_map} as when $\ell(x)$ is small, the constant in Theorem \ref{main_result_localizing_map} is of order \( { \ell(x) } ^{-1/2} \), while in Corollary \ref{corknn}, it scales as \(  \ell(x) ^{-1/d} \). This is explained by the fact that, in the proofs of the respective results, the value of $\ell(x)$ has an effect on the variance term, that contributes to the bound in Theorem \ref{main_result_localizing_map}, while it appears in the bias term for Corollary \ref{corknn}. 

\subsection{On random split trees: from  consistency to non-optimal rates}\label{sec_wag}

\subsubsection{Background}

We investigate the class of random split trees as considered in recent studies including \cite{meinshausen2006quantile,biau2012analysis,wager2018estimation}. For clarity, we adopt the formalism developed in \cite{wager2018estimation}, %These trees are particularly suitable to estimate average treatment effect and to provide some inferential guarantees.
%they rely on a structural regularity criterion -- specifically, a minimum proportion of observations in each child node -- which shares a common goal with shape regularity: the prevention of cell degeneracy. Some links will be drawn with our SR condition.
%Let us formally define the Wager class of trees. 
where the tree construction follows from four key properties: honest, symmetric, random-split, and $\alpha$-regular.
% \begin{definition}
%     A Wager tree is honest, symmetric, random-split, and $\alpha$-regular. 
% \end{definition}
We now state each assumption with a brief informative description. 

\noindent \textbf{Honesty.} The {honesty} assumption captures the independence between leaf construction and prediction. It is enforced in one of two ways: either the leaf construction relies solely on the covariates, or the data is split into two independent subsamples — one used to build the leaves (using both $X$
 and $Y$) and one used for prediction. Honesty allows one to work conditionally on the subsample used to build the leaves. In our work, the uniform result (over classes of sets) established in Theorem \ref{th:general}, make the honesty assumption unnecessary. Therefore, it will not be considered in what follows.

\noindent \textbf{Symmetry.} The symmetry property asserts that  the predictor's output does not depend on the indexing order of the training examples. It is of limited importance as it is satisfied by \rev{most practical algorithms, such as standard implementations of CART}. This will not be needed for our development either.

% Furthermore, the symmetry condition ensures that the predictor's output does not depend on the indexing order of the training examples. While this is a fundamental statistical consistency requirement for the theoretical study of trees and forests (\cite{wager2014asymptotic}), it does not necessarily enhance practical performance, 

 \noindent \textbf{Random-split.} A tree satisfies the {random-split} property if, at every step, the probability that the next split occurs along the $j$-th feature is bounded below by $\pi/d$ for some $0 < \pi \leq 1$. Moreover, it is also required that the sequence of selected features for splitting is independent. This ensures that no variable is indefinitely ignored and that all directions are eventually explored, providing comprehensive coverage of the feature space. 

 \noindent \textbf{$\alpha$-regularity.} The {$\alpha$-regularity} requires each split to keep at least a fraction $\alpha \in (0,1/2]$ of the available training examples in each child node. This prevents degenerate leaves and controls the effective depth of the tree, ensuring each leaf retains enough data to compute a reliable local mean. 
 %While, as we will see below, this facilitates convergence by balancing bias and variance, it however lacks in pratice the ability of CART to create leaves that adapt to complex regions, potentially resulting in a loss of local precision.  

The geometric integrity of the cells of the above trees rests on the random-split and $\alpha$-regularity hypotheses.
Intuitively, the random split condition ensures that all splitting directions are considered at each node, while $\alpha$-regularity prevents splits at extreme quantiles. For uniformly distributed covariates, this implies that the constructed leaves resemble squares rather than elongated rectangles, which supports the intuition that shape regularity should be satisfied. We revisit results from \cite{biau2012analysis,wager2018estimation,duroux2018impact} and confirm that this intuition is partially correct: consistency is indeed achieved for these trees. The main reason is that random split condition and
$\alpha$-regularity implies that the diameter of the leaves \rev{goes} to $0$. However, and perhaps surprisingly, the optimal rate cannot be achieved. We establish that shape regularity fails with positive probability for random split trees, which constitutes a negative result and calls for an alternative construction. 

%After showing that shape regularity leads to minimax-optimal rates, we propose such an alternative within the CART framework.

%In the following, we evaluate how the interplay between this $\alpha$-regularity and the random-split mechanism impacts the estimator's minimax optimality.

\subsubsection{Consistency of $\alpha$-regular and random split trees}\label{sec_wager}

The aim is to obtain a non-asymptotic upper bound on the error of $\alpha$-regular and random split tree.
%In this section, we prove that Wager trees are consistent for choices of depth $N$ of order of $\log(n)$, where $n$ denotes the number of data points $(X_i,Y_i)_{i=1,\ldots,n}$. 
As before, we consider a collection $(X,Y), (X_i,Y_i)_{i=1,\dots,n}$ of independent and identically distributed random variables with law $P$ and such that $X$ has a density on $ [0,1]^d$. We introduce a variant of the minimal mass assumption \ref{cond:density_X} stated in Section \ref{4.1}. %Define $R(x,h)$ as the hyper-rectangle with center $x$ and size length $h = (h_1,\ldots, h_d)^T \in \mathbb R^d $.

\begin{enumerate}[label=(XTREE), wide=0.5em,  leftmargin=*]
\item  \label{cond:density_XCART} The random variable $X$ admits a density function $f_X$ on $[0,1]^d$. There are two constants $M, b > 0$ such that $\forall x \in \mathcal [0,1]^d$,  $b \leq f_X(x) \leq M$.
\end{enumerate}

Note that the above assumption is stronger but more convenient than \ref{cond:density_X}, as it no longer involves the local map $\mathcal V$, that depends on the sample.  The tree structure is defined with the help of the following notation. For any $x \in [0,1]^d$, consider a sequence of nested cells $(\mathcal{V}_i(x))_{0 \leq i \leq N}$ such that $x \in \mathcal{V}_N(x)\subset \mathcal{V}_{N-1}(x)\subset \dots \mathcal{V}_{0}(x) = [0,1]^d$. Each transition from one parent $\mathcal{V}_{i-1} (x)$ to its children $\mathcal{V}_{i}(x)$ is associated with a cutting direction $D_i(x)$ and an axis-aligned split position $U_i(x)$.
The following condition formally requires the \textit{random split} and the $\alpha$-\textit{regularity} condition as introduced previously. 
%For the sake of simplicity, we shall omit the explicit dependence on $x$ and denote them simply as $U_i$ and $D_i$.
\begin{enumerate}[label=(W), wide=0.5em,  leftmargin=*]
\item \label{cond:Wager} For any $x\in [0,1]^d$, the directions $(D_i(x))_{i=1,\dots,N}$ are independent and for every $j \in \{1, \dots, d\}$, $\mathbb{P}(D_i(x)=j) \geq \pi/d$ with $\pi\in (0 ,  1]$. Moreover, there is $\alpha \in (0,1/3]$ such that, for all $x\in [0,1]^d$ and $i \in \{1,\ldots, N\}, $%for every internal node $\mathcal{V}$, its two children $\mathcal{V}_L$ and $\mathcal{V}_R$ satisfy
%$$\min \left( {P}_n^X(\mathcal{V}_L), {P}_n^X(\mathcal{V}_R) \right) \geq \alpha {P}_n^X(\mathcal{V}),$$ 
 $$  P _n^X(\mathcal{V}_{i}(x) )  \geq \alpha P _n^X(\mathcal{V}_{i-1}(x) ) ,$$
where we recall that $P_n^X(V) = n^{-1} \sum_{i=1} ^n \ind _V(X_i)$ is the proportion of observations $X_i$ falling into a cell $V$. 
\end{enumerate}

Note that the split sizes $(U_i(x))_{i=1,\dots,N}$ may depend on both the $(D_i(x))_{i=1,\dots,N}$ and the data $(X_i,Y_i)_{i=1,\dots,n}$. This ensures that the above construction is admissible. Specifically, suppose there are at least $2$ points in  $\mathcal{V}_N(x) $. %It follows that
Since $X$ possesses a density, they cannot be axis-aligned and therefore, regardless of the cutting direction $ D_{N+1}(x)$, preserving \ref{cond:Wager} is possible by choosing $U_{N+1}(x)$ so to split the points into two halves ($\alpha \leq 1/3$ is to deal with the situation where an odd number of points, e.g., $3$, are in the cell). But since each split preserves at least a fraction $\alpha$ of the points from the parent node, it holds that $P_n^X(\mathcal{V}_N(x)) \geq \alpha^N$. %and consequently, $P_n^X(\mathcal{V}_i(x)) \geq \alpha^N$ for all $i= 0,\ldots, N$. 
Hence, whenever $n \alpha^{N}>1 $, there are at least $2$ points in  $\mathcal{V}_N(x) $. As a consequence, whenever $n\alpha ^N>1$, the tree can be grown further while satisfying \ref{cond:Wager}.

%Indeed, as long as there are at least two points in the cell, it is always possible to continue splitting for a certain $\alpha$.

%the following condition $8 \log ( 4 (2n+1)^{2d} / \delta) \leq n \alpha^{N+1}$ ensures that the number of points within each cell -- given by $n P_n(\mathcal{V}_N(x))$ -- is at least of the order of $\log(n)$. This construction is feasible since the $X_i$ possess a density, preventing them from being concentrated on degenerate sets (such as being axis-aligned). Indeed, as long as there are at least two points in the cell, it is always possible to continue splitting for a certain $\alpha$.

The following proposition is key to obtain the consistency as it implies that the diameter of any cell shrinks to $0$. This is a relevant property in light of Theorem \ref{th:general} where one term in the upper bound is proportional to the diameter of the cell. Define 
$$ h_+(\mathcal{V}_N(x)) = \max_{j=1, \dots, d} h_j(\mathcal{V}_N(x))  ,$$
where $h_j(\mathcal{V}_N(x))$ denotes the length of the $j$-th side of the cell $\mathcal{V}_N(x)$.

%\color{blue}(This is only when $X$ has a density. In this case, the tree can be grown until the cell contains just $1$ points. We can then stop the growing process. This is in fact not really needed because we will assume that $N$ is small enough to keep a sufficient number of points. Is it correct? Perhaps we could say that clearly... I think that this assumption 8 $\log ( 4 (2n+1)^{2d} / \delta) \leq n \alpha^{N+1}$ is in fact saying that we have at least an order of $\log(n)$ points in the cell)\color{black}. \color{black}
%A condition, that is not  \ref{cond:Wager} is weaker in some respects, it can be viewed as a subclass of Wager's framework, since we require the $D_i$ to be independent.

%According to condition \ref{cond:Wager}, the total number of splits performed along the $j$-th coordinate on a given path is defined by the random variable $N_j := \sum_{i=1}^N \ind_{D_i=j}$. %Geometrically, starting from an initial cell with side lengths equal to 1, we denote $h_j$ the final side length in direction $j$ after $N_j$ splits. 
%The $\alpha$-regularity condition ensures that each split maintains at least a fraction $\alpha$ of points between the parent and child cells. Finally, we assume that the density of the observations $(X_i)_{i=1,\dots,n}$ is bounded from below and above by constants $b$ and $M$, respectively which ensures that the maximum side lenth of the consider cell decreses exponentially fast to 0 with the number of splits $N$.

\begin{proposition}\label{propWager}
Let $n \geq 1$, $N\geq 1$ and $\delta \in (0,1/3) $. Grant Assumptions \ref{cond:D}, \ref{cond:density_XCART}, \ref{cond:Wager}. Suppose that $N \pi \geq 8d \log(d/\delta)$ and $16 \log ( 4 (2n+1)^{2d} / \delta) \leq n \alpha^{N}$. Then, for any $x\in [0,1]^d$, the maximum side length $h_+(\mathcal{V}_N(x))$ satisfies, with probability at least $1-3\delta$, for all $x \in [0,1]^d$,
$$h_+(\mathcal{V}_N(x))  \leq \left(1-\dfrac{\alpha b}{8M}\right)^{{N\pi}/{(2d)}}.$$
\end{proposition}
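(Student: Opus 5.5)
The proof combines two estimates. A multiplicative Chernoff bound, taken over all directions, controls how many times each coordinate is split. A uniform VC control shows that every split which keeps a fraction $\alpha$ of the sample also shrinks the chosen side by a fixed factor.

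\emph{Step 1: many splits per coordinate.} Fix a coordinate $j$. By \ref{cond:Wager}, the count $N_j(x)=\sum_{i=1}^N \ind_{D_i(x)=j}$ is a sum of independent Bernoulli variables with total mean at least $N\pi/d$. The multiplicative Chernoff bound gives
\[
\PP\bigl(N_j(x) < N\pi/(2d)\bigr)\leq \exp(-N\pi/(8d)).
\]
By the assumption $N\pi\geq 8d\log(d/\delta)$, this is at most $\delta/d$. A union bound over $j$ then gives, with probability at least $1-\delta$, that $N_j(x)\geq N\pi/(2d)$ for every $j$. For the statement to hold simultaneously in $x$, I would use that the path of $x$ is determined by which leaf contains $x$. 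Alternatively, I would read the direction sequence as attached to the nodes of the tree, so that the same event is used for all $x$. This is the step I expect to be most delicate to phrase, since a union bound over infinitely many $x$ is not available. The cleanest route is to apply the bound to the (at most $d^N$ distinct, but in fact one per root-to-leaf path) direction sequences. If the randomness is shared along the path, the direction sequence does not depend on $x$ beyond the leaf.

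\emph{Step 2: each split in direction $j$ shrinks side $j$ by the factor $1-\alpha b/(8M)$.} Consider a split of $\mathcal V_{i-1}(x)$ along $j$, and let $\rho$ be the fraction of the $j$-th side removed. The discarded part $W$ is a hyper-rectangle that keeps a $\rho$-fraction of side $j$, with $P_n^X(W)\geq \alpha P_n^X(\mathcal V_{i-1}(x))$ by $\alpha$-regularity. I would pass from empirical to true measures uniformly over the class of hyper-rectangles, which has VC dimension $2d$. Concretely, I would use a relative-deviation VC inequality of the type used for Theorem \ref{th2:general}, with shattering coefficient $(2n+1)^{2d}$ after symmetrization, at level $\delta$. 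On this event, every rectangle $A$ with $nP_n^X(A)\geq 16\log(4(2n+1)^{2d}/\delta)$ satisfies $P^X(A)\geq P_n^X(A)/2$, and conversely $P_n^X(A)\leq 2P^X(A)$ up to the same threshold. The hypothesis $n\alpha^N\geq 16\log(\cdots)$ ensures that all cells along the path and their discarded parts exceed this threshold, since their empirical mass is at least $\alpha^N$. The following chain then holds:
\[
M\rho\,\lambda(\mathcal V_{i-1})\geq P^X(W)\geq \tfrac12 P_n^X(W)\geq \tfrac{\alpha}{2}P_n^X(\mathcal V_{i-1})\geq \tfrac{\alpha}{4}P^X(\mathcal V_{i-1})\geq \tfrac{\alpha b}{4}\lambda(\mathcal V_{i-1}).
\]
Hence $\rho\geq \alpha b/(4M)\geq \alpha b/(8M)$; the slack of a factor $2$ absorbs constants in the deviation inequality. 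The retained child keeps side $j$ multiplied by $1-\rho\le 1-\alpha b/(8M)$, and I apply the argument to whichever child contains $x$. This event has probability at least $1-2\delta$, uniformly in all cells and hence in all $x$.

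\emph{Step 3: conclusion.} Side $j$ starts at length $1$ and never increases. On the intersection of the two events, which has probability at least $1-3\delta$, it is multiplied by at most $1-\alpha b/(8M)$ at each of at least $N\pi/(2d)$ splits. This gives $h_j(\mathcal V_N(x))\leq (1-\alpha b/(8M))^{N\pi/(2d)}$ for all $j$, hence the bound on $h_+$ holds for all $x$.
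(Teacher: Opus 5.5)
Your Steps 1--3 follow the paper's proof. The first ingredient is a multiplicative Chernoff bound with a union bound over the $d$ coordinates: the paper keeps a general $t$ and then uses $N\pi\ge 8d\log(d/\delta)$, which amounts to your $t=1/2$. The second is a two-sided uniform Vapnik comparison of $P_n^X$ and $P^X$ over hyper-rectangles at cost $2\delta$ (Lemmas~\ref{lemmaWager0} and~\ref{lemmaWager_new}). It is applied to both children of each split, so the chosen side shrinks by at least the factor $1-\alpha b/(8M)$.

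\textbf{Constants in Step~2.} ``$P^X(A)\ge P_n^X(A)/2$'' and ``$P_n^X(A)\le 2P^X(A)$'' are the same inequality, not converses. Your chain needs a lower bound on $P_n^X(\mathcal V_{i-1})$. That comes from the first statement of Theorem~\ref{th:vapnik_normalized}, once the threshold has been transferred to $nP^X(\mathcal V_{i-1})\ge 8\log(4(2n+1)^{2d}/\delta)$. It only gives $P_n^X\ge(1-1/\sqrt2)P^X\ge P^X/4$. Your chain then yields exactly $\rho\ge\alpha b/(8M)$, the paper's constant, with no slack left. The conclusion of Step~2 is unaffected.

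\textbf{The step you flagged is a genuine gap, and neither of your repairs closes it.} A union bound over root-to-leaf paths costs a factor up to $2^N$. With your threshold it would require $d\,2^N e^{-N\pi/(8d)}\le\delta$, which is impossible since $\pi/(8d)<\log 2$. Saying the directions are shared along a path does not help, because different leaves carry different direction sequences.

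In fact the simultaneous-in-$x$ statement can be false under \ref{cond:Wager}. Let every node draw its direction uniformly, independently of everything else, and split at the empirical median. Then \ref{cond:Wager} holds with $\pi=1$ for each fixed $x$. The cells never cut along coordinate~$1$ form a Galton--Watson tree with $0$ offspring with probability $1/d$ and $2$ offspring otherwise. For $d\ge3$ it survives forever with probability $(d-2)/(d-1)$, whatever $n$ and $N$ are. On that event some depth-$N$ cell has first side $[0,1]$, so $\sup_x h_+(\mathcal V_N(x))=1$.

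What your argument proves is the pointwise version: for each fixed $x$, with probability at least $1-3\delta$, the bound holds. Your Step~2 event is uniform in $x$, but the Chernoff event is not. A uniform statement needs extra structure, for instance a direction $D_i$ common to all cells at depth $i$, so that the Chernoff event does not depend on $x$. The paper's proof has the same limitation: it applies Chernoff to $(D_i(x))_i$ for a single $x$ and then asserts the conclusion ``for all $x$''.
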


Note that 
%the density of the observations $(X_i)_{i=1,\dots,n}$ is bounded from below and above by constants $b$ and $M$, respectively which ensures that the maximum side lenth of the consider cell decreses exponentially fast to 0 with the number of splits $N$. 
%In proposition \ref{propWager}, 
two conditions are imposed on the number of splits $N$. The first condition requires $N$ to be sufficiently large, so that each direction is split many times. The second requires $N$ to be small enough to maintain a critical mass of data points within each cell. 

Furthermore, %in accordance with the $\alpha$-regularity criterion, the empirical measure of a cell $\mathcal{V}_N$ at the $N$-th iteration, denoted as $P_n^X(\mathcal{V}_N(x))\geq n\alpha^N$, is bounded below by $\alpha^N$. This inequality reflects the cumulative effect of the recursive partitioning: since each split preserves at least a fraction $\alpha$ of the points from the parent node, a leaf at depth $N$ necessarily contains at least an $\alpha^N$ proportion of the total sample. 
%Consequently, 
 applying Theorem \ref{th:general} and using that $P_n^X(\mathcal{V}_N(x))\geq \alpha^N$, we obtain an upper bound on the point-wise error of $\alpha$-regular and random split trees.

\begin{theorem}\label{Wager}
Let $n \geq 1$, $N\geq 1$ and $\delta \in (0,1/5) $. Grant Assumptions \ref{cond:D}, \ref{cond:density_XCART}, \ref{cond:epsilon}, \ref{cond:reg4} and \ref{cond:Wager}. Suppose that $N \pi \geq 8d \log(d/\delta)$ and $16 \log ( 4 (2n+1)^{2d} / \delta) \leq n \alpha^{N}$. Then it holds,  with probability at least $1 - 5 \delta $, for any $x\in [0,1]^d$,
   % \begin{align*}
   %    &|  \hat g_{\mathcal V}(x) - g(x)| \\
   %    &\leq \sqrt{\frac{ 2 \sigma^2 \log\left( \frac{ (n+1) ^v }%{\delta} \right)}{ n \PP_n(\mathcal V (x))  } } + \sup_{y \in %\mathcal V (x)} |g(y) - g(x)|
    %\end{align*}
        \begin{align*}
       |  \hat g_{\mathcal V}(x) - g(x)| \leq \sqrt{\frac{ 2 \sigma^2 }{ n \alpha^N  } \log\left( \frac{ (n+1) ^{2d}}{\delta} \right)} + L(\mathcal V(x)  ) \sqrt{d} {\left(1-\dfrac{\alpha b}{8M}\right)^{N\pi/(2d)}}.
    \end{align*}
\end{theorem}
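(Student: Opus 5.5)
The plan is to combine Theorem \ref{th:general} with Proposition \ref{propWager} through a union bound, adjusting only the complexity term and the variance denominator. The leaves $\mathcal V_N(x)$ are axis-aligned hyper-rectangles $(s,t]\cap[0,1]^d$. They therefore belong to the deterministic class of rectangles $\{(s,t] : s,t\in\RR^d\}$, whose VC dimension is $2d$. The boundary conventions (open/closed faces) only change the class within a finite union of rectangle classes and do not affect the argument. Hence the local map is VC with $v=2d$, and Theorem \ref{th:general} applies. It gives, with probability at least $1-2\delta$ and simultaneously for all $x\in[0,1]^d$,
\begin{align*}
|\hat g_{\mathcal V}(x)-g(x)| \leq \sqrt{\frac{2\sigma^2}{nP_n^X(\mathcal V_N(x))}\log\left(\frac{(n+1)^{2d}}{\delta}\right)} + L(\mathcal V_N(x))\diam(\mathcal V_N(x)).
\end{align*}
This step does not require honesty, since Theorem \ref{th:general} is uniform over the class $\mathcal A$.

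Next I bound the variance term deterministically. Iterating the $\alpha$-regularity condition in \ref{cond:Wager} from $P_n^X(\mathcal V_0(x))=P_n^X([0,1]^d)=1$ gives $P_n^X(\mathcal V_N(x))\geq \alpha^N$ for every $x$. So $nP_n^X(\mathcal V_N(x))\geq n\alpha^N$, which yields the first term of the statement. The assumption $16\log(4(2n+1)^{2d}/\delta)\leq n\alpha^N$ guarantees that this quantity is positive and that the $0/0$ convention never applies.

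For the bias term, I use $\diam(\mathcal V_N(x))\leq \sqrt d\, h_+(\mathcal V_N(x))$, valid for any hyper-rectangle. Proposition \ref{propWager} holds under exactly the stated hypotheses. It gives, with probability at least $1-3\delta$ and uniformly in $x$, $h_+(\mathcal V_N(x))\leq (1-\alpha b/(8M))^{N\pi/(2d)}$. A union bound on the two events then yields the claim with probability at least $1-5\delta$. The restriction $\delta<1/5$ keeps this probability nontrivial, and it also implies $\delta<1/3$ and $\delta<1/2$, as required by the two results.

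I expect the main care point to be justifying that the local map is VC with respect to a \emph{fixed} class even though the tree depends on the data and on the directions. This is handled by noting that every possible leaf, whatever the randomness, is an axis-aligned box, so $\mathcal A$ can be chosen deterministic. A second point to check is that the uniform-in-$x$ statement of Proposition \ref{propWager} and the sup over $x$ in Theorem \ref{th:general} are combined on the same intersection event. Everything else is direct substitution.
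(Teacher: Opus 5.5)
Your proposal is correct and matches the paper's proof step for step: Theorem \ref{th:general} with $v=2d$ (probability $1-2\delta$), the deterministic bound $P_n^X(\mathcal V_N(x))\geq \alpha^N$ from iterated $\alpha$-regularity, $\diam(\mathcal V_N(x))\leq \sqrt d\, h_+(\mathcal V_N(x))$, and Proposition \ref{propWager} (probability $1-3\delta$), combined by a union bound to reach $1-5\delta$. Your added remarks on using a deterministic class of boxes and on intersecting the two uniform-in-$x$ events are sound. On boundary conventions, a cleaner justification than ``a finite union of rectangle classes'' is that a box with any mix of open and closed faces has the same trace on a finite point set as some closed box, so the shattering coefficient, and hence $(n+1)^{2d}$, is unchanged.
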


As soon as $N \to \infty$ and $\log(n)/(n \alpha^N) \to 0$, we obtain the consistency of the partition-based estimator. This holds, in particular, for the choice of depth $N$ such that $N = (-2\log(\alpha))^{-1} \, \log(n)$. With this choice of $N$, and for a fixed $\delta \in (0,1/6)$, the conditions $N \pi \geq 8d \log(d/\delta)$ and $16 \log ( 4 (2n+1)^{2d} / \delta) \leq n \alpha^{N} = \sqrt{n} $ are indeed satisfied for sufficiently large $n$. Moreover, by optimizing the bound, we obtain that $N$  should be set as $\log(n)/C$ where $C = -\log(\alpha) - \log(1- \alpha b / (8M)) \pi/d$.
It follows that the total error scales as $n^{-s}$ with $s = {(2 + dK)^{-1}}$ where $K = {2 \log(\alpha)}/{(\pi \log\left( 1 - {\alpha b}/{(8M)} \right))} > 52$\rev{, the numerical lower bound following from the constraints $\alpha \leq 1/3$, $\pi \leq 1$ and $b \leq M$}. Note that in the classical minimax framework, the variance typically scales as $1/\sqrt{nh}$ while the bias scales as $h^{1/d}$. Although our variance follows the $1/\sqrt{nh}$ rate for $h=\alpha^N$, our setting differs because the bias is of order $h^{c/d}$ for $c = \pi \log(1-\alpha b/(8M)) / (2\log(\alpha))$ and this consequently yields a sub-optimal bound. 

It is worth noticing that our previous analysis, based on condition \ref{cond:Wager}, does not require the original symmetry and honesty conditions of \cite{wager2018estimation}. %The next two results will be thus valid for the corresponding subclass of Wager trees. 

The previous results may be viewed as ``uniform'' version of the point-wise results obtained in \cite{wager2018estimation,duroux2018impact}. Compared with \cite{wager2018estimation}, we obtain a similar convergence rate, although it is expressed in a different form since our trees are not fully grown and our result holds for any depth $N$. In contrast to \cite{duroux2018impact}, our bound is derived under a different splitting rule based on the sample median, which leads to a different rate. It is also worth noting that our result provides a deviation bound, whereas \cite{duroux2018impact} establishes an $L_2$-bound. Furthermore, our statement is fully explicit, both in terms of the problem-dependent constants and the confidence parameter $\delta$.
%$$N \approx \frac{-\log(n)}{ \log(\alpha)+ \log(1- \alpha b / (12M)) \pi/d  }.$$

\subsubsection{On the insufficiency of random split for shape regularity}\label{wager_sousopt}

%The geometric instability demonstrated in Section \ref{sec_sub} for general random partitions finds a direct illustration in the case of certain trees satisfying the condition of $\alpha$-regularity and the possibility of splits along all directions, which are fundamental criteria for Wager trees \cite{wager2014asymptotic}. This model, introduced in Section \ref{sec_wager}, primarily relies on a simple stochastic splitting rule. 

The aim here is to show that the Wager conditions, random split and $\alpha$-regularity are not sufficient to ensure shape regularity of the cells. 

\rev{To this end, we isolate the sub-class of random-split trees in which the cutting directions are, in addition, independent of the split positions; we call these ``blind'' trees.} We study here the class of ``blind'' trees characterized by a strong independence structure: the cutting directions are uniformly distributed and independent from the split positions. This is formally stated in the following assumption.

\begin{enumerate}[label=(BL), wide=0.5em,  leftmargin=*]
\item \label{cond:indep_split_position} For all $x\in [0,1]^d$, the cutting directions $(D_i(x))_{i=1,\dots,N}$ and the split positions $(U_i(x))_{i=1,\dots,N}$ are independent. $(D_i(x))_{i=1,\dots,N}$ is an independent collection of random variables with common uniform distribution over $\{1, \dots, d\}$. $(U_i(x))_{i=1,\dots,N}$ are away from the edges i.e., there exists $\rho \in (0,1/2]$ such that for all $i \in \{ 1,\dots,N\},  U_i(x) \in  [\rho, 1-\rho]$. 
\end{enumerate}

%Consider a tree of depth $N$ where the cutting directions $(D_i)_{i=1,\dots,N}$ are i.i.d., uniformly distributed over $\{1, \dots, d\}$ and independant of the relative split positions $(U_i)_{i=1,\dots,N}$. Assume that there exists $\rho >0$ such that for all $i \in \{ 1,\dots,N\},  U_i \in  [\rho, 1-\rho]$. Furthermore, suppose that the density $f_X$ of the observations is such that there exist $m, M > 0$ with $m \leq f_X(x) \leq M$ for all $x \in S_X$. For all $N \geq 1$ and $n \geq 1$ such that $8 \log(16(2n+1)^{2d}) \leq n {\alpha}^{N+1}$, the following inequality holds \begin{align*}
%\PP\left(\frac{h_+(\mathcal V_N (x) )}{h_-(\mathcal V_N (x) )}\geq \exp\left(\sqrt{\frac{N  \log(1-\tilde \alpha)^2}{d}}\right)\right) &\geq \dfrac{ \log(1-\tilde \alpha)^4}{48d \log(\tilde \alpha)^4}
%\end{align*}
%where $\mathcal{V}_N(x)$ denotes the cell containing $x$ at depth $N$ and $\tilde{\alpha} = \alpha m / (12M) > 0$. Consequently, these Wager trees are not shape regular with strictly positive probability.
\begin{theorem}\label{Wager_subopt}
Consider a tree of depth $N \geq 1$ and grant \rev{Assumptions} \ref{cond:density_XCART} and \ref{cond:indep_split_position}. For all $x\in  [0,1]^d$, and all $N \geq 1$ and $n \geq 1$ such that $16\log( 192d \nu^4 (2n+1)^{2d}) \leq n (b\rho / M ) ^{N} $,
with $\nu = \log(\rho)/\log(1-\rho)$, the following holds with probability at least $\nu^4/(24d)$: 
\begin{itemize}
    \item[(i)] the associated tree is $\tilde{\alpha}$-regular with $\tilde{\alpha} = {b\rho}/{(8M)}$,
    \item[(ii)] the shape regularity factor ${h_+(\mathcal V _ N (x) )}/{h_-(\mathcal V_N (x) )}$ is bounded from below by $(1-\rho)^{-\sqrt{N/d}}.$ 
\end{itemize} 
\end{theorem}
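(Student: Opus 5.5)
We work along the path of cells $\mathcal V_0(x)\supset\dots\supset\mathcal V_N(x)$ containing $x$. Under \ref{cond:indep_split_position}, each split along direction $D_i$ at relative position $U_i\in[\rho,1-\rho]$ multiplies the $D_i$-th side length of the cell by a factor $\xi_i\in[\rho,1-\rho]$. Here $\xi_i$ equals $U_i$ or $1-U_i$, according to which child contains $x$. Hence
$$\log h_j(\mathcal V_N(x))=\sum_{i=1}^N \ind_{D_i=j}\log\xi_i ,\qquad \log\xi_i\in[\log\rho,\log(1-\rho)].$$
The plan has two parts. Part (i) is a deterministic/VC statement: the $\tilde\alpha$-regularity holds on a high-probability event. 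Part (ii) is a probabilistic anti-concentration statement about the random walk above: the difference between two coordinates has fluctuations of order $\sqrt{N/d}$. We then intersect the two events and subtract failure probabilities to obtain the constant $\nu^4/(24d)$.

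For (ii), fix two directions, say $1$ and $2$, and write
$$\log\frac{h_1}{h_2}=\sum_i(\ind_{D_i=1}-\ind_{D_i=2})\log\xi_i .$$
Since $h_+/h_-\ge \max(h_1/h_2,h_2/h_1)$, it suffices to show that $|\log(h_1/h_2)|\ge \sqrt{N/d}\,|\log(1-\rho)|$ with probability at least about $\nu^4/(12d)$. We condition on $(\xi_i)$, which by \ref{cond:indep_split_position} is independent of the $D_i$. The summands $Z_i=(\ind_{D_i=1}-\ind_{D_i=2})\log\xi_i$ are then independent and centered, with variance $s^2=\frac{2}{d}\sum_i(\log\xi_i)^2\ge \frac{2N}{d}\log(1-\rho)^2$ and fourth moments bounded by $\frac{2}{d}\sum_i(\log\xi_i)^4\le \frac{2N}{d}\log(\rho)^4$. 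We apply the Paley--Zygmund inequality to $S^2$, where $S=\sum_iZ_i$:
$$\PP\big(S^2\ge \theta\,\EE S^2\big)\ge (1-\theta)^2\frac{(\EE S^2)^2}{\EE S^4}.$$
To bound $\EE S^4$, we use $\EE S^4\le 3(\EE S^2)^2+\sum_i\EE Z_i^4$. The ratio $(\EE S^2)^2/\EE S^4$ is then bounded below by a quantity of order $\nu^{4}$ times a $1/d$ factor, using $\log(1-\rho)^4/\log(\rho)^4=\nu^{-4}$. There is a possible inversion of $\nu$ here, since $\nu\ge 1$; I expect the intended bound to be $\nu^{-4}$, and I would track the constant carefully. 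Taking $\theta=1/2$ gives $S^2\ge \frac{N}{d}\log(1-\rho)^2$, that is $h_+/h_-\ge(1-\rho)^{-\sqrt{N/d}}$, with probability at least $c\,\nu^{\mp4}/d$. The bound is conditional on $(\xi_i)$ but uniform in it, so it also holds unconditionally. The main obstacle is this step: obtaining the precise constant $1/(24d)$, and handling the fact that $\xi_i$ depends on which child contains $x$. The latter may correlate $\xi_i$ with $D_i$, so I would need to argue that the child selection is a function of $U_i$ and of the coordinate $x_{D_i}$ relative to the cut. If that is problematic, I would replace $\xi_i$ by its deterministic bounds $\rho\le\xi_i\le1-\rho$ and run a cruder second-moment argument on the counts $N_1-N_2$, combined with a bound on $\sum_i\ind_{D_i=j}$.

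For (i), note that the cell $\mathcal V_i(x)$ is a hyper-rectangle, and its child keeps at least a fraction $\rho$ of each side. Under \ref{cond:density_XCART}, this gives $P^X(\mathcal V_i)\ge (b\rho/M)P^X(\mathcal V_{i-1})$, and hence $P^X(\mathcal V_N)\ge (b\rho/M)^N$. To transfer this to empirical measures, we use a relative VC deviation inequality for the class of hyper-rectangles, whose VC dimension is $2d$, with shattering coefficient controlled by $(2n+1)^{2d}$. This is the same tool that underlies the $(\delta,n)$-large condition and Theorem \ref{th2:general}. On an event of probability at least $1-\delta'$, every rectangle $A$ with $nP^X(A)\ge16\log(4(2n+1)^{2d}/\delta')$ satisfies $P^X(A)/2\le P_n^X(A)\le 2P^X(A)$, and this is where the constant $8$ in $\tilde\alpha$ comes from. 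The hypothesis $16\log(192d\nu^4(2n+1)^{2d})\le n(b\rho/M)^N$ is exactly this condition with $\delta'=\nu^4/(48d)$. It therefore yields $P_n^X(\mathcal V_i)\ge \frac{b\rho}{8M}P_n^X(\mathcal V_{i-1})$ for all $i$, that is, $\tilde\alpha$-regularity. Finally,
$$\PP\big((i)\cap(ii)\big)\ge \frac{\nu^4}{12d}-\frac{\nu^4}{48d}\ge\frac{\nu^4}{24d}.$$
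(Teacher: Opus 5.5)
Your proposal is essentially the paper's proof: part (ii) is Proposition~\ref{lemmaPZ} (Paley--Zygmund applied to $Z_{1,2}=\log(h_1/h_2)$ with $\EE(E_i^2)\ge\log(1-\rho)^2$ and $\EE(E_i^4)\le\log(\rho)^4$), and part (i) is Lemma~\ref{reci_wager_new}; the independence you worry about is part of the model, since Appendix~\ref{sec_sub} takes $U_i$ to be the child-to-parent ratio itself and assumes it independent of $(D_i)$, so conditioning on $(\xi_i)$ is legitimate (the count-based fallback would not be, because for $\rho<1/2$ ratios allowed to depend on the directions can absorb the $O(\sqrt N)$ imbalance of $N_1-N_2$). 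Your suspicion about $\nu$ is right, since the paper's own proof ends with $(\log(1-\rho)/\log\rho)^4/(24d)=\nu^{-4}/(24d)$, so take $\delta'=\nu^{-4}/(48d)$ (this is what the hypothesis $16\log(192d\nu^4(2n+1)^{2d})\le n(b\rho/M)^N$ encodes) and subtract $2\delta'$ because the two-sided comparison $P^X/2\le P_n^X\le 2P^X$ uses two Vapnik events; with your bound $\EE S^4\le 3(\EE S^2)^2+\sum_i\EE Z_i^4$ and $\theta=1/2$ you get $\PP(\text{(ii)})\ge 1/(12+2d\nu^4)\ge\nu^{-4}/(12d)$ for $d\ge 2$, hence exactly $\nu^{-4}/(12d)-\nu^{-4}/(24d)=\nu^{-4}/(24d)$.
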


Note that the factor $(1-\rho)^{-\sqrt{N/d}}$ diverges as $N \to \infty$. Observe that, for the choice $N = \log(n)/(2 \log(M/(b\rho)))$, the condition on $n$  and $N$ from the above theorem becomes $$ 16\log(192 \nu ^4 d (2n+1)^{2d} ) \leq \sqrt{n},$$ which is easily satisfied for $n$ large enough. Let us also mention that the order of $\log(n)$ for this choice of $N$ is standard as observed in \cite{biau2012analysis,wager2018estimation}.
%Furthermore, observe that condition \ref{cond:Wager} is satisfied with $\pi = 1$.

%The result established in Theorem \ref{Wager_subopt} provides a quantitative basis for the suboptimality of a specific class of "data-independent" trees. 

% It is crucial to emphasize that this theorem targets trees where the split positions are determined independently of the cutting directions. While the $\alpha$-regularity condition effectively prevents the formation of "empty" cells by ensuring that each split retains a minimum proportion of points, 

%we demonstrate that it is fundamentally insufficient to preserve the geometric integrity of the partition in this setting. 

The central contribution of this theorem is to prove that with high probability, there are some trees, that both satisfy \ref{cond:Wager} and are such that for any fixed point $x \in [0,1]^d$, the cell aspect ratio -- defined as the ratio between the largest and smallest sides of the cell containing $x$ -- grows at an exponential rate of order $\exp(\sqrt{N})$. Since this lower bound tends to infinity with $N$, this highlights a structural instability in this subclass of trees. This exponential divergence as $N$ increases suggests that the estimator cannot achieve the optimal rate (see Section \ref{leading_ex} and in particular \rev{Proposition} \ref{contre_ex}). Consequently, it shows that the random split condition is not sufficient to guarantee the shape regularity of the cells, and is likely insufficient to achieve the optimal rate.

This phenomenon of geometric instability stems from the absence of a correction mechanism: since axes are chosen uniformly at random without regard to the cell's current geometry, the algorithm cannot guarantee that elongated cells will be split along their longest side to restore their balance. Cutting directions are selected without considering either the current shape of the cell or the data distribution. Unlike adaptive algorithms such as CART or Mondrian trees -- which can ``correct'' a cell's elongation by splitting along its longest axis -- the random split mechanism is missing this feedback mechanism. %This inherent lack of feedback underscores why a dedicated constraint on cell geometry, namely shape regularity, is indispensable for achieving optimal statistical efficiency.

%Furthermore, geometric instability can be interpreted as a waste of the splitting budget $N$. By choosing axes randomly, the algorithm allocates splits to dimensions that are already compressed, allowing other dimensions to expand without control.

%This lack of coupling between cell geometry and observations drastically limits the adaptivity of the estimator. Indeed, the tree structure lacks any mechanism to correct the previously highlighted shape imbalances, which accumulate through successive splits. By verifying that the moment conditions of Corollary \ref{cor_moments} are satisfied under the specific constraints of this model, and by confirming that we indeed have a retention of a proportion $\alpha$ of points per cell ($\alpha$-regularity), we obtain the following result.

The analysis of this subclass of trees serves as a primary example for understanding the hierarchy of conditions necessary for optimal convergence. 
%In the decision tree literature, the balance of a tree is often reduced to its combinatorial aspect (the number of points per cell). 
Our results highlight a key distinction: random split condition, even when coupled with $\alpha$-regularity does not imply shape regularity. %Shape regularity requires that cells do not flatten excessively in any given direction, ensuring that the cell diameter $\text{diam}(\mathcal{V}_N(x))$ decreases isotropically with the depth $N$. In the example studied here, although the algorithm is $\alpha$-regular (each child cell receives a significant fraction of the points), the lack of coordination between the cutting direction $D_i$ and the geometry of the parent cell leads to a geometric drift. 
This directly supports the introduction of a new set of rules to build trees with statistical guarantees, as proposed in the next section.

\subsection{Shape regular tree}\label{s53}

 In this section, we introduce \textit{shape regular trees} (SR trees), as a tree construction that incorporates geometric constraints into the splitting process. It ensures that the resulting partition remains shape-regular, thereby inheriting the optimal convergence properties established previously.

%While the previous section established bounds that adapt to the function's directional smoothness, we now consider the case where geometric constraints are imposed on the cells. Specifically, we study the impact of shape regularity -- a condition ensuring that cells remain somewhat "square" -- on the global convergence rate. More precisely, in this section, 
 %analyze the performance of a general tree under two additional regularity constraints. 
 SR trees are constructed with two important conditions.
 First, a shape regularity condition is imposed to link the diameter of the cells to their volume, ensuring that they do not become too elongated. Second, we require a minimum number of points per leaf, which ensures that each cell remains statistically representative and effectively controls the estimator's variance. 
 We consider general regression trees for which each split is selected using a general cost function. In particular, the deviation inequality obtained below is valid for partitions that may depend on the whole dataset $(X_i,Y_i)_{i=1,\ldots,n }$ and not only on the covariates. %We call these trees ``CART-like'', since CART algorithm is arguably the most important instance of such data-dependent regression trees, due to its wide fame and use in practice. 

Let us introduce a general class of recursive data dependent trees. For simplicity, we assume that  $S_X = [0,1]^d$, as in the previous section. 
For a given cell $V$, a split is characterized by two parameters $(p, u) \in S : = \{ 1,\ldots, d\}\times  (0,1)$. Recall that for a cell $V$, we denote by $h_k(V)$ the (Euclidean) length of its side along coordinate $k$. The resulting left and right child cells, $V(l)$  and $V(r)$, are such that for any $k \neq p$, $h_k(V(l))=h_k(V(r))  = h _k (V) $, and for $k=p$, $ h_k(V(l)) = h_k(V) u$ and $ h_k(V(r)) = h_k(V) (1-u) $. We also recall that $h_-(V) = \min_{k = 1,\ldots, d} h_k(V) $ and $h_+(V) = \max_{k = 1,\ldots, d} h_k(V) $. With these notations, the split condition for $V$ to be $\beta$-shape regular can be expressed with the help of a restriction on the set of valid splits. Given $V$, let us define the set of $\beta$-shape regular splits as follows,
$$ S_\beta(V) : = \{  (p,u)  \in S  \  : \  h_+( V(s)) \leq \beta h_-( V(s)),\,  \forall s  \in \left\{ l,r \right\}   \}. $$
We note that when $\beta \geq 2$, $S_\beta(V)$ cannot be empty. Splitting the largest side in the middle is always in $ S_\beta(V) $. Another restriction on the splits is needed to ensure a sufficient number of points. It is given by
$$ S_m(V) : = \{  (p,u) \in S  \  : \  n P_n^X( V(s) ) \geq m , \, \forall s  \in \left\{ l,r \right\}   \} .$$
We do not need to fully specify the splitting criterion. When $S_m(V)\neq \emptyset$, the split in the cell $V$ is defined as a minimizer -- assumed to exist -- on $S_\beta(V)  \cap S_m(V)$, of a cost function $M_n$, given by
\begin{align*}
    M_n\, : \, S \times \mathcal R ([0,1]^d)  \to &\, \mathbb R \\
    ((p,u), V) \mapsto &\,  M_n ((p,u), V),
\end{align*}
where $ \mathcal R ([0,1]^d)$ is the set of hyper-rectangles included in $[0,1]^d$. In the case where $S_m(V) = \emptyset$, no split is performed and the cell $V$ remains unchanged. The main strength of our analysis lies in the generality of the cost function, which can actually be any function ensuring the existence of a minimizer as required above, and that may depend or not on the sample. For instance, in CART-regression, the cost function depends on the sample and is defined as
$$ M_n ((p,u), V) = \frac{\sum_{i=1}^n (Y_i - \overline Y( V(l))) ^2 \, \ind_{V(l)}(X_i)}{n P_n^X (V(l) )  }  + \frac{\sum_{i=1}^n (Y_i - \overline Y(V(r)))^2 \, \ind_{V(r)}(X_i)}{n P_n^X(V(r) )   } $$
where $ \overline Y(V )= \sum_{i=1}^n Y_i \, \ind_{V}(X_i) / (n P_n^X (V ))$ for any cell $V$. 

\begin{algorithm}[htbp]
\begin{algorithmic}[h]
\Statex{\textbf{Input:} Sample  $(X_i, Y_i)_{i=1,\ldots, n} \subset [0,1]^d \times \mathbb R$, minimal number of points $m\in \{1,\ldots, n\} $, shape-regularity $\beta \geq 2$, cost function $M_n: S \times \mathcal R ([0,1]^d)  \to   \mathbb R$.
Let $V^{(0)} = \{[0,1]^d\} $ be the initial partition, made of one element (i.e. $| V^{(0)}|=1$).}
   \For{$j= 0,1,\ldots$}
\Statex\hspace{\algorithmicindent}{Let $V^{(j+1)} = \emptyset$} denote the partition at step $j+1$. The update is as follows: 
   \For{$k=  1,2,\ldots, | V^{(j)}|$} 
\Statex\hspace{\algorithmicindent}\hspace{\algorithmicindent}
(a) Whenever $S_m(V^{(j)}_k) \neq \emptyset $, define two children, $V(l)$ and $V(r)$, according to
$$\argmin_{(p,u) \in S_\beta(V^{(j)}_k)\cap S_m(V^{(j)}_k) } M_n( (p,u) , V^{(j)}_k) $$
\hspace{\algorithmicindent}\hspace{\algorithmicindent}{ If the above optimization problem has no solution, just pick $p$ as the largest side} 
\Statex\hspace{\algorithmicindent}\hspace{\algorithmicindent}{ and $u = 1/2$. Set $$V^{(j+1)} = \{ V^{(j+1)}, V (l), V (r)\}$$
}
\Statex \hspace{\algorithmicindent}\hspace{\algorithmicindent}{
(b) Whenever $S_m(V^{(j)}_k) = \emptyset $, child is same as parent. Set
  $$V^{(j+1)} = \{ V^{(j+1)}, V^{(j)}_k \}$$
}
\EndFor
\Statex\hspace{\algorithmicindent}{STOP if $ V^{(j+1)} = V^{(j)} $ (no valid split exists)}
   \EndFor
\Statex{Return the final partition elements $V^{(j+1)}$}
\end{algorithmic}
\caption{SR trees}
\label{alg:cart-like}
\end{algorithm}

By splitting on the intersection of $S_\beta$ and $S_m$,  Algorithm \ref{alg:cart-like} ensures that the two conditions are met when growing the tree. The first growing condition, which is the $\beta$-shape regularity of the cell, may not constitute a stopping criterion. Indeed, because $\beta\geq 2$, one can always split at the middle the largest side of the considered cell. The other growing condition on $m$ is easy to check in practice since it amounts to keep a cell as a leaf if and only if the number of data points belonging to that cell is greater than $m$ and strictly smaller than $2m$. As a consequence, one might modify classical algorithms, in the case precisely where the split proposed by the algorithm does not respect the $\beta$-shape-regularity condition for a prescribed value of $\beta$, or the other growing condition asking for sufficiently many points in the cells.

The next theorem gives a deviation inequality on the error associated to the regression map estimator resulting from Algorithm \ref{alg:cart-like}.

\begin{theorem}\label{th_cart_like1}
Let  $\delta \in (0,1/3) $, $n\geq 1$, $d\geq 1$, $\beta\geq 2$ and $m\in \{1,\ldots,  n\}$ such  that $m\geq  4\log(4(2n+1)^{2d}/\delta )$. Suppose that \ref{cond:D}, \ref{cond:density_XCART}, \ref{cond:epsilon} and \ref{cond:reg4} are fulfilled. Let $\mathcal V$ be the local regression map obtained from a CART-like tree with input parameters $\beta $, $m$ and cost function $M_n$, then we have, with probability at least $1-3\delta$, for all $x \in [0,1]^d$,
    \begin{align*} 
            |  \hat g_\mathcal V (x) - g(x)  | \leq \sqrt { \frac{2 \sigma ^2  } {  m}\log\left( \frac{(n+1) ^{2d}}{ \delta } \right)} + L(\mathcal{V}(x))  \beta \sqrt{d} \left(\frac{5 m}{ n b}\right)^{1/d}.
    \end{align*}
\end{theorem}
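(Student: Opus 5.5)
The plan is to apply Theorem \ref{th:general} to the local map produced by Algorithm \ref{alg:cart-like}. The variance term is then controlled by the minimal number of points per leaf. The bias term is controlled through shape regularity together with a volume upper bound, which we obtain from a uniform VC deviation for hyper-rectangles.

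\emph{Step 1: the VC class and the variance term.} Every leaf $\mathcal V(x)$ is a hyper-rectangle of the form $\prod_k (s_k,t_k]$, up to boundary conventions. This class has VC dimension $2d$, so the local map is VC with $v=2d$, even though the partition depends on the full sample $(X_i,Y_i)$. By construction, every leaf produced by a split lies in $S_m$, so $nP_n^X(\mathcal V(x))\geq m$. The root is the exception, but if it is never split then $nP_n^X=n\ge m$. Plugging this into the first term of Theorem \ref{th:general} with $v=2d$ gives $\sqrt{2\sigma^2 m^{-1}\log((n+1)^{2d}/\delta)}$, on an event of probability at least $1-2\delta$.

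\emph{Step 2: upper bound on the leaf volume.} A leaf $V$ is final only when $S_m(V)=\emptyset$. We will show that this forces $nP_n^X(V)<2m+$(small), say $nP_n^X(V)\le 2m+1$. Take the split of the largest side at the empirical median position, or sweep $u$ continuously: since $X$ has a density, points are a.s.\ in general position, so some $u$ leaves at least $\lfloor nP_n^X(V)/2\rfloor$ points on each side. Hence $nP_n^X(V)\le 2m+1\le 3m$.

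We then convert the empirical measure to Lebesgue measure with a uniform relative (multiplicative) VC inequality over hyper-rectangles. This is the same tool that underlies the $(\delta,n)$-large condition and Theorem \ref{th2:general}, with shattering coefficient $(2n+1)^{2d}$ for the symmetrized sample. On an event of probability $1-\delta$, every rectangle $A$ with $nP^X(A)\geq$ const$\cdot\log(4(2n+1)^{2d}/\delta)$ satisfies $P^X(A)\le c\,P_n^X(A)$ for some $c<5/3$. The hypothesis $m\ge 4\log(4(2n+1)^{2d}/\delta)$ is what puts leaves in this regime. If instead $nP^X(A)$ is below the threshold, then $P^X(A)$ is already of order $m/n$. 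Either way, $b\lambda(\mathcal V(x))\le P^X(\mathcal V(x))\le 5m/n$, which gives $\lambda(\mathcal V(x))\le 5m/(nb)$.

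\emph{Step 3: bias term.} Every leaf is $\beta$-SR: it lies in $S_\beta$, or it was created by a fallback middle-split of the largest side, which preserves $\beta$-SR when $\beta\ge 2$ by induction; the root is a cube. Therefore $h_+^d\le \beta^d h_-^d\le \beta^d\lambda$, so $h_+\le \beta\lambda^{1/d}$ and $\diam\le\sqrt d\,h_+\le \beta\sqrt d\,(5m/(nb))^{1/d}$. Combining with Step 1 gives the bound, and a union bound gives total probability $1-3\delta$.

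The main obstacle is Step 2. It requires a relative deviation bound with explicit constants, turning $nP_n^X\le 3m$ into $nP^X\le 5m$ uniformly over data-dependent rectangles. It also requires careful counting, including ties, for the claim that a leaf cannot be split only if it holds fewer than about $2m$ points. We would first try the standard ratio inequality $P(A)-P_n(A)\le\sqrt{P(A)}\,\epsilon$ with $\epsilon^2\asymp\log(\mathbb S(2n)/\delta)/n$ and solve the resulting quadratic in $\sqrt{P(A)}$.
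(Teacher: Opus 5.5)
Your route is the paper's: Theorem~\ref{th:general} with $v=2d$ and $nP_n^X(\mathcal V(x))\ge m$ for the variance term; a count bound on final leaves; the normalized Vapnik inequality over rectangles to reach $P^X(\mathcal V(x))\le 5m/n$; then $b h_-^d\le P^X$ and $h_+\le \beta h_-$ for the diameter; and a union bound giving $1-3\delta$.

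The step that does not go through as written is the constant in Step~2. Once you relax the count to $nP_n^X(V)\le 3m$, neither form of Theorem~\ref{th:vapnik_normalized} gives back $5m$. Write $L=\log(4(2n+1)^{2d}/\delta)$, so the hypothesis says $L\le m/4$. Solving the quadratic from the first inequality exactly yields only $nP^X(V)\le(\sqrt L+\sqrt{L+3m})^2\le \tfrac{(1+\sqrt{13})^2}{4}\,m\approx 5.3\,m$. The cruder second inequality gives $4L+6m\le 7m$. Your regime split does not rescue this. Obtaining $P^X(A)\le cP_n^X(A)$ with $c\le 5/3$, which is what $3m\mapsto 5m$ needs, requires $nP^X(A)\ge 25L$. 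The hypothesis on $m$ does not place leaves in that regime: $nP_n^X\ge m\ge 4L$ says nothing about $25L$. Below that threshold you only know $nP^X(A)<25L\le 6.25\,m$, not $5m$.

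The fix is to keep the sharp count. Your own halving argument shows that a final leaf has $nP_n^X(V)\le 2m-1$, since $\lfloor nP_n^X(V)/2\rfloor\ge m$ would give $S_m(V)\neq\emptyset$. The second inequality of Theorem~\ref{th:vapnik_normalized} then gives $nP^X(V)\le 4L+2nP_n^X(V)\le m+4m$, which is exactly how the paper obtains the $5$.

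A side remark on Step~3: the fallback middle split you invoke need not lie in $S_m$. If it were ever used, the claim $nP_n^X(\mathcal V(x))\ge m$ in Step~1 could fail. The paper avoids this by assuming the constrained minimizer over $S_\beta(V)\cap S_m(V)$ exists, so the fallback never fires. You should rely on the same assumption rather than on the fallback.
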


Note that the conditions on the value of $m$ are satisfied whenever $n$ is sufficiently large and $m\asymp n^a $, 
 for any $a\in (0,1)$. Notice that taking $m \asymp n^{2/(d+2)}$ in the estimation bound of Theorem \ref{th_cart_like1} gives the optimal convergence rate $n^{-1/(d+2)}$, up a multiplicative logarithmic term. Moreover, such a value of $m$ allows the bound to be valid with a probability that grows to one polynomially in $n$, since the constraint $m \geq 4\log(4(2n+1)^{2d}/\delta )$ will be then satisfied.
% and the term $\log(4(2n+1)^{2d}/\delta )$ must stay logarithmic in $n$. 
In addition, such results remain valid for the rate of convergence in sup-norm whenever the density $f_X$ is uniformly bounded from below by a positive constant, independent of $n$. This is stated in the subsequent corollary.

\begin{corollary}\label{cor_cart_like1}
In Theorem \ref{th_cart_like1}, if the integer $m$ is chosen as $m \asymp n^{2/(d+2)} \log((n+1)^{2d}/\delta)^{d/(d+2)}$, then we have the following inequality for $n$ sufficiently large with probability at least $1-3\delta$, 
   \begin{align*}
      \sup_{x\in [0,1]^d} | \hat g_{\mathcal V}(x) - g(x)|  \lesssim c \left( \frac{1}{n} \log\left(\frac{(n+1)^{2d}}{\delta}\right)  \right)^{1/(d+2)},
   \end{align*}
  where $c =   \sqrt{2\sigma^2} + \beta L \sqrt{d} ({5}/ b )^{1/d}$.
\end{corollary}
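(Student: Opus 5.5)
The plan is to plug the choice of $m$ into the bound of Theorem \ref{th_cart_like1} and balance its two terms. Write $\Lambda := \log((n+1)^{2d}/\delta)$ and take $m = \lceil a\, n^{2/(d+2)} \Lambda^{d/(d+2)} \rceil$ for a fixed constant $a>0$; since $m \asymp n^{2/(d+2)}\Lambda^{d/(d+2)}$, the same computation works for any admissible choice up to constants.

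First I check the hypotheses of Theorem \ref{th_cart_like1}. We need $m \in \{1,\ldots,n\}$, which holds for $n$ large because $2/(d+2)<1$ and $\Lambda$ is only logarithmic in $n$. We also need $m \geq 4\log(4(2n+1)^{2d}/\delta)$. The right-hand side is at most a constant multiple of $\Lambda + \log 4 + 2d\log 2$, which grows logarithmically in $n$, while $m$ grows polynomially in $n$. Hence this condition also holds for $n$ sufficiently large, with the threshold depending on $d$, $\delta$ and $a$. This is exactly the ``$n$ sufficiently large'' proviso in the statement.

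Next, on the event of probability at least $1-3\delta$ given by Theorem \ref{th_cart_like1}, I bound both terms uniformly in $x\in[0,1]^d$. Using $L(\mathcal V(x))\leq L$ from \ref{cond:reg4}, the right-hand side no longer depends on $x$, so taking the supremum is free. The variance term is
\[
\sqrt{2\sigma^2\Lambda/m} \lesssim \sqrt{2\sigma^2}\,\bigl(\Lambda\, n^{-2/(d+2)}\Lambda^{-d/(d+2)}\bigr)^{1/2} = \sqrt{2\sigma^2}\,(\Lambda/n)^{1/(d+2)}.
\]
The bias term is
\[
\beta L\sqrt d\,(5m/(nb))^{1/d} \lesssim \beta L \sqrt d\,(5/b)^{1/d}\bigl(n^{-d/(d+2)}\Lambda^{d/(d+2)}\bigr)^{1/d} = \beta L\sqrt d\,(5/b)^{1/d}(\Lambda/n)^{1/(d+2)}.
\]
Adding the two gives the claimed bound with $c=\sqrt{2\sigma^2}+\beta L\sqrt d(5/b)^{1/d}$.

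The only real care point is the constants hidden in $\lesssim$. The ceiling and the constant $a$ enter through $a^{-1/2}$ in the variance term and through $(2a)^{1/d}$ in the bias term, using $\lceil t\rceil\le 2t$ for $t\geq1$. Both are absorbed, since $\lesssim$ allows a constant independent of the problem parameters once $a$ is fixed. No further probabilistic argument is needed, because the event comes entirely from Theorem \ref{th_cart_like1}.
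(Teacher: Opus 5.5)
Your proof is correct and follows essentially the same argument as the paper. The paper writes $m\,a_- \le n^{2/(d+2)}\log((n+1)^{2d}/\delta)^{d/(d+2)} \le a_+\,m$, checks $m \ge 4\log(4(2n+1)^{2d}/\delta)$ for large $n$, and substitutes into both terms of Theorem \ref{th_cart_like1} to get the factor $\sqrt{a_+}/a_-$; it then bounds $L(\mathcal V(x))$ by $L$ before taking the supremum. Your explicit ceiling choice of $m$ and your extra check $m \le n$ are harmless refinements of that same computation.
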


The previous result shows that SR trees are able to attain the optimal rate of convergence as soon as a simple constraint -- restricting acceptable splits by a simple rule -- is imposed during the tree construction. 

Interestingly, results presented in \cite{cattaneo2022pointwise} tend to indicate that such modifications are in general necessary for the classical CART algorithm to achieve a good pointwise -- or uniform -- behavior. More precisely, it is shown in \cite{cattaneo2022pointwise} that the use of CART is problematic for the estimation of a constant regression function,  measured with the sup-norm error. Indeed, its rate of convergence in dimension one is slower than any polynomial of the sample size $n$, with non-vanishing probability. In addition, the honest version of CART -- i.e. when the prediction values among the cells use data that are independent of those used to construct the partition (see Definition 5.1 in \cite{cattaneo2022pointwise}) -- is proved to be inconsistent with positive probability as soon as the tree depth is of order at least $\log(\log(n))$.  This is due to the fact that the splitting criterion produces leaves that are too small.  

Our results complete the picture drawn in \cite{cattaneo2022pointwise} by putting forward the fact that producing too small cells is \textit{the only problem} that can occur with the use of CART in dimension one. Indeed, any cell being $\beta$-shape-regular in dimension one, with $\beta=1$, Theorem \ref{th_cart_like1} shows that the only problem must come from the \rev{amount} of data $m$ in the least populated cell. Indeed, if $m$ is of order $\log(n)$, then our deviation bound in Theorem \ref{th_cart_like1} does not converge to zero when $\delta$ is fixed and the sample size goes to infinity. This is basically what happens in \cite{cattaneo2022pointwise}. In such a case, we are indeed not able to prove the consistency of CART.

\section{Perspective on the anisotropic analysis of trees}\label{sec:perspective}

The efficiency of recursive partitioning algorithms, such as CART, relies on their ability to adapt the geometry of the cells to the local variations of the regression function. To mathematically capture this property, it is necessary to move away from analyses based on a global diameter in favor of a directional approach. We therefore introduce the coordinate-wise Lipschitz regularity assumption, which allows us to distinguish the influence of each variable on the variations of $g$.
\begin{enumerate}[label=(CL), wide=0.5em, leftmargin=*]
\item \label{cond:lip_direc} The regression function $g$ is coordinate-wise Lipschitz, meaning there exist constants $L_1, \dots, L_d \geq 0$ such that, for all $(u, v)$ in $ [0,1]^d$, $$|g(u) - g(v)| \leq \sum_{j=1}^d L_j |u_j - v_j|.$$
\end{enumerate}

For all $j \in \{1, \dots, d\}$, let $h_j(\mathcal{V}(x))$ denote the side length of the cell $\mathcal{V}(x)$ along the $j$-th dimension. We also introduce $L_j(\mathcal{V}(x))$, the local Lipschitz constant of the target function $g$ within the cell $\mathcal{V}(x)$ along the $j$-th direction. Building upon this definition, we claim a refined risk bound that explicitly accounts for the local sample size and the adaptive cell dimensions, providing a sharp characterization of the estimator's performance.
\begin{theorem}\label{CART_pratique}
Let $n\geq 1$ and $\delta \in (0,1/2) $. Under assumptions \ref{cond:epsilon}, \ref{cond:D}, and \ref{cond:lip_direc}, the following inequality holds with probability at least $1 - 2\delta$, for all $x \in [0,1]^d$,
\begin{align*}
       |  \hat g_{\mathcal V}(x) - g(x)| \leq \sqrt{\frac{ 2 \sigma^2 }{ n P_n^X(\mathcal V (x))  }\log\left( \frac{ (n+1) ^{2d} }{\delta} \right) } + \sum_{j=1}^d L_j(\mathcal{V}(x))  h_j(\mathcal{V}(x)).
    \end{align*}
\end{theorem}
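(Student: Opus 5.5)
The plan is to reuse the bias--variance decomposition from Section \ref{s30} and handle the two terms separately, as in the proof of Theorem \ref{th:general}. The only change is in the bias term, where the diameter bound is replaced by a coordinate-wise one.

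\emph{Variance term.} The local map here comes from a tree, so every cell $\mathcal V(x)$ is an axis-aligned hyper-rectangle contained in $[0,1]^d$. It therefore belongs to the fixed class $\mathcal A$ of rectangles $(s,t]$ (or closed or half-open variants), whose VC dimension is $2d$. Sauer's lemma then gives $\mathbb S_{\mathcal A}(n)\le (n+1)^{2d}$. I will apply Theorem \ref{1} twice, once to $(\varepsilon_i)$ and once to $(-\varepsilon_i)$, and combine them with a union bound. This yields, with probability at least $1-2\delta$, simultaneously for all $x$,
\[
\left|\frac{\sum_i \varepsilon_i \ind_{\mathcal V(x)}(X_i)}{\sum_j \ind_{\mathcal V(x)}(X_j)}\right| \le \sqrt{\frac{2\sigma^2}{nP_n^X(\mathcal V(x))}\log\left(\frac{(n+1)^{2d}}{\delta}\right)} .
\]
Dividing the self-normalised bound of Theorem \ref{1} by $\sqrt{nP_n^X(\mathcal V(x))}$ gives exactly this. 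The case $nP_n^X(\mathcal V(x))=0$ is trivial under the convention $0/0=0$. Because the supremum in Theorem \ref{1} is uniform over the deterministic class $\mathcal A$, the bound holds even though the partition may depend on the whole sample. This is the point inherited from Theorem \ref{th:general} that removes any need for honesty.

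\emph{Bias term.} The bias term is a weighted average of $g(X_i)-g(x)$ over the $X_i$ lying in $\mathcal V(x)$. Its absolute value is therefore at most $\sup_{y\in\mathcal V(x)}|g(y)-g(x)|$. Both $x$ and $y$ lie in the rectangle $\mathcal V(x)$, so $|y_j-x_j|\le h_j(\mathcal V(x))$ for each $j$. Using the local coordinate-wise Lipschitz constants, which are defined as the smallest constants for which \ref{cond:lip_direc} holds on $\mathcal V(x)$ and are at most $L_j$, I get $|g(y)-g(x)|\le \sum_j L_j(\mathcal V(x))\,h_j(\mathcal V(x))$. Adding this to the variance bound on the same event gives the claim.

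The main obstacle is not technical. It is to make precise what the local constants $L_j(\mathcal V(x))$ are, so that the bias inequality holds deterministically on every realised cell. I would define $(L_j(V))_j$ as a vector of constants satisfying $|g(u)-g(v)|\le\sum_j L_j(V)|u_j-v_j|$ on $V^2$. I also need the class $\mathcal A$ to contain all realised cells regardless of boundary conventions. Taking the union of the classes of open, closed and half-open rectangles still leaves a VC class of dimension of order $2d$; alternatively, one can assume half-open cells as in Example \ref{ex1}.
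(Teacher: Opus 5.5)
Your proposal is correct and matches the paper's proof. You use the same bias--variance decomposition, and you treat the variance term exactly as in Theorem \ref{th:general}: Theorem \ref{1} applied to $\pm\varepsilon_i$ over the class of hyper-rectangles, with $\mathbb S_{\mathcal A}(n)\le (n+1)^{2d}$. Your bias bound $\sum_j L_j(\mathcal V(x))h_j(\mathcal V(x))$, obtained from $|y_j-x_j|\le h_j(\mathcal V(x))$, is also the paper's.

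Your two side concerns are settled as follows. For the local constants, take $L_j(V)$ to be the Lipschitz constant of $g$ along coordinate $j$ within $V$, and telescope one coordinate at a time inside the rectangle; this gives the needed inequality with $L_j(V)\le L_j$. This is more careful than the paper, whose displayed bias bound uses the global $L_j$. For boundary conventions, any subset of a finite point set cut out by a rectangle is also cut out by its closed bounding box, so the shattering coefficient stays exactly $(n+1)^{2d}$.
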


The previous theorem might be of  practical interest since the deviation bound involves quantities
%aspect of this theorem lies 
%in the use of the empirical measure $P_n^X(\mathcal{V}(x))$, which corresponds exactly to the proportion of training points falling into a given leaf.  
directly computable from the considered leaf: the leaf's sample size and its geometric dimensions. Applying this bound in practice would further require estimating several quantities that are typically unknown. First, the noise variance $\sigma^2$ must be estimated. This has been the object of recent studies \cite{devroye_variance,ramosaj2019consistent}. % -- which can be localized and replaced by $\sigma^2(\mathcal{V}(x))$ to account for potential heteroscedasticity -- 
Second, the directional Lipschitz constants $L_j(\mathcal{V}(x))$ might  be evaluated following local linear methods \cite{fan1996}. %We note that encouraging sp The estimation of  $L_j(\mathcal{V}(x))$ might also be done encouraging sparsity as in \cite{ausset2021nearest}. 
%This approach theoretically justifies CART's greedy strategy, which, by seeking to minimize local impurity, prioritizes reducing the diameters $h_j$ associated with the steep slopes of $g$. By reducing $h_j$ where $L_j$ is large, the algorithm precisely minimizes the bias term, while preserving the variance term by avoiding unnecessary splits. 

    From a theoretical stand point, Theorem \ref{CART_pratique} directly links the geometric structure of the tree's leaves to the tree's statistical performance. %By substituting the global diameter with the weighted sum of side lengths $\sum_{j=1}^d L_j(\mathcal{V}(x)) h_j(\mathcal{V}(x))$, it highlights CART's natural capacity to adapt to the structure of the target function $g$. 
Since the bias error only depends on the directions where the function varies significantly, a good algorithm would afford not to divide the space along dimensions where the Lipschitz constants $L_j$ are null or negligible. This means that the shape of tree cells does not need to be isotropic or ``cubic'' to be efficient. On the contrary, the theorem encourages the use of anisotropic partitions that save splits in non-informative directions, thus preserving a higher count $n P_n^X(\mathcal{V}(x))$ to reduce variance. 

\rev{This suggests the use of splitting rules different from the SR condition in order to build the tree.}
Those rules would reveal cells with small gradient to escape the curse of dimensionality by focusing on the intrinsic dimension of the regression problem. Such directions of research would go beyond the scope of the present paper and are left as interesting questions for future work.

 %Finally, this result provides a theoretical foundation for pruning strategies, as it introduces a stopping rule that carefully weighs the trade-off between the geometric refinement of the partition and the statistical reliability of each leaf.

\newpage

\bibliography{b2}

\newpage
\appendix

\section{Proof of the results stated in Section \ref{s30} and \ref{sectionSR}}

Let $\PP$ be the probability measure on the underlying probability space $(\Omega, \mathcal F)$ on which are defined all introduced random variables.

\subsection*{Proof of Theorem \ref{1}}

Let \( \mathbb P_{X_{1:n}} \) denote the conditional probability given \( X_1, \dots, X_n \). Let $\mathcal V = \{ \mathcal V (x) \, : \, x\in \mathbb R^d\}$ and define
$$ \mathcal G = \{ (\mathds 1 _{ A }(X_1), \dots,\mathds 1 _{ A}(X_n)) \, :\,  A \in \mathcal{A}  \}.$$
With this notation we have 
$$\sup_{x\in \mathbb R^d} \frac{\sum_{i=1}^n \varepsilon_i \mathds 1 _{ \mathcal V(x) }(X_i)}{\sqrt{\sum_{j=1}^n \mathds 1 _{ \mathcal V(x) }(X_j)}} \leq  \sup_{(g_1,\ldots, g_n) \in \mathcal G } \frac{\sum_{i=1}^n \varepsilon_i g_i }{\sqrt{ \sum_{j=1}^n g_j}}.$$
Consequently, for all $t > 0$,
\begin{align*}
   \pr_{X_{1:n}}\left(\sup_{x\in \mathbb R^d} \dfrac{\sum_{i=1}^n \varepsilon_i \mathds 1 _{ \mathcal V(x) }(X_i)}{\sqrt{\sum_{j=1}^n \mathds 1 _{ \mathcal V(x) }(X_j)}} > t  \right) 
   &\leq \pr_{X_{1:n}}\left( \bigcup_{(g_1,\ldots, g_n) \in \mathcal G }  \left\{  \frac{\sum_{i=1}^n \varepsilon_i g_i }{\sqrt{ \sum_{j=1}^n g_j}} > t \right\}  \right) \\ 
   &\leq \sum_{(g_1,\ldots, g_n) \in \mathcal G } \pr_{X_{1:n}}\left(\frac{\sum_{i=1}^n \varepsilon_i g_i}{\sqrt{ \sum_{j=1}^n g_j}} > t  \right).
\end{align*} 
Moreover, since the conditional distribution of $\varepsilon_i$ given $X_1,\ldots,X_n$ is sub-Gaussian with parameter $\sigma^2$, then  $\varepsilon_i g_i$ is sub-Gaussian under $\pr_{X_{1:n}}$, with parameter $\sigma^2 g_i^2$. Hence, by conditional independence given $(X_i)_{i = 1,\ldots, n}$, ${\sum_{i=1}^n \varepsilon_i g_i}/{\sqrt{ \sum_{j=1}^n g_j}}$ is sub-Gaussian with parameter $\sigma^2 \sum_{i=1}^n g_i^2 / \sum_{j=1}^n g_j$. Indeed, $(Y_i) _{i = 1,\ldots, n} $ (and so $(\varepsilon_i) _{i = 1,\ldots, n} $) is an independent collection of random variables, conditionally on $(X_i)_{i = 1,\ldots, n}$. We prove this fact in lemma \ref{lemmaA} below.  Moreover, $\sum_{i=1}^n g_i^2 = \sum_{i=1}^n g_i$ because $g_i\in \{ 0,1\} $. Hence, ${\sum_{i=1}^n \varepsilon_i g_i}/{\sqrt{ \sum_{j=1}^n g_j}}$ is sub-Gaussian with parameter $\sigma^2$ under $\pr_{X_{1:n}}$. Therefore, we obtain     
\begin{align*}
    \pr_{X_{1:n}}\left(\sup_{x\in \mathbb R^d} \dfrac{\sum_{i=1}^n \varepsilon_i \mathds 1 _{ \mathcal V(x) }(X_i)}{\sqrt{\sum_{j=1}^n \mathds 1 _{ \mathcal V(x) }(X_j)}} > t  \right)  &\leq \sum_{(g_1,\ldots, g_n) \in \mathcal G } \exp\left( \dfrac{-t^2}{2\sigma^2}\right)  \leq  \mathbb S_\mathcal V(n) \exp\left( \dfrac{-t^2}{2\sigma^2}\right).
\end{align*}
   If we set $\delta = \mathbb S_\mathcal A(n) \exp\left( {-t^2} /{(2\sigma^2)}\right)$, we have $t = \sqrt{2\sigma^2 \log\left({\mathbb S_\mathcal A(n)}/{\delta}\right)}$. Finally, with probability $\pr_{X_{1:n}}$ at least equal to $1 - \delta$, we get
$$\sup_{x\in \mathbb R^d} \dfrac{\sum_{i=1}^n \varepsilon_i \mathds 1 _{ \mathcal V(x) }(X_i)}{\sqrt{\sum_{j=1}^n \mathds 1 _{ \mathcal V(x) }(X_j)}}  \leq \sqrt{2 \sigma^2 \log\left( \frac{\mathbb S_{\mathcal A}(n)}{\delta} \right)}.$$
   Since $\delta$ is independent of $(X_1,\ldots,X_n)$, we obtain the result by integrating with respect to $(X_1,\ldots,X_n)$. \qed 

\subsection*{Proof of Theorem \ref{th:general}}

    Let \(x \in S_X\). We write the bias-variance decomposition 
$\hat g_{\mathcal V}(x) - g(x) = V + B$, where \begin{align*}
    V := \dfrac{\sum_{i=1}^n \varepsilon_i \mathds 1 _{ \mathcal V(x) }(X_i)}{\sum_{j=1}^n \mathds 1 _{ \mathcal V(x) }(X_j)} \qquad \text{and}\qquad
       B := \dfrac{\sum_{i=1}^n \left(g(X_i) - g(x)\right) \mathds 1 _{ \mathcal V(x) }(X_i)}{\sum_{j=1}^n \mathds 1 _{ \mathcal V(x) }(X_j)}.
\end{align*}
The inequality from Theorem \ref{1} gives, with probability at least $1 - 2\delta$, for all $x \in S_X $,
\begin{align*}
    |V| &\leq \left({\sqrt{\sum_{j=1}^n \mathds 1 _{ \mathcal V(x) }(X_j)}} \right)^{-1} \sup_{x\in \mathbb R^d} \left| \dfrac{\sum_{i=1}^n \varepsilon_i \mathds 1 _{ \mathcal V(x) }(X_i)}{\sqrt{\sum_{j=1}^n \mathds 1 _{ \mathcal V(x) }(X_j)}} \right| \\ &\leq \dfrac{1}{\sqrt{n P_n^X(\mathcal{V}(x))}} \ \sqrt{2 \sigma^2 \log\left( \frac{\mathbb S_\mathcal A(n)}{\delta} \right)} .
\end{align*}
Using the inequality $\mathbb S_\mathcal A(n) \leq (n+1)^v$ we recover the first term of the stated bound. Furthermore, using the triangle inequality, we obtain that
\begin{eqnarray*}
    |B| &\leq& \dfrac{\sum_{i=1}^n \left|g(X_i) - g(x)\right| \mathds 1 _{ \mathcal V(x) }(X_i)}{\sum_{j=1}^n \mathds 1 _{ \mathcal V(x) }(X_j)} \\ &\leq& \dfrac{\sum_{i=1}^n \sup_{y \in \mathcal V(x)} |g(y) - g(x)| \mathds 1 _{ \mathcal V(x) }(X_i)}{\sum_{j=1}^n \mathds 1 _{ \mathcal V(x) }(X_j)} = \sup_{y \in \mathcal V(x)} |g(y) - g(x)|.
\end{eqnarray*}
Moreover, using the Lipschitz assumption, it follows that
$$|g(y) - g(x)| \leq  L(\mathcal V(x)) \|x-y\|_2  \leq L(\mathcal V(x)) \diam  (\mathcal V(x) ) , $$
which concludes the proof. \qed
%\subsection{Proofs related to Section \ref{3.1}}

\subsection*{Proof of Proposition \ref{contre_ex}}

Let $V_0 =  \mathcal V(0) $. Define
$$ W =  \frac{\sum_{i=1} ^n  (Y_i - g(X_i) ) \ind_{ V_0} (X_i)}{\sum_{i=1} ^n    \ind_{V_0 } (X_i)} =  \frac{\sum_{i=1} ^n  \varepsilon_i \ind_{ V_0} (X_i)}{\sum_{i=1} ^n    \ind_{V_0 } (X_i)}$$ and 
 $$B = \frac{\sum_{i=1} ^n  g(X_i)   \ind_{V_0 } (X_i)}{\sum_{i=1} ^n    \ind_{V_0} (X_i)}. $$
Denote by $E$ the event $\left \{ \sum_{i=1} ^n    \ind_{V_0} (X_i) > 0 \right \}$, and the event $\{ \sum_{i=1} ^n    \ind_{V_0} (X_i) = 0  \} $ by $\bar{E}$. We have, since $g(0) = 0$, 
$$\hat g_{\mathcal V}(0) - g(0) = 0 \times \ind_{\bar{E}} + (W + B) \ind_{E} , $$
and since $E$ is $X$-measurable and $\mathbb{E}(\varepsilon | X)=0$, we have
$$ \EE [ (\hat g_{\mathcal V} (0 ) -  g(0 ))^2 ]  = \EE[W^2  \ind_{E} ] + \EE [B^2  \ind_{E} ]. $$ 
According to Lemma \ref{lemmaA}, the variables $(\varepsilon_i)_{i=1,\dots,n}$ are independent conditional on $(X_i)_{i=1,\dots,n}$. Furthermore, given the equality $\EE(\varepsilon|X) = 0$ and the inequality $\EE(\varepsilon^2 | X) \geq \sigma^2_{\text{min}}$, we obtain
$$ \EE [W^2 \ind_{E} |X_1,\ldots, X_n] = \frac{\sum_{i=1} ^n  \EE(\varepsilon^2 | X)  \ind_{ V_0} (X_i)}{\left(\sum_{i=1} ^ n \ind_{ V_0 } (X_i)\right)^2 } \ind_{E} \geq \frac{\sigma^2_{\text{min}}}{\sum_{i=1} ^ n \ind_{ V_0 } (X_i) } \ind_{E}.$$
To obtain a deterministic bound, we integrate the previous lower bound with respect to the distribution of the sample. By definition of the conditional expectation, we have $$\EE \left[ \frac{1}{\sum_{i=1}^n \ind_{V_0}(X_i)} \ind_E\right] = \EE \left[ \frac{1}{\sum_{i=1}^n \ind_{V_0}(X_i)} \;\middle|\; E \right] \PP(E).$$ 
Considering the convexity of the function $\phi : x \mapsto 1/x$ on $\mathbb{R}_+^*$, Jensen's inequality applied to the conditional probability measure $\PP(\cdot | E)$ ensures that
$$\EE \left[ \frac{1}{\sum_{i=1}^n \ind_{V_0}(X_i)} \;\middle|\; E \right] \geq \frac{1}{\EE\left[\sum_{i=1}^n \ind_{V_0}(X_i) \mid E \, \right]}.$$
Under the assumption that the independent variables $X_i$ are uniformly distributed on $[0,1]^d$, the random variable $\sum_{i=1}^n \ind_{V_0}(X_i)$ follows a binomial distribution $\mathcal{B}(n, \lambda(V_0))$. The conditional expectation on the event $E$ is given by $$\EE\left[\sum_{i=1}^n \ind_{V_0}(X_i) \mid E\right] = \frac{\EE\left[\sum_{i=1}^n \ind_{V_0}(X_i) \ind_E\right]}{\PP(E)} = \frac{n \lambda(V_0)}{\PP(E)}$$ since the sum is identically zero on the complementary event $\bar{E}$. Substituting this result into Jensen's inequality, we obtain $$\EE \left[ \frac{1}{\sum_{i=1}^n \ind_{V_0}(X_i)} \ind_E\right] \geq \frac{\PP(E)^2}{n \lambda(V_0)},$$
and finally $$\EE[W^2  \ind_{E} ] \geq \frac{\sigma^2_{\text{min}} \, \PP(E)^2}{n \lambda(V_0)}.$$

Let  $V_1 = \prod_{k=1}^d [h_k/2 , h_k] \subset V_0$. We have, $a_0: = \sum_{i=1} ^n \ind_{V_0} (X_i)\geq \sum_{i=1} ^n \ind_{{ V_1}} (X_i) := a_1$. It follows that
\begin{align*}
   B \ind_{E} \geq \frac{\sum_{i=1} ^n g(X_i) \ind_{V_1} (X_i)}{\sum_{i=1} ^n \ind_{V_0} (X_i)} \ind_{E} 
    &\geq \frac{1}{2} ( h_1+ \dots + h_d ) \frac{a_1}{a_0} \ind_{E} \\ &\geq \frac{c}{2}  \diam_1( V_0) \ind_{a_1\geq c a_0 > 0}
\end{align*}
where the previous inequality is valid for any $c>0$. This implies that, for any $c>0$,
$$ \EE[B^2 \ind_{E}  ] \geq \frac{c^2}{4}  \diam _1 ( V_0) ^2 \,  \PP(  {a_1\geq ca_0 > 0 } ) \geq \frac{c^2}{4}  \diam _2 ( V_0) ^2 \,  \PP(  {a_1\geq ca_0 > 0 } ) .$$
Let us now look for a suitable choice of constant \( c > 0\). From Theorem \ref{lemma=chernoff}, one has that with probability at least $1 - 2\delta = 1/2$,
$$ a_1 \geq \frac{P^X({ V_1})}{P^X( V_0)} \frac{\left( 1 - \sqrt {2\log(4 ) / (n P^X( V_1)) }  \right )}{\left( 1 + \sqrt {3\log(4 ) / (n  P^X({  V_0 })})  \right )} \,  a_0.$$ Furthermore, note that $\lambda({ V_1}) = \prod_{k=1}^d h_k/2 = 2^{-d} \prod_{k=1}^d h_k = 2^{-d} \lambda( V_0)$ and $P^X({  V_k}) = \lambda({  V_k})$ for each  $k \in \{0,1\}$. Note also that we necessarily have $n \prod_{k=1}^d h_k \geq  2^{d+3} \log(4) \geq 3 \times 2^{d+1} \log(4) \geq 3 \times 4 \log(4).$ This ensures also that the numerator is positive. As a consequence, we find that, with probability at least $1/2$,
\begin{align*}
    a_1 &\geq 2^{-d} \, \dfrac{1 - \sqrt{\dfrac{2^{d+1} \log(4)}{n \prod_{k=1}^d h_k}}}{1+ \sqrt{\dfrac{3 \log(4)}{n \prod_{k=1}^d h_k}}} \, a_0 \geq 2^{-d} \dfrac{1 - 1/2}{1 + 1/2} \, a_0 = \dfrac{2^{-d}}{3} a_0 = c \, a_0.
\end{align*}
Moreover, we have $$\PP(a_0 = 0) =  (1-\lambda(V_0))^n \leq  \exp(-n\lambda(V_0)),$$ and by the hypothesis on $n$ we optain $$\PP(a_0 = 0)  \leq \exp(-2^{d+3}\log(4)) =  \left( \dfrac{1}{2} \right)^{2^{d+4}} \leq \dfrac{1}{4}.$$ Then $$\PP(a_1 \geq c a_0 > 0) = \PP(a_1 \geq c a_0) - \PP(a_0 = 0) \geq \dfrac{1}{2} - \dfrac{1}{4} = \dfrac{1}{4}.$$ Furthermore, we have also $\PP(E) = 1- \PP(a_0 = 0) \geq 1- 1/4 = 3/4.$ Thus, we have obtained that
\begin{align*}
    \EE [ (\hat g_{\mathcal V} (0 ) -  g(0 ))^2 ]  &= \EE[W^2 \ind_{E} ] + \EE [B^2 \ind_{E} ] \\ 
    &\geq \frac{9}{16}\dfrac{\sigma^2_{\text{min}}}{n \lambda (  V_0)} + \dfrac{ c ^2}{4} 
     \diam_2 ( V_0)^2 \times \dfrac 1 4  \\
    &\geq \dfrac{1}{16} \left(\dfrac{9\sigma^2_{\text{min}}}{n \lambda (  V_0)} + {(c \gamma)^2} \lambda ( V_ 0)^{2/d} \right) 
\end{align*}
where $\gamma = \overline \gamma ^{1/d} $. Let \( a_1 \) and \( a_2 \) be positive real numbers. By studying the function \( \psi : x \mapsto a_1x^{-d} + a_2 x^2 \) on \(\mathbb R_+^*  \), we notice that \( \psi \) has global minimum achieved at \( x_m = (a_1d/(2a_2))^{1/(d+2)} \). This implies that \begin{align*}
\min_{x > 0} \psi (x) &\geq x_m^2 a_2 \left( \frac{ a_1 } {a_2x_m^{d +2 } }  + 1\right) 
\\ &= \left(\dfrac{a_1d}{2a_2}\right)^{2/(d+2)} a_2 \left( \dfrac{2}{d} +1  \right) 
\\ &= \left(\dfrac{a_2^{d/2}  a_1  d}{2}\right)^{2/(d+2)}  \left( \dfrac{2}{d} +1 \right)
\end{align*}
Now, setting \( a_1 = 9 \sigma^2_{\text{min}} n^{-1} \), \( a_2 = (c \gamma)^2 \), we find \begin{align*}
    \EE [ (\hat g_{\mathcal V} (0 ) -  g(0 ))^2 ] &\geq \dfrac{1}{16}\psi ( \lambda (\mathcal V)^{1/d}) \\ &\geq \dfrac{1}{16}
    \left(\dfrac{9\sigma^2_{\text{min}} d \, (c \gamma)^{d}}{2n}\right)^{2/(d+2)} \left(1 + \dfrac{2}{d}\right) \\  &= C_d ^2 \left(\dfrac{\bar\gamma \sigma^2_{\text{min}}}{n}\right)^{2/(d+2)}
\end{align*} where $C_d =  \sqrt{{2}/{d} + 1} \, \left({ 9d}/{2}\right)^{1/(d+2)} \left( 3 \times 2^d\right)^{-d/(d+2)}/4 \,.$ \qed

\subsection*{Proof of Theorem \ref{th2:general}}
Assume that the maximum of $P_n^X(\mathcal V(x))$ and $P^X(\mathcal V(x))$ is $P^X(\mathcal V(x)).$
We have, by assumption, for all $x\in S_X$,
$$ { n     P^X(\mathcal V(x)) }  \geq 36 \log\left(\dfrac{4 (2n+1) ^v }{\delta} \right). $$
We deduce that
$$ \frac 2 3 \leq 1 -   \sqrt{ \frac{4 \log\left(\dfrac{4 (2n+1) ^v }{\delta} \right) }{n     P^X(\mathcal V(x))} }.$$
Hence, using Theorem \ref{th:vapnik_normalized}, we obtain that with probability $1-\delta$, for all $x\in S_X$,
$$ P_n^X(\mathcal V(x)) \geq P^X(\mathcal V(x)) \left(1-\sqrt { \frac{4 \log( 4 (2n+1) ^v / \delta) }{ n P^X (\mathcal V(x))} } \right)\geq \frac 2 3P^X (\mathcal V(x))  \geq \frac 2 3\ell(x) \lambda (\mathcal V(x) ).$$
Now, if the maximum of $P_n^X(\mathcal V(x))$ and $P^X(\mathcal V(x))$ is $P_n^X(\mathcal V(x))$, then we have
$$ P_n^X(\mathcal V(x) ) \geq P^X(\mathcal V(x) )  \geq \ell(x) \lambda (\mathcal V(x) ).$$ %\geq 2\ell(x) \lambda (\mathcal V(x) )/5
Using Theorem \ref{th:general} and the previous inequality on $ P_n^X(\mathcal V(x) )$ yields the result. \qed

\subsection*{Proof of Proposition \ref{link_beta_gamma}}

Let $A$ be a hyper-rectangle. We use the shortcut $h_- $ and $h_+$ for $h_- (A) $ and $ h_+(A)$, respectively. The first statement is a consequence of 
$ \diam (A) \leq \sqrt{d} h_+ $ 
and 
$ \lambda (A) \geq h_- ^d $, as using $\beta $-shape regularity, we obtain
$$ \diam (A)\leq \sqrt{d} \beta h_- \leq \sqrt{d} \beta \lambda (A) ^{1/d}.$$
    The second statement can be obtained as follows. Since $\diam (A) \geq h_ +$ and $ \lambda (A)^{1/d} \leq h_+ ^{1 -1/d}  h_- ^{1/d}$ we find
 \begin{align*}
      \gamma^{1/d} \geq \frac{ \diam (A)}{ \lambda (A)^{1/d} } \geq \frac{ h_ + }{  h_+ ^{1 -1/d}  h_- ^{1/d} } = \left(\frac{  h_ +}{   h_-  } \right)^{1/d}.
\end{align*}\qed

\subsection*{Proof of Theorem \ref{main_result_localizing_map}}
By assumption, there is $(a_-,a_+) $ such that $ 0< a _- \leq 1\leq   a _+ < +\infty $ and for all $x \in S_X$,
$$ \lambda(\mathcal V(x))  a_-  \leq \left( \frac{\log \left({(n+1)^v}/{\delta} \right)}{n} \right)^{d/(d+2)}\leq a_ + \lambda(\mathcal V(x)).$$
According to Theorem \ref{th2:general}, the $\gamma$-SR assumption, we obtain with probability at least $1 - 3\delta $, for all $x\in S_X$
\begin{align*}
        | \hat g_{\mathcal V}(x) - g(x)| &\leq  \sqrt{\frac{ 3 \sigma^2 \log\left( \frac{(n+1)^v}{\delta} \right) }{n  \ell(x) \lambda ( \mathcal V (x) )   }} + L(\mathcal V(x) ) \diam (\mathcal V(x) ) \\ 
        &\leq  \sqrt{\frac{ 3 \sigma^2 \log\left( \frac{(n+1)^v}{\delta} \right) }{n  \ell(x) \lambda ( \mathcal V (x) )   }} + L(\mathcal V(x) )\gamma^{1/d}  \lambda ( \mathcal V (x) )^{1/d} \\ &\leq  \sqrt{\frac{ 3 \sigma^2   \lambda ( \mathcal V (x) )  ^{(d+2)/d}a_+ ^{(d+2)/d}}{ \ell(x) \lambda ( \mathcal V (x) )   }} + L(\mathcal V(x) )\gamma^{1/d}  \lambda ( \mathcal V (x) )^{1/d} \\ &\leq  \left(  \sqrt{\frac{ 3 \sigma^2 a_+ ^{(d+2)/d} }{ \ell(x)}} + L(\mathcal V(x) )\gamma^{1/d} \right)  \lambda ( \mathcal V (x) )^{1/d} \\ &\leq \left( \sqrt{\frac{ 3 \sigma^2 a_+ ^{(d+2)/d}}{ \ell(x)}} + L(\mathcal V(x) )\gamma^{1/d} \right) 
 \left(\frac{\log\left( \frac{(n+1)^v}{ \delta} \right) }{ n} \right) ^{1/(d+2)} a_- ^{-1/d}.
\end{align*}
The result follows by taking care that $a_+ ^{(d+2)/d} \leq a_+^3 $  and $ a_- ^{-1/d}\leq a_-^{-1} $ which means that the universal constant in the upper bound can be taken as $ a_+^{3/2} / a_- $. \qed

\section{Proof of the results stated in Section \ref{sec_k_nn}.}

\subsection*{Proof of Theorem \ref{th:NN}}

For any $x\in S_X$, define $ \tau(x)^d  = 2 k /  (n \ell(x) ) $  and check that  $\tau(x)^d \leq T_0^d$. Using \ref{cond:density_XNN} we obtain 
   \begin{align*}
    \forall x\in S_X,\qquad   n P^X(B (x, \tau(x)) ) \geq n \ell(x) \tau(x)^d   = 2  k .
   \end{align*}
   Next from Theorem \ref{th:vapnik_normalized}, and using that the set of all balls in $\mathbb R^d$, denoted by $\mathcal{A}$, has Vapnik dimension $d+1$ so that $\mathbb S_\mathcal A(2n)\leq (2n+1)^{(d+1)} $,  we deduce that with probability at least $1-\delta$, for all $x \in S_X,$
   $$n P_n^X (B (x, \tau(x)) ) \geq n P^X(B (x, \tau(x)) ) - \sqrt { n P^X(B (x, \tau(x)) )  4\log( 4 (2n+1)^{(d+1)}  / \delta)   } . $$
   Note that $x\mapsto x - \sqrt{ x\ell} $ is increasing whenever $ x\geq \ell /4$. Since, by assumption on $k$, 
   $$\forall x\in S_X,\quad  n P^X(B (x, \tau(x)) ) \geq  2k \geq 16\log( 4 (2n+1)^{(d+1)}  / \delta) \geq \log( 4 (2n+1)^{(d+1)}  / \delta).   $$
   We obtain that, with probability at least $1-\delta$,
   $$ \forall x\in S_X,\qquad    n P_n^X (B (x, \tau(x)) ) \geq 2k - \sqrt {8k  \log( 4  (2n+1)^{(d+1)}  / \delta)   }  .$$
   Now using again that $k \geq  {  8 \log( 4 (2n+1)^{(d+1)} / \delta)   }  $, we find that with probability at least $1-\delta$
   $$ \forall x\in S_X,\qquad  nP_n^X(B (x, \tau(x)))\geq k .$$
   However, for each $x\in S_X$, $\hat \tau _{n,k}(x) $ is defined as the smallest such value of $\tau $. Therefore, we obtain that for all $x \in S_X$, $ \hat \tau_{n,k} (x)  \leq \tau (x)$. As a consequence, we have shown that, with probability at least $1-\delta$, 
   $$ \forall x\in S_X,\qquad  \hat \tau_{n,k}(x) ^d \leq \frac{ 2 k  }{ n \ell(x)  }.$$
The result then follows from applying Theorem \ref{th:general}. The variance term is obtained just noting that $n P_n^X (\mathcal V (x) ) = k$ and $v = d+1 $ because the local map is valued in the collection of balls which VC dimension is given in \cite{wenocur1981some}. For the bias we use the Lipschitz condition and the inequality above since the \(\ell^2\)-diameter is twice the radius \(\hat \tau_{n,k}(x)\), which gives the upper bound with probability at least \(1 - 3\delta\).

\subsection*{Proof of Corollary \ref{corknn}}

By assumption, there is $(a_-,a_+) $ such that $ 0< a _- \leq 1\leq   a _+ < +\infty $ and $$  k \, a_-  \leq n^{2/(d+2)} \log((n+1)^{d+1}/\delta)^{d/(d+2)} \leq a_ + \, k.$$
When $n$ is large enough, $k$ satisfies $ 8 \log( 4 (2n+1)^{(d+1)} / \delta) \leq k \leq T_0^d n \ell(x) / 2.$
According to Theorem \ref{th:NN}, we have the following inequalities with probability at least $1-3\delta$, for all $x\in S_X$, \begin{align*} | \hat g_{\mathcal V}(x) - g(x)| \quad &\leq \quad \sqrt{\frac{2\sigma^2\log ( {(n+1)^{d+1}/\delta })  a_+ }{ n^{2/(d+2)} \log((n+1)^{d+1}/\delta)^{d/(d+2)} }} \\ &\qquad \quad + \quad  2 L(\mathcal{V}(x)) \left( \frac{2 n^{2/(d+2)} \log((n+1)^{d+1}/\delta)^{d/(d+2)} }{n  \ell(x) a_- } \right)^{1/d} \\ &\leq \quad c \left( \dfrac{\log((n+1)^{d+1}/\delta)}{n}\right)^{1/(d+2)}  \dfrac{\sqrt{a_+}}{a_-}\end{align*} where $c = \sqrt{2\sigma^2} + 2 L(\mathcal{V}(x)) \left(2/\ell(x) \right)^{1/d}$.   Moreover, if $\ell$ is bounded below uniformly on $S_X$ by $b > 0$, we have $c\leq \sqrt{2\sigma^2} + 2 L \left(2/b\right)^{1/d}$. \qed

\section{Proof of the results stated in Section \ref{sec_wag}.}

\subsection*{Proof of Proposition \ref{propWager}}

Let $\delta \in (0,1/3)$. For each $x \in S_X$, let $\mathcal{V}_k(x)$ be the unique cell of depth $k$ containing $x$ and define its two childs $\mathcal{V}_{k+1}(x)$ and $\mathcal{V}_k(x) \backslash \mathcal{V}_{k+1}(x)$. We introduce a regularity constant $\tilde{\gamma} = {\alpha b}/{(8M)}$. According to  Lemma \ref{lemmaWager_new}, under the condition that $16 \log( 4 (2n+1)^{2d} / \delta) \leq n \alpha^{N}$, we have with probability at least $1-2\delta$, for all $x\in [0,1]^d $ and $k \in  \{0,\dots,N-1 \}$, 
$$\tilde{\gamma} \leq \dfrac{\lambda(\mathcal{V}_{k+1}(x))}{\lambda(\mathcal{V}_{k}(x))} \quad \text{and } \quad \tilde{\gamma} \leq \dfrac{\lambda(\mathcal{V}_k(x) \backslash \mathcal{V}_{k+1}(x))}{\lambda(\mathcal{V}_{k}(x))}.$$
At step $k$, suppose the split occurs along the axis $j = D_k(x)$. Since the cell $\mathcal{V}_k(x)$ is a hyper-rectangle, its volume is given by $\lambda(\mathcal{V}_k(x)) = \prod_{i=1}^d h_i(\mathcal{V}_k(x))$, where $h_i(\mathcal{V}_k(x))$ is the length of the $i$-th side at depth $k$ of $\mathcal{V}_k(x)$. As only the $j$-th side is affected by the split, we have  that
$$\dfrac{\lambda(\mathcal{V}_{k+1}(x))}{\lambda(\mathcal{V}_{k}(x))} = \frac{h_j(\mathcal{V}_{k+1}(x))}{h_j(\mathcal{V}_k(x))}$$ and $$\dfrac{\lambda(\mathcal{V}_k(x) \backslash \mathcal{V}_{k+1}(x))} {\lambda(\mathcal{V}_{k}(x))} = \frac{h_j(\mathcal{V}_k(x)) - h_j(\mathcal{V}_{k+1}(x))}{h_j(\mathcal{V}_k(x))} = 1 - \frac{h_j(\mathcal{V}_{k+1}(x))}{h_j(\mathcal{V}_k(x))}.$$  
When $j\neq D_k(x)$, we simply have $ h_j(\mathcal{V}_{k+1}(x)) = h_j(\mathcal{V}_k(x))$. It follows, with probability at least $1-2\delta$, for all $x\in [0,1]^d $ and $k \in  \{0,\dots,N-1 \}$, 
$$\tilde{\gamma} \mathds 1_{j=D_k(x)} + \mathds 1_{j\neq D_k(x)}\leq \frac{h_j(\mathcal{V}_{k+1}(x))}{h_j(\mathcal{V}_k(x))} \leq (1 - \tilde{\gamma})\mathds 1_{j=D_k(x)} + \mathds 1_{j\neq D_k(x)}.$$
The upper bound $h_j(\mathcal{V}_{k+1}(x)) \leq (1 - \tilde{\gamma}) h_j(\mathcal{V}_k(x))$ ensures that the side length strictly decreases by at least a factor $(1-\tilde{\gamma})$ at each split along axis $j$. By induction, if $N_j$ denotes the number of splits performed along the $j$-th coordinate to reach depth $N$, and since the initial side length is $h_j(\mathcal{V}_0(x)) =1$, then, as soon as $16\log( 4 (2n+1)^{2d} / \delta) \leq n \alpha^{N}$, we have with probability at least $1-2\delta$, for all $x\in [0,1]^d$ and $j \in \{1,\dots,d\}$, 
$$h_j(\mathcal{V}_N(x)) \leq (1 - \tilde{\gamma})^{N_j}.$$
Specifically, as soon as $16 \log( 4(2n+1)^{2d} / \delta) \leq n \alpha^{N}$, the maximum side length $h_+$ satisfies with probability at least $1-2\delta$,  for all $x\in [0,1]^d$, 
$$h_{+}(\mathcal{V}_N(x)) \leq (1 - \tilde{\gamma})^{\min_{j} N_j}.$$ 
We denote by $W$ this event. The sequence of chosen axes $(D_i(x))_{i=1,\dots,N}$ is i.i.d. with $\PP(D_i(x)=j)=p_j \ge \pi/d$. In particular, for each axis $j$, the number of splits $N_j$ after $N$ steps is a sum of Bernoulli variables whose expectation satisfies $\mathbb{E}[N_j]=N p_j \ge N\pi/d$. By the multiplicative Chernoff bound, if $Z$ is a sum of independent Bernoulli variables with mean $\mu$, then for $0 < t < 1$:
$$\PP\big(Z \le (1- t)\mu\big) \le \exp\!\left(-\frac{t^2}{2}\mu\right).$$ 
Then, for each coordinate $j \in \{1, \dots, d\}$, we have 
$$\PP\left( N_j \le (1-t) N \frac{\pi}{d}\right) \leq \PP\Big( N_j \le (1-t) \mathbb{E}[N_j] \Big) \le \exp\!\left(-\frac{t^2}{2}N p_j\right)\le \exp\!\left(-\frac{t^2}{2}  \frac{N\pi}{d}\right) .$$
Applying the union bound over the $d$ coordinates, we define the event $\bar{B} $, the complement of $B$, such that 
$$\PP(\bar{B}) = \PP\left( \min_{1\le j\le d} N_j \le (1-t) \frac{N\pi}{d}\right) \leq d \exp\!\left(-\frac{t^2}{2}  \frac{N\pi}{d}\right) .$$
Thus, with probability at least $1 - d\exp\!(-{t^2N\pi }/{(2d)} ) = 1-\eta$, we have
$$\min_{1\le j\le d} N_j \ge (1-t)  \frac{N\pi}{d} =  \frac{N\pi}{d}  - \sqrt{\frac{2N\pi}{d} \log(d/\eta)},$$
by setting $t = \sqrt{2d\log(d/\eta)/(N\pi)}$. Choosing $\eta = \delta$, we find that $W \cap B$ occurs with probability at least $1-3\delta$. That is, as soon as $16\log( 4 (2n+1)^{2d} / \delta) \leq n \alpha^{N}$, we have with probability at least $1-3\delta$, for all $x\in [0,1]^d$,
$$h_{+}(\mathcal{V}_N(x)) \le (1 - \tilde{\gamma})^{\min_j N_j} \le (1 - \tilde{\gamma})^{N \pi/d - \sqrt{2N\pi \log(d/\delta)/d}}.$$
Finally, under the assumption $N \pi \geq 8d \log(d/\delta)$, we obtain with probability at least $1-3\delta$, for all $x\in [0,1]^d$, 
$$h_{+}(\mathcal{V}_N(x))\le (1 - \tilde{\gamma})^{N \pi/(2d)}.$$ \qed

\subsection*{Proof of Theorem \ref{Wager}}

Let $\delta \in (0,1/5)$. We apply Theorem \ref{th:general} to the case of trees (where the general bound holds with probability at least $1-2\delta$). In this setting, the Vapnik-Chervonenkis dimension is $v = 2d$. At depth $N$, the cell is denoted by $\mathcal{V}(x) = \mathcal{V}_N(x)$. By construction, we have $P_n^X(\mathcal{V}_N(x)) \geq \alpha^N$, and the diameter is bounded by $\text{diam}(\mathcal{V}_N(x)) \leq \sqrt{d} h_+(\mathcal{V}_N(x))$, where $h_+(\mathcal{V}_N(x))$ denotes the maximum side length among all $d$ dimensions of the cell $\mathcal{V}_N(x)$ at depth $N$.
Then, with probability at least $1 - 2 \delta $, for all $x\in S_X$,
   % \begin{align*}
   %    &|  \hat g_{\mathcal V}(x) - g(x)| \\
   %    &\leq \sqrt{\frac{ 2 \sigma^2 \log\left( \frac{ (n+1) ^v }%{\delta} \right)}{ n \PP_n(\mathcal V (x))  } } + \sup_{y \in %\mathcal V (x)} |g(y) - g(x)|
    %\end{align*}
        \begin{align*}
       |  \hat g_{\mathcal V}(x) - g(x)| \leq \sqrt{\frac{ 2 \sigma^2 \log\left( \frac{ (n+1) ^{2d} }{\delta} \right)}{ n \alpha^N  } } + L(\mathcal V(x)  ) \, \sqrt{d} \, h_+(\mathcal{V}_N(x)).
    \end{align*}
According to Proposition \ref{propWager}, under the conditions $N \pi \geq 8d \log(d/\delta)$ and $16 \log ( 4 (2n+1)^{2d} / \delta) \leq n \alpha^{N}$, we obtain with probability at least $1-3\delta$, for all $x\in [0,1]^d$, 
$$h_{+}(\mathcal{V}_N(x))\le (1 - \tilde{\gamma})^{N \pi/(2d)}.$$ 
By combining these results, we conclude that the stated theorem holds with probability at least $1 - 5\delta$. \qed

\subsection*{Proof of Theorem \ref{Wager_subopt}}
The proof follows from an application of Proposition \ref{lemmaPZ}, established in Section \ref{sec_sub}, a preliminary section dedicated to the study of blind tree constructions. 

Let $x \in S_X$ and denote by $\mathcal{V}_N(x)$ the cell containing $x$ at step $N$. Let $h_+(\mathcal{V}_N(x))$ and $h_-(\mathcal{V}_N(x))$ denote the maximum and minimum side lengths of the cell $\mathcal{V}_N(x)$, respectively. By using the bounds on $(U_i)_{i=1,\dots,N}$, we have $$ \dfrac{1}{12d} \dfrac{\min_{1 \leq i \leq N} \EE(E_i^2)^2}{\max_{1 \leq i \leq N} \mathbb{E}(E_i^4)} \geq \frac{1}{12d} \left( \frac{\log(1-\rho)}{\log(\rho)} \right)^4,$$ and thus by Proposition \ref{lemmaPZ} 
    \begin{align*}
    \PP\left(\frac{h_+(\mathcal V _ N (x) )}{h_-(\mathcal V_N  (x) )} 
\geq (1-\rho)^{-\sqrt{N/d}} \right) &= 
\PP\left(\frac{h_+(\mathcal V _ N (x) )}{h_-(\mathcal V_N  (x) )} 
\geq \exp\left(\sqrt{\frac{N}{d} {\log(1-\rho)^2}}\right)\right) \\ &\geq \PP\left(\frac{h_+(\mathcal V _ N (x) )}{h_-(\mathcal V_N  (x) )} 
\geq \exp\left(\sqrt{\frac{N}{d} \min_{1 \leq i \leq N} \mathbb{E}(E_i^2)}\right)\right) \\ &\geq  \dfrac{1}{12d} \dfrac{\min_{1 \leq i \leq N} \EE(E_i^2)^2}{\max_{1 \leq i \leq N} \mathbb{E}(E_i^4)} \geq \frac{1}{12d} \left( \frac{\log(1-\rho)}{\log(\rho)} \right)^4.    
\end{align*} 

Furthermore, according to Lemma \ref{reci_wager_new}, by taking $\delta = \left( {\log(1-\rho)}/{\log(\rho)} \right)^4 /(48d)$, we obtain that the tree is, with probability at least $1-2\delta$, $\tilde{\alpha}$-regular with $\tilde{\alpha} = {b\rho}/{(8M)}$ as soon as $16\log( 4 (2n+1)^{2d} / \delta) \leq n (b\rho / M ) ^{N} $. Note that we indeed have $\delta \in (0,1/2)$ since for all $\rho \in (0,1/2], 0 < \delta < (48d)^{-1} < 1/2.$

Thus, with probability at least $\left( \log(1-\rho)/\log(\rho) \right)^4 /(12d) - 2\delta = \left( \log(1-\rho)/\log(\rho) \right)^4 /(24d)$, we have an $\tilde{\alpha}$-regular tree such that the shape regularity factor ${h_+(\mathcal V _ N (x) )}/{h_-(\mathcal V_N (x) )}$ is bounded from below by $(1-\rho)^{-\sqrt{N/d}}.$ Note that this factor diverges as $N \to \infty$. \qed

\subsection*{Proof of Theorem \ref{th_cart_like1}}

The proof follows from a straightforward application of the next result, which is stated for general local regression maps. 

\begin{theorem}\label{th_cart_like}
Let $S_X=[0,1]^d$, $\delta \in (0,1/3) $, $n\geq 1$, $d\geq 1$, and $m\geq 4\log(4(2n+1)^{2d}/\delta )$. Suppose that \ref{cond:D}, \ref{cond:density_XCART}, \ref{cond:epsilon} and \ref{cond:reg4} are fulfilled. Let $\beta \geq 2$ and suppose that $\mathcal V$ is a local regression map valued in the set of hyper-rectangles contained in $S_X$, for all $V\in \left\{ \mathcal V(x) \, :\, x\in \mathbb R^d \right\}$,
    \begin{align*}
        h_+(V) \leq \beta h_- (V)
\quad \text{and}\quad 
       n P_n^X( V) \geq  m,
    \end{align*}
    then we have, with probability at least $1-3\delta$, for all $x \in S_X$,
    \begin{align*} 
            |  \hat g_\mathcal V (x) - g(x)  | \leq \sqrt { \frac{2 \sigma ^2 \log((n+1) ^{2d}/\delta) } {  m}} + L(\mathcal{V}(x))  \beta \sqrt{d} \left(\frac{5 m}{ n b}\right)^{1/d}.
    \end{align*}
    
\end{theorem}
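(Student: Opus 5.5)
The plan is to combine Theorem \ref{th:general}, which gives the variance term directly, with a uniform VC lower deviation bound on the empirical measure of hyper-rectangles, which converts the condition $nP_n^X(V)\geq m$ into a lower bound on the volume. The shape-regularity condition then turns that volume bound into a bound on the diameter.

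\textbf{Variance term.} The local map takes values in the class $\mathcal A$ of hyper-rectangles $(s,t]$ (or closed boxes) in $\mathbb R^d$. This class has VC dimension $2d$. Theorem \ref{th:general} applied with $v=2d$ gives, with probability at least $1-2\delta$, for all $x$,
\[
|\hat g_{\mathcal V}(x)-g(x)|\le \sqrt{\tfrac{2\sigma^2}{nP_n^X(\mathcal V(x))}\log\bigl(\tfrac{(n+1)^{2d}}{\delta}\bigr)}+L(\mathcal V(x))\diam(\mathcal V(x)).
\]
Since $nP_n^X(\mathcal V(x))\ge m$, the first term is at most the stated variance term. For the bias, $\diam(V)\le\sqrt d\,h_+(V)\le \beta\sqrt d\,h_-(V)\le\beta\sqrt d\,\lambda(V)^{1/d}$. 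It therefore suffices to show that, on an event of probability at least $1-\delta$, every cell satisfies $\lambda(V)\le 5m/(nb)$. Equivalently, $nP^X(V)\le Mn\lambda(V)$ has to be compared with $m$.

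\textbf{Volume control.} This step needs care, because the bound $nP_n^X(V)\ge m$ alone only gives a lower bound on the mass. To bound the volume from above I need an upper bound on the number of points in a cell. The plan is to use the construction of Algorithm \ref{alg:cart-like}: a leaf is not split, so $S_m(V)=\emptyset$. In particular, splitting the largest side at the midpoint is not in $S_m(V)$. Hence one of the two halves $H$, which has $\lambda(H)=\lambda(V)/2$ and is itself a hyper-rectangle, contains fewer than $m$ points.

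Strictly speaking, Theorem \ref{th_cart_like} as stated only assumes $nP_n^X(V)\ge m$. So I would either use this leaf property, or read the statement as implicitly requiring that each cell $V$ contain a sub-rectangle $H$ with $\lambda(H)\ge \lambda(V)/2$ and $nP_n^X(H)<m$. I flag this as the main obstacle: pinning down the right hypothesis so that the constant $5$ comes out.

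With this in hand, apply Theorem \ref{th:vapnik_normalized} (the relative VC deviation) to the class of rectangles, with $\mathbb S(2n)\le(2n+1)^{2d}$. With probability at least $1-\delta$, simultaneously for all rectangles $H$,
\[
nP_n^X(H)\ge nP^X(H)-\sqrt{4\,nP^X(H)\log(4(2n+1)^{2d}/\delta)}.
\]
Write $\ell_n=\log(4(2n+1)^{2d}/\delta)\le m/4$ and $p=nP^X(H)$. Suppose $p>5m/2$. Then
\[
p-\sqrt{4p\ell_n}\ge p-\sqrt{pm}=\sqrt p(\sqrt p-\sqrt m)>\tfrac52 m\bigl(1-\sqrt{2/5}\bigr)>m\cdot 0.9,
\]
which is not quite $m$, so the threshold must be adjusted. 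Because the function $p\mapsto p-\sqrt{pm}$ is increasing for $p\ge m/4$, choosing $p>\tfrac{3+\sqrt5}{2}m\approx 2.62\,m$ forces $nP_n^X(H)\ge m$, a contradiction. So $nP^X(H)\le 2.62\,m$.

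Combining with the density lower bound, $b\,n\lambda(V)/2\le nP^X(H)\le 2.62\,m$, hence $\lambda(V)\le 5.24\,m/(nb)$. Reaching exactly $5$ requires a slightly sharper use of the constant $4$ versus $m/4$, namely using $\ell_n\le m/4$ tightly and the fact that a strict inequality fails. I would fine-tune this step, or accept a constant close to $5$.

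\textbf{Conclusion.} Plugging the volume bound into the diameter bound gives the bias term $L(\mathcal V(x))\beta\sqrt d\,(5m/(nb))^{1/d}$. A union bound over the event of Theorem \ref{th:general} (probability $2\delta$) and the VC event (probability $\delta$) yields probability at least $1-3\delta$. Theorem \ref{th_cart_like1} then follows because leaves of Algorithm \ref{alg:cart-like} are $\beta$-SR, contain at least $m$ points, and cannot be split at the middle of their largest side.
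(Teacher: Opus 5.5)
You are right that the hypotheses as written cannot control the diameter: the map $\mathcal V(x)\equiv[0,1]^d$ satisfies both conditions. The paper's proof also imports a property of fully grown trees, namely that every leaf satisfies $m\le nP_n^X(V)\le 2m$.

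Your variance step, your bound $\diam(V)\le\beta\sqrt d\,\lambda(V)^{1/d}$, and your probability accounting ($2\delta+\delta$) coincide with the paper's. The difference is in the volume step.
\begin{itemize}
\item The paper applies the additive form of Theorem \ref{th:vapnik_normalized} to the leaf $V$ itself: $P^X(V)\le \frac4n\log(4(2n+1)^{2d}/\delta)+2P_n^X(V)\le \frac mn+\frac{4m}{n}$. Hence $b\,h_-^d\le P^X(V)\le 5m/n$.
\item You apply the multiplicative lower bound to a half-cell $H$ containing fewer than $m$ points. This only needs one specific split (the midpoint one) to fail; you do not need the order-statistic argument behind $nP_n^X(V)<2m$. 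The price is a factor $2$ from halving.
\end{itemize}

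That factor is where your argument falls short of the statement. It yields $\lambda(V)<(3+\sqrt5)\,m/(nb)$, not $5m/(nb)$, and no fine-tuning closes the difference. The only information you use is $nP_n^X(H)<m$ and $\ell_n\le m/4$. In the admissible worst case $\ell_n=m/4$, the inequality $p-\sqrt{pm}<m$ is exactly equivalent to $p<\frac{3+\sqrt5}{2}m$, so no bound $p\le 2.5m$ can follow. Applying the additive bound to $H$ instead is worse: it gives $P^X(H)<3m/n$, hence the constant $6$.

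To obtain the constant $5$ you need the paper's ingredient: $S_m(V)=\emptyset$ forces $nP_n^X(V)<2m$. Under \ref{cond:density_XCART} the sample coordinates are a.s.\ pairwise distinct. So if $V$ held at least $2m$ points, cutting along any coordinate between the $m$-th and $(m+1)$-th order statistics of those points would give a split in $S_m(V)$. The additive Vapnik bound applied to $V$ then gives $P^X(V)\le 5m/n$.

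As it stands, your proposal proves the theorem with $5$ replaced by $3+\sqrt5$.
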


    Note that,  when growing the tree, the constraint $h_+(V) \leq \beta h_- (V) $ can never be a stopping criterion because one can always select the largest side and split it in the middle. When the tree is fully grown according to the prescribed rules,  acceptable splits are no longer possible. Therefore any $V $ satisfies 
    $$ 2 m \geq   nP_n^X(V) \geq  m.$$
    %This must be satisfied if all splits along $k$ are rejected. 
Since the Vapnik dimension of hyper-rectangles is $ v = 2d$, using Assumption \ref{cond:density_XCART} and Theorem \ref{th:vapnik_normalized}, then for all $\delta \in (0,1)$ and $m\geq 4\log(4(2n+1)^{2d}/\delta )$, we obtain with probability at least $1 - \delta$,
$$b h_-^d \leq P^X(V) \leq \dfrac{4}{n} \log\left(\dfrac{4(2n+1)^{2d}}{\delta} \right) + 2 P_n^X(V) \leq \frac{m}{n} + \frac{4m}{n} = \frac{5m}{n}.$$
 In addition,
$$\diam(V) \leq \sqrt{d} h_+ \leq \sqrt{d} \beta h_- \leq  \sqrt{d} \beta \left(\dfrac{5 m}{n b}\right)^{1/d}.$$
It remains to apply Theorem \ref{th:general} and to use that $n P_n^X(V) \geq m$ for the variance term to get the stated result.
\qed

\subsection*{Proof of Corollary \ref{cor_cart_like1}}
We apply Theorem \ref{th_cart_like1} to the stipulated choice of $m$. By assumption, there exist $(a_-, a_+)$ such that $0 < a_- \leq 1 \leq a_+ < +\infty$ and$$m \, a_- \leq n^{2/(d+2)} \log((n+1)^{2d}/\delta)^{d/(d+2)} \leq a_+ \, m.$$When $n$ is large enough, the choice of $m \asymp n^{2/(d+2)} \log((n+1)^{2d}/\delta)^{d/(d+2)}$ ensures that $m$ satisfies the condition $m \geq 4 \log(4(2n+1)^{2d}/\delta)$ required by Theorem \ref{th_cart_like1}.  According to Theorem~\ref{th_cart_like1}, we have the following inequalities with probability at least $1-3\delta$, for all $x \in S_X$, 
\begin{align*}
| \hat g_{\mathcal V}(x) - g(x)| &\leq \sqrt{\frac{2 \sigma^2 \log((n+1)^{2d}/\delta) a_+}{n^{2/(d+2)} \log((n+1)^{2d}/\delta)^{d/(d+2)}}} \\ &\qquad + L(\mathcal{V}(x)) \beta \sqrt{d} \left( \frac{5 n^{2/(d+2)} \log((n+1)^{2d}/\delta)^{d/(d+2)}}{n b a_-} \right)^{1/d} \\ &\leq \left[ \sqrt{2 \sigma^2 a_+} + L(\mathcal{V}(x))  \beta \sqrt{d} \left( \frac{5}{b a_-} \right)^{1/d} \right] \left( \frac{\log((n+1)^{2d}/\delta)}{n} \right)^{1/(d+2)} \\ &\leq c \left( \dfrac{\log((n+1)^{2d}/\delta)}{n}\right)^{1/(d+2)} \dfrac{\sqrt{a_+}}{a_-}
\end{align*}
where $c = \sqrt{2\sigma^2} + L(\mathcal{V}(x))  \beta  \sqrt{d} (5/b)^{1/d}$. By upper bounding $L(\mathcal{V}(x))$ by $L$, the inequality becomes uniform over $x \in S_X$. Taking the supremum then yields the desired result. \qed

\subsection*{Proof of Theorem \ref{CART_pratique}}
For any $X_i \in \mathcal{V}(x)$, where $i \in \{1, \dots, n\}$, it holds by definition that for each dimension $j \in \{1, \dots, d\}$, $|X_{i,j} - x_j| \leq h_j(\mathcal{V}(x))$ where $h_j(\mathcal{V}(x)) = \sup \{ |u_j - v_j| : u, v \in \mathcal{V}(x) \}$ denotes the side length of the cell along the $j$-th dimension. Under the directional Lipschitz assumption \ref{cond:lip_direc}, the bias term can then be bounded as follows
\begin{align*}\left| \frac{\sum_{i=1}^n (g(X_i) - g(x)) \mathds{1}_{\mathcal{V}(x)}(X_i)}{\sum_{i=1}^n \mathds{1}_{\mathcal{V}(x)}(X_i)} \right| &\leq \frac{\sum_{i=1}^n |g(X_i) - g(x)| \mathds{1}_{\mathcal{V}(x)}(X_i)}{\sum_{i=1}^n \mathds{1}_{\mathcal{V}(x)}(X_i)} \\
&\leq \frac{\sum_{i=1}^n \left( \sum_{j=1}^d L_j h_j(\mathcal{V}(x)) \right) \mathds{1}_{\mathcal{V}(x)}(X_i)}{\sum_{i=1}^n \mathds{1}_{\mathcal{V}(x)}(X_i)} \\
&= \sum_{j=1}^d L_j h_j(\mathcal{V}(x)).\end{align*}
The stated theorem then follows directly from the bias-variance decomposition established in Theorem \ref{th:general}. \qed

\section{Blind tree constructions (a preliminary study to the proof of Theorem \ref{Wager_subopt})} \label{sec_sub}

The proof of Theorem \ref{Wager_subopt} requires some development about blind tree constructions. These tree are characterized by the independence of the split direction and position.

Let $h_{k}(V)$ denote the length of the $k$-th side of a cell $V$. The tree is constructed recursively as follows: at each step $i$, for each terminal leaf $V$, an axis $D_i$ is drawn uniformly from $\{1, \dots, d\}$ and a split position $S_i$ is drawn from a distribution on $(0, 1)$. The cell $V$ is then partitioned along coordinate $k = D_i$ into two daughter cells with respective side lengths $h_{k}(V) S_i$ and $h_{k}(V)(1-S_i)$. Let $\mathcal{V}_i(x)$ denote the cell containing a given point $x$ at step $i$. We define the relative child-to-parent side length ratio as $U_i = h_{D_i}(\mathcal{V}_i(x)) / h_{D_i}(\mathcal{V}_{i-1}(x))$, noting that $S_i$ corresponds to either $U_i$ or $1 - U_i$ depending on which side of the split $x$ falls. Throughout the following, let $E_i = -\log(U_i)$. We assume that the sequence $(D_i)_{i \geq 1}$ is i.i.d., drawn uniformly from $\{1, \dots, d\}$, and that $(U_i)_{i \geq 1}$ is a sequence of random variables independent of $(D_i)_{i \geq 1}$. We first establish a lemma that provides a sufficient condition for the lack of shape regularity in such trees. This result utilizes the Paley–Zygmund inequality to show that the aspect ratio of the cells remains large with positive probability.

\begin{proposition}\label{lemmaPZ}
For any depth $N \geq 1$ and all $x \in S_X$, let $h_+(\mathcal{V}_N(x))$ and $h_-(\mathcal{V}_N(x))$ denote the maximum and minimum side lengths of the cell $\mathcal{V}_N(x)$, respectively. Then,
    \begin{align*}
\PP\left(\frac{h_+(\mathcal V _ N (x) )}{h_-(\mathcal V_N  (x) )} 
\geq \exp\left(\sqrt{\frac{N}{d} \min_{1 \leq i \leq N} \mathbb{E}(E_i^2)}\right)\right) &\geq \dfrac{1}{12d} \dfrac{\min_{1 \leq i \leq N} \EE(E_i ^2)^2}{\max_{1 \leq i \leq N} \mathbb{E}(E_i ^4) }.    
\end{align*} 
\end{proposition}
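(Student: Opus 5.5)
The plan is to reduce the aspect ratio to a signed sum that is centered because the directions are random, and then apply the Paley--Zygmund inequality to its square.

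\textbf{Reduction to a sum.} Since $h_k(\mathcal V_0(x))=1$ and only the chosen side shrinks at each step,
$$\log h_k(\mathcal V_N(x)) = -\sum_{i=1}^N E_i\,\ind_{\{D_i=k\}}.$$
Hence, for the two fixed coordinates $1$ and $2$ (assuming $d\geq 2$; for $d=1$ the statement is degenerate),
$$\log\frac{h_+(\mathcal V_N(x))}{h_-(\mathcal V_N(x))}\;\geq\; |Z|,\qquad Z:=\sum_{i=1}^N \xi_i E_i,\quad \xi_i:=\ind_{\{D_i=2\}}-\ind_{\{D_i=1\}}.$$
It therefore suffices to bound $\PP\bigl(Z^2\geq N m_2/d\bigr)$ from below, where $m_2=\min_i\EE(E_i^2)$ and $M_4=\max_i\EE(E_i^4)$.

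\textbf{Properties of the $\xi_i$.} The $\xi_i$ are i.i.d. and independent of $(U_i)$, since the directions are i.i.d. uniform and independent of the positions. They are symmetric, with $\EE\xi_i=\EE\xi_i^3=0$ and $\EE\xi_i^2=\EE\xi_i^4=2/d$. The $E_i$ themselves need not be independent of one another; only the independence of $(\xi_i)$ from $(E_i)$ is used.

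\textbf{Second moment.} Conditioning on $(U_i)$, every cross term $\EE[\xi_i\xi_jE_iE_j]$ with $i\neq j$ factorizes as $\EE\xi_i\,\EE\xi_j\,\EE[E_iE_j]=0$. Therefore
$$\EE Z^2=\frac{2}{d}\sum_{i=1}^N\EE(E_i^2)\;\geq\;\frac{2Nm_2}{d}.$$

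\textbf{Fourth moment.} Expanding $\EE Z^4$, only the index patterns in which every index appears an even number of times survive. This leaves the diagonal terms $\sum_i \EE\xi_i^4\,\EE E_i^4$ and the paired terms $3\sum_{i\neq j}\EE\xi_i^2\,\EE\xi_j^2\,\EE[E_i^2E_j^2]$. By Cauchy--Schwarz, $\EE[E_i^2E_j^2]\leq M_4$, which gives
$$\EE Z^4\;\leq\; M_4\Bigl(\frac{2N}{d}+\frac{12N^2}{d^2}\Bigr).$$

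\textbf{Conclusion via Paley--Zygmund.} Apply the Paley--Zygmund inequality to $Z^2$ with parameter $1/2$:
$$\PP\Bigl(Z^2\geq \tfrac12\EE Z^2\Bigr)\;\geq\;\frac14\,\frac{(\EE Z^2)^2}{\EE Z^4}\;\geq\;\frac{m_2^2}{M_4}\cdot\frac{N}{2d+12N}.$$
On this event, $Z^2\geq Nm_2/d$, so $h_+/h_-\geq e^{|Z|}\geq \exp\bigl(\sqrt{Nm_2/d}\bigr)$. Finally, $N/(2d+12N)\geq 1/(14d)$ for all $N,d\geq 1$, which yields a bound of the stated form with constant $1/(14d)$.

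\textbf{Expected obstacle.} The only delicate point is the combinatorial bookkeeping in the fourth moment and the resulting constant. Reaching exactly $1/(12d)$ may require a sharper treatment of the diagonal term, for example keeping $\EE\xi^4=2/d$ separate and absorbing it more carefully, or a slightly different Paley--Zygmund parameter. The structure of the argument does not depend on this; only the numerical constant does.
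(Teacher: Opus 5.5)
Your argument is the paper's proof: the same reduction to $Z=Z_{1,2}$, the same second- and fourth-moment computations using independence of the directions from the $E_i$, and Paley--Zygmund applied to $Z^2$.

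The paper tunes the Paley--Zygmund parameter through an auxiliary $\epsilon$. That reaches the same final bound as your choice $\theta=1/2$ combined with its cruder estimate $\EE Z^4\le \frac{2NM_4}{d}\bigl(1+\frac{6(N-1)}{d}\bigr)\le \frac{12N^2M_4}{d}$. That combination gives exactly $m_2^2/(12dM_4)$.

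The only shortfall is your last line. The bound $N/(2d+12N)\ge 1/(14d)$ throws away more than necessary and leaves you with a weaker constant than the statement claims. Your reduction already requires $d\ge 2$, and $N/(2d+12N)$ is increasing in $N$. Hence $N/(2d+12N)\ge 1/(2d+12)\ge 1/(12d)$, since $12d\ge 2d+12$ for $d\ge 2$. So your own sharper fourth-moment bound already proves the stated inequality with constant $1/(12d)$, with some room to spare, and the ``expected obstacle'' does not arise.

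For $d=1$, the case is not merely degenerate. There $h_+=h_-$, so the left-hand probability is $0$ whenever $\min_i\EE(E_i^2)>0$, while the right-hand side is positive. The statement is therefore false for $d=1$.

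The paper's proof also tacitly assumes $d\ge 2$. Its $V_i^{k,j}$ equals $0$ with probability $1-2/d$, and it works with $Z_{1,2}$. Restricting to $d\ge 2$ is therefore necessary, not a convenience.
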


The proof is given at the end of the section. Thus, when the second and fourth moments of $(E_i)_{i=1, \dots N}$ are uniformly bounded in $N$ -- this means that, approximately, the $(U_i)_{i=1, \dots, N}$ are on average far from 0 and 1 as $N$ grows large -- the associated tree is not shape regular, which is the subject of the next corollary.

\begin{corollary}\label{cor_moments}
Let $x \in S_X$. Suppose there exist constants $c_1 > 0$, $c_2 > 0$ such that for all $N \geq 1$, $\min_{1 \leq i \leq N} \EE(E_i^2) \geq c_1$ and $\max_{1 \leq i \leq N} \mathbb{E}(E_i^4) \leq c_2$, then with strictly positive probability, the ratio $h_+(\mathcal V_N (x) ) / h_-(\mathcal V_N (x) )$ is bounded below by $\exp(\sqrt{N c_1 /{d}})$. Therefore, the associated tree is not shape regular.
\end{corollary}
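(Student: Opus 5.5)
The corollary is a direct consequence of Proposition \ref{lemmaPZ}, so the plan is to plug the two moment bounds into it and check that the resulting probability does not vanish with $N$.

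Fix $x\in S_X$ and $N\geq 1$. Under the hypotheses, $\min_{1\le i\le N}\EE(E_i^2)\ge c_1$. Since $t\mapsto \exp(\sqrt{Nt/d})$ is increasing, we get the event inclusion
$$\left\{\frac{h_+(\mathcal V_N(x))}{h_-(\mathcal V_N(x))}\geq \exp\left(\sqrt{\frac{N}{d}\min_{1\le i\le N}\EE(E_i^2)}\right)\right\}\subset\left\{\frac{h_+(\mathcal V_N(x))}{h_-(\mathcal V_N(x))}\geq \exp\left(\sqrt{\frac{N c_1}{d}}\right)\right\}.$$
Proposition \ref{lemmaPZ} bounds the probability of the left event from below by $\min_i\EE(E_i^2)^2/(12d\max_i\EE(E_i^4))$. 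This is at least $c_1^2/(12 d c_2)>0$, uniformly in $N$. Positivity holds because $c_2\geq \max_i \EE(E_i^4)\geq \EE(E_1^2)^2\geq c_1^2>0$ by Jensen's inequality, so the ratio is well defined and lies in $(0,1]$.

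For the conclusion about shape regularity, argue by contradiction. Suppose the tree were $\beta$-SR, or equivalently $\gamma$-SR by Proposition \ref{link_beta_gamma}, for some fixed $\beta$ not depending on $N$. Then $h_+/h_-\le\beta$ almost surely for every $N$. But $\exp(\sqrt{Nc_1/d})\to\infty$, so once $\exp(\sqrt{Nc_1/d})>\beta$ the event above, which has probability at least $c_1^2/(12dc_2)>0$, forces $h_+/h_->\beta$. This is a contradiction. Hence no uniform shape-regularity constant exists, and the aspect ratio diverges with positive probability bounded away from zero.

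There is no real obstacle here. The only care needed is the monotonicity step and the remark that the lower bound on the probability is uniform in $N$. The latter is what makes the statement "with strictly positive probability" meaningful as $N$ grows.
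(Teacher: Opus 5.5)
Your proposal is correct and matches the paper's proof, which simply plugs $\min_{i}\EE(E_i^2)\ge c_1$ and $\max_{i}\EE(E_i^4)\le c_2$ into Proposition~\ref{lemmaPZ} to get probability at least $c_1^2/(12dc_2)$. Your event-inclusion step, the Jensen check that $c_1^2\le c_2$, and the contradiction argument showing that shape regularity fails only spell out what the paper leaves implicit.
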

\begin{proof}
We apply the previous lemma \ref{lemmaPZ} by bounding the moments from below and above using the assumptions of the proposition. We then obtain
    \begin{align*}
\PP\left(\frac{h_+(\mathcal V _ N (x) )}{h_-(\mathcal V_N  (x) )} 
\geq \exp\left(\sqrt{\frac{N}{d} c_1 } \right)\right) &\geq \dfrac{1}{12d} \dfrac{c_1^2}{c_2}.    
\end{align*}    
\end{proof}

We also have the following corollary, which is of interest in the case of purely random trees.

\begin{corollary}\label{cor_PRT}
Let $x \in S_X$. If the random variables $(S_i)_{i=1,\dots,N}$ representing cut sizes follow the same distribution, symmetric around $1/2$ and satisfy $\mathbb{E}(\log(S_1)^4) < + \infty$, then with strictly positive probability, the ratio $h_+(\mathcal{V}_N(x)) / h_-(\mathcal{V}_N(x))$ is bounded below by $\exp\left( \sqrt{N \sqrt{\mathbb{E}(\log(S_1)^2)} / d} \right)$. Hence, the associated tree is not shape regular.
\end{corollary}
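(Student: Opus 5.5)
The plan is to derive the statement from Proposition \ref{lemmaPZ}, in the same way as Corollary \ref{cor_moments}. The difference is that the moments of $E_i=-\log U_i$ now have to be expressed through the common law of the cut sizes $S_i$. Proposition \ref{lemmaPZ} gives
\[
\PP\Big(\tfrac{h_+}{h_-}\ge \exp\big(\sqrt{\tfrac Nd \min_i\EE(E_i^2)}\big)\Big)\ \ge\ \tfrac{1}{12d}\,\tfrac{\min_i\EE(E_i^2)^2}{\max_i\EE(E_i^4)} .
\]
Two bounds are therefore needed, uniform in $i$ and $N$:
\[
\min_i \EE(E_i^2)\ \ge\ \sqrt{\EE(\log(S_1)^2)} \qquad\text{and}\qquad \max_i \EE(E_i^4)\ \le\ C<\infty .
\]
Since $h_+/h_-$ is monotone in the threshold, these give the claimed exponent $\sqrt{N\sqrt{\EE(\log(S_1)^2)}/d}$. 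They also give the explicit positive probability $c^2/(12\,d\,C)$ with $c=\sqrt{\EE(\log(S_1)^2)}$.

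\textbf{Step 1: the upper bound.} This step is routine. $U_i$ equals either $S_i$ or $1-S_i$, so $E_i^4\le \log(S_i)^4+\log(1-S_i)^4$. By symmetry $1-S_i\overset{d}{=}S_1$, hence $\EE(E_i^4)\le 2\,\EE(\log(S_1)^4)=:C<\infty$ for every $i$.

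\textbf{Step 2: the lower bound.} Condition on the past of the construction. Let $t\in(0,1)$ be the relative position of $x$ along the direction $D_i$ inside $\mathcal V_{i-1}(x)$; since $x$ is fixed, $t$ does not depend on $S_i$. Then $U_i=S_i$ on $\{S_i>t\}$ and $U_i=1-S_i$ on $\{S_i\le t\}$. Using symmetry on the second event gives
\[
\EE(E_i^2\mid t)=\EE\big[\log(S_1)^2\,(\ind_{S_1>t}+\ind_{S_1\ge 1-t})\big]\ \ge\ \EE\big[\log(S_1)^2\,\ind_{S_1\ge 1/2}\big]+\EE\big[\log(S_1)^2\,\ind_{S_1> \max(t,1-t)}\big].
\]
This reduces the problem to comparing $\EE(E_i^2)$ with $\EE(\log(S_1)^2)$ through its restriction to the larger piece, where $\log(S_1)^2\ge(\log 2)^2\,\ind_{S_1\le1/2}$ is also available. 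The square root in the target suggests a Cauchy--Schwarz step: write $\EE(\log(S_1)^2)=\EE[|\log S_1|\cdot|\log S_1|]$ and split according to the side on which $x$ falls, so that one factor is controlled by $\EE(E_i^2)$ and the other by a quantity of order one.

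\textbf{Main obstacle.} The comparison in Step 2 is the step I expect to be hardest. I would not rely only on the threshold of Proposition \ref{lemmaPZ}, since that threshold involves $\EE(E_i^2)$ directly. I would also reopen its Paley--Zygmund argument for $Z=(L_j-L_k)^2$, where $L_j=\sum_i E_i\ind_{D_i=j}$.

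In that argument the cross terms vanish, because $\ind_{D_i=j}-\ind_{D_i=k}$ is centred and independent of the $E$'s. This gives
\[
\EE Z=\tfrac{2}{d}\sum_{i\le N}\EE(E_i^2) .
\]
The factor $2$ here is slack that can be traded, through the Paley--Zygmund level $\theta$, against the constant lost in the comparison of Step 2. This lets the exponent reach the stated $\sqrt{\EE(\log(S_1)^2)}$ while the probability stays bounded below by a constant depending only on $d$ and the first two even moments of $\log S_1$.

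Once that bound holds, the conclusion follows as in Corollary \ref{cor_moments}. The aspect ratio diverges with $N$ with probability bounded away from zero, so the tree is not shape regular.
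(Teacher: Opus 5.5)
Your Step 1 is correct whichever side $x$ falls on: since $U_i\in\{S_i,1-S_i\}$, $\EE(E_i^4)\le 2\,\EE(\log(S_1)^4)$. The gap is Step 2, and it cannot be closed. When $U_i$ is the relative length of the piece containing the \emph{fixed} point $x$, $\EE(E_i^2)$ admits no lower bound in terms of $\EE(\log(S_1)^2)$. Your own formula shows why: at $t=1/2$ it gives $\EE(E_i^2\mid t)=\EE[\log(\max(S_1,1-S_1))^2]$, which only sees the larger piece. Concretely, take $S_1$ uniform on $\{\epsilon,1-\epsilon\}$ and $x=(1/2,\dots,1/2)$. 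As long as $x$ is in the larger piece, each cut moves its relative position in the cut direction by at most $\epsilon/(1-\epsilon)$. So for $N<1/(4\epsilon)$ and $\epsilon$ small, $x$ always stays in the larger piece, $E_i=-\log(1-\epsilon)$ for all $i\le N$, and $h_+/h_-\le(1-\epsilon)^{-N}\le e^{2\epsilon N}<e^{1/2}$. But $\EE(\log(S_1)^2)\ge\frac12\log(\epsilon)^2$, so the claimed threshold exceeds $e^{1/2}$ once $|\log\epsilon|>\sqrt2\,d/4$, and the event has probability $0$. The ratio $\EE(\log(S_1)^2)/\EE(E_i^2)$ is unbounded here, so neither a Cauchy--Schwarz step nor the factor-$2$ slack in Paley--Zygmund can absorb it. Moreover, once the law of $U_i$ depends on $t$, and $t$ depends on $D_i$, the independence of $(U_i)$ and $(D_i)$ that Proposition \ref{lemmaPZ} requires is itself in doubt.

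The paper's proof avoids this in one line. It asserts that, by the assumptions on $(S_i)$, the $U_i$ share a common law (implicitly that of $S_1$), so $\EE(E_i^2)=\EE(\log(S_1)^2)$ and $\EE(E_i^4)=\EE(\log(S_1)^4)$ do not depend on $i$, and then it invokes Corollary \ref{cor_moments}. In effect it treats the side on which $x$ falls as independent of $S_i$. That identification is the missing ingredient, and the example above shows it has to be imposed as an assumption rather than derived.

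Once you adopt it, your argument works and fills a detail the paper skips. Step 2 becomes the identity $\EE(E_i^2)=c:=\EE(\log(S_1)^2)$. Proposition \ref{lemmaPZ} alone then gives the threshold $\exp(\sqrt{Nc/d})$, which implies the stated one only when $c\ge 1$. Your first displayed target $\min_i\EE(E_i^2)\ge\sqrt c$ also fails, already for $S_1\equiv 1/2$.

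Your Paley--Zygmund slack handles the case $c<1$. Since $\log(s)^2+\log(1-s)^2$ is minimised at $s=1/2$, we have $c\ge(\log 2)^2$, so $\theta=1/(2\sqrt c)\in(0,1)$. With $\EE Z_{1,2}^2=2Nc/d$ and $\EE Z_{1,2}^4\le 12N^2\,\EE(\log(S_1)^4)/d$, this gives $\PP(|Z_{1,2}|\ge\sqrt{N\sqrt c/d})\ge(1-\theta)^2c^2/(3d\,\EE(\log(S_1)^4))$.
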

\begin{proof}
    By assumption on the random variables $(S_i)_i$, the $(U_i)_i$ share the same distribution, so the minimum and maximum of the moments of the $(E_i)_i$ are constant.
\end{proof}

This geometric divergence confirms that purely random splitting rules lack the necessary adaptivity to balance the cell diameters across all dimensions. As established in Proposition \ref{contre_ex}, such an unbalanced structure is insufficient to capture the local variations of Lipschitz functions at the minimax optimal rate. This highlights the importance of having the cutting directions or cut sizes evolve over time to take into account the geometry of the cell. This is the case in Mondiran's tree where the probability to split according to a direction is proportional to its length.

\subsubsection*{Proof of Proposition \ref{lemmaPZ}}
For a given leaf, after $N$ stages,  the $k$-th length has the following representation 
$$ h _ k(\mathcal V (x) ) = U_1^{B_1^{(k)}}\times \ldots \times U_N^{B_N^{(k)}} = \exp\left( \sum_{i=1} ^ N B_i^{(k)} \log(U_i) \right)  $$
where $B_i^{(k)}  = \ind_{D_i = k }$. It follows that
\begin{align*}
 &h_+(\mathcal V_N (x) ) = \exp\left( \max_{k=1,\ldots, d}  \sum_{i=1} ^ N B_i^{(k)} \log(U_i) \right), \\
 &h_-(\mathcal V_N (x) ) =  \exp\left(\min_{k=1,\ldots, d}  \sum_{i=1} ^ N B_i^{(k)} \log(U_i) \right), 
\end{align*}

and the expression of the ratio is
\begin{align*}
 h_+(\mathcal V_N (x) ) / h_-(\mathcal V_N (x) )  &= \exp \left(   \max_{1 \leq k,j \leq d} \sum_{i=1} ^N (B_i^{(k)} - B_i^{(j)} ) E_i \right) \end{align*}
where $E_i = - \log(U_i)$.

By denoting $V_i^{k,j} = B_i^{(k)} - B_i^{(j)}$, we get
\[V_i^{k,j} = \begin{cases} 
1 & \text{with probability } 1/d \\
0 & \text{with probability } 1 - 2/d \\
-1 & \text{with probability } 1/d 
\end{cases}.\]
Note that the variables \((V_i^{k,j})_{i=1,\dots,N}\) are mutually independent because the \((D_i)_{i=1,\dots,N}\) are independent. Furthermore, since the \(U_i\)'s are independent of the \(V_i\)'s, the \(V_i^{k,j}\)'s are independent of the \(E_i\)'s. Let \(Z_{k,j} = \sum_{i=1}^{N} V_i^{k,j} E_i\) such that
\[
\frac{h_+(\mathcal V_N (x) )}{h_-(\mathcal V_N (x) )} = \exp \left( \max_{1 \leq k,j \leq d} Z_{k,j} \right).
\]
Note that \(Z_{k,j} = - Z_{j,k}\) and \(Z_{k,k} = 0\), which gives
\[
\max_{1 \leq k,j \leq d} Z_{k,j} = \max_{1 \leq k < j \leq d} | Z_{k,j} |
\]
and thus the formula 
\[
\frac{h_+(\mathcal V_N (x) )}{h_-(\mathcal V_N (x) )} = \exp \left( \max_{1 \leq k < j \leq d} | Z_{k,j} | \right).
\]
By using the Paley-Zygmund inequality to $Z_{k,j}^2$, we get for all $\theta \in (0,1)$,
\[
\PP\left(|Z_{k,j}| \geq \sqrt{\theta} \sqrt{\mathbb{E}(Z_{k,j}^2)} \right) \geq (1-\theta)^2 \frac{\mathbb{E}(Z_{k,j}^2)^2}{\mathbb{E}(Z_{k,j}^4)}.
\]
We therefore seek to calculate the 2nd and 4th moments of \(Z_{k,j}\). \\ Since \(Z_{k,j}^2 = \sum_{i \neq \ell} V_i^{k,j} V_\ell^{k,j} E_i E_\ell + \sum_{i=1}^N V_i^{k,j \, 2} E_i^2 \), \(\mathbb{E}(V_i^{k,j}) = 0\) and by independence of the \((V_i^{k,j})_i\) from each other and from the \((E_i)_i\), we obtain 
$$\mathbb E(Z_{k,j}^2) = \sum_{i=1}^N \mathbb{E}((V_i^{k,j})^2) \mathbb{E}(E_i^2) = \frac{2}{d} \sum_{i=1}^N \mathbb{E}(E_i^2) \geq \frac{2N a(N)}{d}$$
where $a(N) = \min_{1 \leq i \leq N} \EE(E_i^2).$ 

Moreover, $(V_i^{k,j})_i$ are independent of each other and of $(E_i)_i$, with $\mathbb{E}[V_i^{k,j}] = \mathbb{E}[(V_i^{k,j})^3] = 0$, thus, we obtain
\begin{eqnarray*}
 \mathbb{E}(Z_{k,j}^4) &=& \mathbb{E} \left[ \left( \sum_{i=1}^N V_i^{k,j} E_i \right)^4 \right]
\\ &=& \sum_{i=1}^N \mathbb{E}\left[ \left(V_i^{k,j}\right)^4 \right]\, \mathbb{E}[E_i^4]
+ 6 \sum_{1 \leq i < j \leq N} \mathbb{E}\left[ \left(V_i^{k,j}\right)^2 \right]\, \mathbb{E}\left[ \left(V_j^{k,j}\right)^2 \right]\, \mathbb{E}[E_i^2 E_j^2] \\ &=&   \frac{2}{d} \sum_{i=1}^N \mathbb{E}[E_i^4]
+ 6 \left(\frac{2}{d}\right)^2 \sum_{1 \leq i < j \leq N} \mathbb{E}[E_i^2 E_j^2] \\ &\leq& \frac{2Nb(N)}{d} +   3 \left(\frac{2}{d}\right)^2 N(N-1) c(N)
\end{eqnarray*}
where $b(N) = \max_{1 \leq i \leq N} \mathbb{E}[E_i^4] $ and $c(N) = \max_{1 \leq i < j \leq N} \mathbb{E}[E_i^2 E_j^2]$. By Cauchy-Schwarz inequality, $c(N) \leq b(N).$ Then, $$ \mathbb{E}(Z_{k,j}^4) \leq \frac{2Nb(N)}{d} \left( 1 + \frac{6}{d} (N-1) \right) \leq \frac{2Nb(N)}{d}  \times 6N.$$

Let $\epsilon $ such that $$(1-\theta)^2 \frac{\mathbb{E}(Z_{k,j}^2)^2}{\mathbb{E}(Z_{k,j}^4)} = \frac{1}{\epsilon^2}$$ i.e. $$\theta = 1 - \frac{\sqrt{\mathbb{E}(Z_{k,j}^4)}}{\epsilon \, \mathbb{E}(Z_{k,j}^2) }.$$

By using the Paley-Zygmund inequality to $Z_{k,j}^2$, 
\[
\PP\left(|Z_{k,j}| \geq  \sqrt{\mathbb{E}(Z_{k,j}^2) - \frac{1}{\epsilon } {\sqrt{\mathbb{E}(Z_{k,j}^4)}}} \right) \geq \frac{1}{\epsilon^2}.
\]

Then, 
\[
\PP\left(|Z_{k,j}| \geq  \sqrt{\frac{2Na(N)}{d} - \frac{1}{\epsilon } {\sqrt{\frac{2Nb(N)}{d}  \times 6N}} }\right) \geq \frac{1}{\epsilon^2}
\]
and \[
\PP\left(|Z_{k,j}| \geq  \sqrt{\frac{2N}{d} ({a(N)} - \frac{1}{\epsilon } {\sqrt{3b(N)d}}} ) \right) \geq \frac{1}{\epsilon^2}.
\]
Let $\epsilon = 2\sqrt{3b(N)d}/a(N)$ i.e. $\epsilon^2 = 12d b(N)/a(N)^2$, then
 \[
\PP\left(|Z_{k,j}| \geq  \sqrt{\frac{Na(N)}{d} } \right) \geq \dfrac{a(N)^2}{12d b(N)}.
\]

Finally, by the following lower bound,
\[
\frac{h_+(\mathcal V_N (x) )}{h_-(\mathcal V_N (x) )} = \exp \left( \max_{1\leq k < j \leq d } |Z_{k,j}| \right) \geq \exp (|Z_{1,2}|),
\]
we get, for any $N \geq 1$, 
\begin{align*}
\PP\left(\frac{h_+(\mathcal V_N (x) )}{h_-(\mathcal V_N (x) )} 
\geq \exp\left(\sqrt{\frac{Na(N)}{d}}\right)\right) &\geq \PP\left(\exp(|Z_{1,2}|) \geq \exp\left(\sqrt{\frac{Na(N)}{d}}\right)\right) \\ 
&= \PP\left(|Z_{1,2}| \geq \sqrt{\frac{Na(N)}{d}}\right) \geq  \dfrac{a(N)^2}{12d b(N)} \\ &= \dfrac{1}{12d} \dfrac{\min_{1 \leq i \leq N} \EE(E_i^2)^2}{\max_{1 \leq i \leq N} \mathbb{E}(E_i^4) }.    
\end{align*} \qed

% \subsubsection*{Proof of Corollary \ref{cor_moments}}
% We apply the previous lemma \ref{lemmaPZ} by bounding the moments from below and above using the assumptions of the proposition. We then obtain
%     \begin{align*}
% \PP\left(\frac{h_+(\mathcal V _ N (x) )}{h_-(\mathcal V_N  (x) )} 
% \geq \exp\left(\sqrt{\frac{N}{d} c_1 } \right)\right) &\geq \dfrac{1}{12d} \dfrac{c_1^2}{c_2}.    
% \end{align*} \qed

% \subsubsection*{Proof of Corollary \ref{cor_PRT}}
% By assumption on the random variables $(S_i)_i$, the $(U_i)_i$ share the same distribution, so the minimum and maximum of the moments of the $(E_i)_i$ are constant. \qed

\section{Auxiliary results and technical lemmas}

Let us start with a result establishing some conditional independence property under \ref{cond:D}.

%\begin{align*}
%\mathbb E [ f(\epsilon_1) g(\epsilon_2) | X_1,X_2] &=\mathbb E [ \mathbb E [ f(\epsilon_1) | X_1,X_2,\epsilon_2] g(\epsilon_2) | X_1,X_2] \\
%&= \mathbb E [ \mathbb E [ f(\epsilon_1) | X_1] g(\epsilon_2) | X_1,X_2] \\
%&= \mathbb E [ f(\epsilon_1) | X_1] \mathbb E [  g(\epsilon_2) | X_2] .
%\end{align*}
%This implies that $\epsilon_1$  and $\epsilon _2$ are independent given $X_1,X_2$. So an intermediate lemma would be needed:
\begin{lemma}\label{lemmaA}
    Assume \ref{cond:D}. Then $(Y_i) _{i = 1,\ldots, n} $ is an independent collection of random variables, conditionally on $(X_i)_{i = 1,\ldots, n}$. 
\end{lemma}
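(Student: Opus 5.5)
We prove the conditional independence of $Y_1,\ldots,Y_n$ given $(X_1,\ldots,X_n)$ by identifying the conditional joint law as a product of one-dimensional kernels. By \ref{cond:D}, the pairs $(X_i,Y_i)$ are i.i.d. with law $P$, so there is a regular conditional distribution $K(x,\cdot)$ of $Y$ given $X=x$. It is a Markov kernel from $\mathbb R^d$ to $\mathbb R$, and $P(dx,dy)=P^X(dx)K(x,dy)$. Our goal is to show that, for all bounded measurable $f_1,\ldots,f_n:\mathbb R\to\mathbb R$, almost surely
\[
\EE\Big[\prod_{i=1}^n f_i(Y_i)\,\Big|\,X_1,\ldots,X_n\Big]=\prod_{i=1}^n \int f_i(y)\,K(X_i,dy)=\prod_{i=1}^n \EE[f_i(Y_i)\mid X_i].
\]
Since each $\EE[f_i(Y_i)\mid X_i]$ is $\sigma(X_1,\ldots,X_n)$-measurable and also equals $\EE[f_i(Y_i)\mid X_1,\ldots,X_n]$, this factorization is exactly conditional independence.

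To verify the display, it suffices by the definition of conditional expectation to test against products $\prod_i \phi_i(X_i)$ with $\phi_i$ bounded measurable on $\mathbb R^d$. Such products form a $\pi$-system generating $\sigma(X_1,\ldots,X_n)$, and a monotone class argument then extends the identity to all bounded $\sigma(X_{1:n})$-measurable test functions.

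For these test functions, independence of the pairs gives
\[
\EE\Big[\prod_i \phi_i(X_i)f_i(Y_i)\Big]=\prod_i \EE[\phi_i(X_i)f_i(Y_i)].
\]
Each factor satisfies
\[
\EE[\phi_i(X_i)f_i(Y_i)]=\int\phi_i(x)\Big(\int f_i\,dK(x,\cdot)\Big)P^X(dx)=\EE\Big[\phi_i(X_i)\int f_i\,dK(X_i,\cdot)\Big].
\]
Recombining these factors by independence of $X_1,\ldots,X_n$ yields
\[
\EE\Big[\prod_i\phi_i(X_i)\int f_i\,dK(X_i,\cdot)\Big],
\]
which is the required identity.

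The same computation, with all $f_j\equiv 1$ for $j\neq i$, shows $\EE[f_i(Y_i)\mid X_{1:n}]=\int f_i\,dK(X_i,\cdot)$. In particular, the conditional law of $\varepsilon_i$ given $X_{1:n}$ depends only on $X_i$, which is the form used in the proof of Theorem \ref{1}.

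The only delicate point is the measure-theoretic justification: the existence of the kernel $K$, which holds because $\mathbb R$ is Polish, and the monotone class extension from product test functions. Neither poses a real obstacle.
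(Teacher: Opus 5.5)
Your proof is correct and follows the paper's route. You test the factorization against products $\prod_i \phi_i(X_i)$, split the expectation using independence of the pairs $(X_i,Y_i)$, and recombine using independence of the $X_i$'s. The only differences are cosmetic: the paper works directly with $\EE[\psi_i(Y_i)\mid X_i]$, so your regular conditional kernel $K$ is not needed, and you spell out the monotone class extension that the paper leaves implicit in ``by definition of conditional expectation''.
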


\begin{proof}
   % Since $g(X_i)$ is $X_i$--measurable, it is sufficient to prove that $(Y_i = \varepsilon_i + g(X_i))_{i=1, \dots,n}$ is an independent collection of random variables conditionally on $(X_i)_{i=1,\dots,n}.$
    Let $(\phi_i)_{i=1, \dots,n}$ and $(\psi_i)_{i=1, \dots,n}$ be bounded and measurable functions. Then
    \begin{eqnarray*}
    \EE\left( \prod_{i=1}^n \EE\left(\psi_i(Y_i) \middle| X_i \right) \prod_{i=1}^n \phi_i(X_i)\right) &=& \EE\left(\prod_{i=1}^n \EE\left( \psi_i(Y_i) \phi_i (X_i)  | X_i\right)  \right) \\ &=& \prod_{i=1}^n \EE\left( \EE\left( \psi_i(Y_i) \phi_i (X_i) \middle| X_i \right) \right)
    \end{eqnarray*}
because $\EE\left( \psi_i(Y_i) \phi_i (X_i) \middle| X_i \right)$ is $X_i$--measurable and $(X_i)_{i=1,\dots,n}$ are independent. Hence,
    \begin{eqnarray*}
    \EE\left( \prod_{i=1}^n \EE\left(\psi_i(Y_i) \middle| X_i \right) \prod_{i=1}^n \phi_i(X_i)\right) &=& \prod_{i=1}^n \EE\left(\psi_i(Y_i) \phi_i(X_i)\right) \\ &=& \EE\left(\prod_{i=1}^n \psi_i(Y_i) \phi_i(X_i) \right)
    \end{eqnarray*}
by independence of $(X_i, Y_i)_{i=1,\dots,n}$. By definition of conditional expectation, we obtain 
    $$\EE\left(\prod_{i=1}^n \psi_i(Y_i) \middle| (X_j)_{j=1,\dots,n} \right) = \prod_{i=1}^n \EE\left(\psi_i(Y_i) \middle| X_i \right)$$
which means that $(Y_i)_{i=1,\dots,n}$ is an independent collection of random variables conditionally on $(X_i)_{i=1,\dots,n}.$
\end{proof}

The following lemma ensures that, under a critical mass condition and with bounded covariates density (both above and below), the randomness of the data cannot distort the structure of the tree, thereby forcing each split to reduce the volume by a deterministic factor.

\begin{lemma}\label{lemmaWager0}
Consider a tree of depth $N$ on $[0,1]^d$. For $k \in  \{0,1,\dots,N \}$, let $\mathcal{V}_k(x)$ be the unique cell of depth $k$ containing $x$. Assume that we have the $( P^X_n,\alpha)$-regularity condition: for all $x\in [0,1]^d$ and  $k \in \{1,\ldots,  N\} $, $$  P_n^X(\mathcal{V}_{k}(x))  \ge \alpha P_n^X(\mathcal{V}_{k-1}(x)).$$ 
Under the condition $16\log( 4 (2n+1)^{2d} / \delta) \leq n \alpha^{N}$, with probability at least $1 - 2\delta$,  we have, for all $x\in [0,1]^d$ and $k \in  \{0,\dots,N \}$, 
$$  \frac{1}{ 4}    P ^X (\mathcal{V}_{k}(x))  \leq  P^X_n(\mathcal{V}_{k}(x))\leq 2 P^X (\mathcal{V}_{k}(x)) . $$
\end{lemma}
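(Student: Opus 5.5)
The approach is to use the uniform relative (normalized) VC deviation inequality, Theorem \ref{th:vapnik_normalized}, on the class $\mathcal A$ of hyper-rectangles. This class has VC dimension $2d$, so $\mathbb S_{\mathcal A}(2n)\leq (2n+1)^{2d}$. All cells $\mathcal V_k(x)$ lie in $\mathcal A$ whatever the data dependence of the tree. The plan is to prove the two-sided comparison simultaneously over all $A\in\mathcal A$ with enough empirical or true mass, and only then specialise to the cells. Write $t=\log(4(2n+1)^{2d}/\delta)$. The two one-sided versions of the normalized inequality give, each with probability at least $1-\delta$ and uniformly in $A\in\mathcal A$,
\[
P^X(A)-P^X_n(A)\le \sqrt{4P^X(A)\,t/n}\quad\text{and}\quad P^X_n(A)-P^X(A)\le \sqrt{4P^X_n(A)\,t/n}.
\]
The first is of the form already used in the proofs of Theorem \ref{th2:general} and Theorem \ref{th:NN}; the second is the symmetric version, with the empirical measure in the normaliser. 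A union bound gives probability $1-2\delta$.

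The second step is to get a mass lower bound on every relevant cell. By the $(P_n^X,\alpha)$-regularity, iterated from $P_n^X(\mathcal V_0(x))=1$, we have $P^X_n(\mathcal V_k(x))\ge \alpha^k\ge\alpha^N$. Together with the hypothesis, this yields $nP^X_n(\mathcal V_k(x))\ge 16t$ for all $x$ and all $k\le N$. Note that this bound is deterministic given regularity, so no extra probability is lost.

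The third step derives the two inequalities, working on the event of the first step.

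\emph{Upper bound.} Let $p=P^X(A)$ and $q=P^X_n(A)$ with $nq\ge 16t$. From $p\le q+\sqrt{4pt/n}$ and $4t/n\le q/4$ we get $p\le q+\sqrt{pq}/2$. Solving this quadratic inequality in $\sqrt p$ gives $\sqrt p\le \sqrt q\,(1/4+\sqrt{17}/4)\le 1.29\sqrt q$, hence $p\le 1.7q\le 4q$. This is the lower inequality $\tfrac14 P^X\le P^X_n$.

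\emph{Lower bound.} From the second inequality, $q\le p+\sqrt{4qt/n}\le p+\sqrt{q\cdot q/4}=p+q/2$. Hence $q\le 2p$, which is exactly the claimed $P_n^X\le 2P^X$.

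The only delicate point is that the cells are data-dependent. This is harmless precisely because the deviation bounds hold uniformly over the fixed class $\mathcal A$ and the mass condition is checked on the cells themselves afterwards. The main obstacle is therefore making sure the second, empirically normalized, deviation inequality is available from Theorem \ref{th:vapnik_normalized} with constant $4$ and the $4(2n+1)^{2d}$ factor. If only the $P^X$-normalized form is stated, I would instead derive $q\le 2p$ from that form. I would argue by contradiction: if $q>2p$, then $q-p>q/2$, while $q-p\le\sqrt{4pt/n}\le \sqrt{q\cdot q/8}<q/2$ after using $p<q/2$ and $4t/n\le q/4$.
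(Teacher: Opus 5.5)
Your proof is correct and is essentially the paper's. Both use the first and third statements of Theorem \ref{th:vapnik_normalized} over hyper-rectangles with a union bound, together with the deterministic bound $P_n^X(\mathcal V_k(x))\ge\alpha^N$. The only variation is that you obtain $P^X\le 4P_n^X$ directly by solving the quadratic in $\sqrt{p}$ (with a better constant), whereas the paper first uses $P_n^X\le 2P^X$ to get $nP^X(\mathcal V_k(x))\ge 8t$ and then applies the first statement.

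The empirically normalized inequality you were unsure about is exactly the third statement of Theorem \ref{th:vapnik_normalized}, so your fallback is unnecessary. As written, the fallback would not work anyway: the first statement bounds $p-q$, not $q-p$.
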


\begin{proof}
For all $k\in \{1,\ldots, N\}$ and all $x\in [0,1]^d$, we have that $P_n^X(\mathcal V_k(x)) \geq \alpha P_n^X(\mathcal{V}_{k-1}(x))$. It follows that for all $k\in \{0,1,\ldots, N\}$ and all $x\in [0,1]^d$, $ P_n^X(\mathcal V_k(x))  \geq  \alpha^{k} P_n^X(\mathcal{V}_0(x)) = \alpha^{k}\geq \alpha^N $. As a consequence, our condition $16 \log( 4 (2n+1)^{2d} / \delta) \leq n \alpha^{N}$ implies that 
$$ \text{for all }x\in [0,1]^d \text{ and } k\in \{0,1,\ldots, N\} , \qquad \frac{ 4 \log( 4 (2n+1)^{2d} / \delta) }{ n P_n^X(\mathcal V_k(x)) } \leq \frac 1  4. $$ 
Using Vapnik's inequality (last statement in Theorem \ref{th:vapnik_normalized}) with the class $\mathcal{A}$ of hyper-rectangles in $\mathbb{R}^d$, we have $\mathcal{S}_{2n}(\mathcal{A}) \leq (2n+1)^{2d}$. As a consequence, with probability $1-\delta$, for all hyper-rectangles $V$,
\begin{align*}
    P^X(V) &\geq P_n^X (V) \left(1-\sqrt { \frac{4 \log( 4 \mathcal{S}_{2n}(\mathcal{A}) / \delta) }{ nP_n^X (V)} } \right) \\
    &\geq P_n^X (V) \left(1-\sqrt { \frac{4 \log( 4 (2n+1)^{2d}  / \delta) }{ nP_n^X (V)} } \right)\geq \frac 1 2 P_n^X (V). 
\end{align*}
The latter being valid for all hyper-rectangles, it must be true for $V = \mathcal{V}_{k}(x) $, for all $k = 0,1,\ldots, N$ and all $x\in [0,1]^d$, leading to, with probability $1-\delta$, for all  $x\in [0,1]^d$ and all $k\in \{ 0,\ldots, N\}$,
\begin{align}\label{event1_}
  P^X (\mathcal{V}_{k}(x) ) \geq \frac 1 2  P_n^X (\mathcal{V}_{k}(x) ),%\qquad   P^X (\mathcal{V}_k(x) \backslash \mathcal{V}_{k+1}(x)\} ) \geq \frac 1 2  P_n^X ( \mathcal{V}_k(x) \backslash \mathcal{V}_{k+1}(x) )    
\end{align}
% $$\min (P^X (\mathcal{V}_{k+1}(x) )  , P^X (\mathcal{V}_k(x) \backslash \mathcal{V}_{k+1}(x)\}  )  \geq \frac 1 2 \alpha P_n^X (\mathcal{V}_k(x))    .$$
Note that the above inequality implies that for all $x\in [0,1]^d$ and $k\in \{0,1,\ldots, N \}$,
$$ \frac{ 4 \log( 4 (2n+1)^{2d} / \delta) }{ n P^X(\mathcal V_k(x)) } \leq \frac{ 8 \log( 4 (2n+1)^{2d} / \delta) }{ n P^X_n(\mathcal V_k(x)) }\leq \frac 1  2. $$ 
In a similar way as before, we now apply another Vapnik's inequality (first statement in Theorem \ref{th:vapnik_normalized}) to obtain that, with probability at least $1 - \delta$, for  all $x\in [0,1]^d$ and all integer $k \in \{0,\dots,N\}$,
\begin{align}\label{event2_} 
P^X_n(\mathcal{V}_{k}(x))  &\geq P ^X (\mathcal{V}_{k}(x)) \left(1-\sqrt { \frac{4 \log( 4 (2n+1)^{2d} / \delta) }{ nP ^X (\mathcal{V}_{k}(x))  } } \right) \geq \left(1-\sqrt { \frac{1}{ 2} } \right)   P ^X (\mathcal{V}_{k}(x)).
\end{align}
% and
% $$ P^X ( \mathcal{V}_k(x) \backslash \mathcal{V}_{k+1}(x)\})  \geq \frac 1 2 \alpha P^X_n(\mathcal{V}_{k}(x)) \geq \frac 1 2 \left(1-\sqrt { \frac{1}{ 2} } \right)  \alpha P ^X (\mathcal{V}_{k}(x))   ,  $$
Noticing that $ (1- \sqrt {1 / 2} ) \geq 1/4$, combining both events \eqref{event1_}, \eqref{event2_} yields, with probability $1-2\delta$, for all $x\in [0,1]^d$ and  $k\in \{ 0,\ldots , N\}$,
$$P^X (\mathcal{V}_{k}(x)  \geq \frac 1 2  P^X_n(\mathcal{V}_{k}(x))\geq  \frac{1}{ 8}    P ^X (\mathcal{V}_{k}(x)) . $$
    
\end{proof}

\begin{lemma}\label{lemmaWager1}
Consider a tree of depth $N$ on $[0,1]^d$. For $k \in  \{0,1,\dots,N \}$, let $\mathcal{V}_k(x)$ be the unique cell of depth $k$ containing $x$. Assume that we have the $( P^X,\alpha)$-regularity condition: for all $x\in [0,1]^d$ and  $k \in \{1,\ldots,  N\} $, $$  P^X(\mathcal{V}_{k}(x))  \ge \alpha P^X(\mathcal{V}_{k-1}(x)).$$ 
Under the condition $16\log( 4 (2n+1)^{2d} / \delta) \leq n \alpha^{N}$, with probability at least $1 - 2\delta$,  we have, for all $x\in [0,1]^d$ and $k \in  \{0,\dots,N \}$, 
$$  \frac{1}{ 4}    P_n ^X (\mathcal{V}_{k}(x))  \leq  P^X(\mathcal{V}_{k}(x))\leq 2 P_n^X (\mathcal{V}_{k}(x)). $$
\end{lemma}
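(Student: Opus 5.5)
The plan mirrors the proof of Lemma \ref{lemmaWager0} with the roles of $P^X$ and $P_n^X$ exchanged; the regularity now propagates true masses. From the $(P^X,\alpha)$-regularity and $P^X(\mathcal V_0(x)) = P^X([0,1]^d)=1$, induction on $k$ gives, for all $x\in[0,1]^d$ and $k\in\{0,\ldots,N\}$,
$$P^X(\mathcal V_k(x)) \geq \alpha^k \geq \alpha^N .$$
Combined with $16\log(4(2n+1)^{2d}/\delta)\leq n\alpha^N$, this yields the deterministic bound
$$\frac{4\log(4(2n+1)^{2d}/\delta)}{nP^X(\mathcal V_k(x))}\leq \frac14 .$$
This holds for every cell of the tree, even though the cells may depend on the data, because the bound uses only the deterministic quantity $\alpha^N$.

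Next we apply the first (lower-tail) statement of Theorem \ref{th:vapnik_normalized} to the class $\mathcal A$ of hyper-rectangles, with $\mathbb S_{\mathcal A}(2n)\leq (2n+1)^{2d}$. With probability at least $1-\delta$, uniformly over all hyper-rectangles $V$,
$$P_n^X(V)\geq P^X(V)\Bigl(1-\sqrt{4\log(4(2n+1)^{2d}/\delta)/(nP^X(V))}\Bigr).$$
Since the bound is uniform over the class, it holds in particular for the data-dependent cells $V=\mathcal V_k(x)$. The previous display then gives $P_n^X(\mathcal V_k(x))\geq (1-1/2)P^X(\mathcal V_k(x))$, hence $P^X(\mathcal V_k(x))\leq 2P_n^X(\mathcal V_k(x))$, which is the upper inequality. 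Consequently
$$\frac{4\log(\cdots)}{nP_n^X(\mathcal V_k(x))}\leq \frac{8\log(\cdots)}{nP^X(\mathcal V_k(x))}\leq \frac12 .$$

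For the lower inequality we use the last statement of Theorem \ref{th:vapnik_normalized}, the normalization by $P_n^X$. With probability at least $1-\delta$, for all hyper-rectangles,
$$P^X(V)\geq P_n^X(V)\Bigl(1-\sqrt{4\log(\cdots)/(nP_n^X(V))}\Bigr)\geq (1-\sqrt{1/2})\,P_n^X(V)\geq \tfrac14 P_n^X(V).$$
A union bound over the two events gives probability at least $1-2\delta$.

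The only delicate point is the order of the two steps. The deterministic lower bound on $P^X(\mathcal V_k(x))$ must be used first, and the resulting control of $P_n^X(\mathcal V_k(x))$ must come before the $P_n$-normalized inequality is invoked. Otherwise that inequality has no usable smallness condition, since $P_n^X$ of the cells is not directly bounded in this lemma. The whole argument also relies on the uniformity of the VC bounds over all hyper-rectangles, which is what allows data-dependent cells.
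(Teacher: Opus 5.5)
Your proposal is correct and follows the paper's proof essentially step for step. Both arguments first derive the deterministic bound $P^X(\mathcal V_k(x))\ge\alpha^N$ from the regularity condition. They then apply the $P^X$-normalized Vapnik inequality over hyper-rectangles to get $P^X\le 2P_n^X$, use this to verify the smallness condition for the $P_n^X$-normalized inequality yielding $P^X\ge(1-\sqrt{1/2})P_n^X\ge\frac14 P_n^X$, and conclude with a union bound.
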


\begin{proof}
The proof is similar to that of Lemma \ref{lemmaWager0} putting  $P^X$ in place of $P_n^X$ and conversely. For all $k\in \{1,\ldots, N\}$ and all $x\in [0,1]^d$, we have that $P^X(\mathcal V_k(x)) \geq \alpha P ^X(\mathcal{V}_{k-1}(x))$. It follows that for all $k\in \{0,1,\ldots, N\}$ and all $x\in [0,1]^d$, $ P^X(\mathcal V_k(x))  \geq  \alpha^{k} P^X(\mathcal{V}_0(x)) = \alpha^{k}\geq \alpha^N $. As a consequence, our condition $16 \log( 4 (2n+1)^{2d} / \delta) \leq n \alpha^{N}$ implies that 
$$ \text{for all }x\in [0,1]^d \text{ and } k\in \{0,1,\ldots, N\} , \qquad \frac{ 4 \log( 4 (2n+1)^{2d} / \delta) }{ n P^X(\mathcal V_k(x)) } \leq \frac 1  4. $$ 
We now apply another Vapnik's inequality (first statement in Theorem \ref{th:vapnik_normalized}) to obtain that, with probability at least $1 - \delta$, for  all $x\in [0,1]^d$ and all integer $k \in \{0,\dots,N\}$,
\begin{align}\label{event2_0} 
P^X_n(\mathcal{V}_{k}(x))  &\geq P ^X (\mathcal{V}_{k}(x)) \left(1-\sqrt { \frac{4 \log( 4 (2n+1)^{2d} / \delta) }{ nP ^X (\mathcal{V}_{k}(x))  } } \right) \geq  \frac{1}{ 2}    P ^X (\mathcal{V}_{k}(x)).
\end{align}
Note that the above inequality implies that for all $x\in [0,1]^d$ and $k\in \{0,1,\ldots, N \}$,
$$ \frac{ 4 \log( 4 (2n+1)^{2d} / \delta) }{ n P^X_n(\mathcal V_k(x)) } \leq \frac{ 8 \log( 4 (2n+1)^{2d} / \delta) }{ n P^X(\mathcal V_k(x)) }\leq \frac 1  2. $$ 
Using Vapnik's inequality (last statement in Theorem \ref{th:vapnik_normalized}), we have, with probability $1-\delta$, for all $x\in [0,1]^d$ and $k\in \{0,1,\ldots, N \}$,
\begin{align}\label{event1_0}
    P^X(V)   &\geq P_n^X (V_k(x)) \left(1-\sqrt { \frac{4 \log( 4 (2n+1)^{2d}  / \delta) }{ nP_n^X (V_k(x))} } \right)\geq \left(1-\sqrt { \frac{1 }{2} } \right) P_n^X (V_k(x)). 
\end{align}
Noticing that $ (1- \sqrt {1 / 2} ) \geq 1/4$, combining both events \eqref{event2_0}, \eqref{event1_0} yields, with probability $1-2\delta$, for all $x\in [0,1]^d$ and  $k\in \{ 0,\ldots , N\}$,
$$P^X _n (\mathcal{V}_{k}(x))  \geq \frac 1 2  P^X(\mathcal{V}_{k}(x))\geq  \frac{1}{ 8}    P ^X_n (\mathcal{V}_{k}(x)) . $$

\end{proof}

\begin{lemma}\label{lemmaWager_new}
Consider a tree of depth $N$ on $[0,1]^d$. For $k \in  \{0,1,\dots,N \}$, let $\mathcal{V}_k(x)$ be the unique cell of depth $k$ containing $x$.
 Assume that we have the $(\alpha,P_n^X)$-regularity condition: for all $x\in [0,1]^d$ and  $k \in \{1,\ldots,  N\} $, 
$  P_n^X(\mathcal{V}_{k}(x))  \ge \alpha P_n^X(\mathcal{V}_{k-1}(x))$. Under the condition $16\log( 4 (2n+1)^{2d} / \delta) \leq n \alpha^{N}$, with probability at least $1 - 2\delta$,  we have, for all $x\in [0,1]^d$ and $k \in  \{1,\dots,N \}$, 
\begin{equation*}
 P^X(\mathcal{V}_{k}(x))  \ge \frac{\alpha}{8} P^X(\mathcal{V}_{k-1}(x)).    
\end{equation*}
Moreover, if $X$ admits a density $f_X$ bounded below by a constant $b > 0$ and above by a constant $M > 0$, then, with probability $1-2\delta$,  we have, for all $x\in [0,1]^d$ and $k \in  \{1,\dots,N \}$, 
$$  \lambda(\mathcal{V}_{k}(x)) \geq \frac{\alpha b}{8M} \lambda(\mathcal{V}_{k-1}(x)) .$$
\end{lemma}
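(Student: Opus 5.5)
The plan is to combine Lemma \ref{lemmaWager0} with the $(\alpha,P_n^X)$-regularity hypothesis, and then convert probability ratios into Lebesgue-volume ratios using the density bounds.

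First, apply Lemma \ref{lemmaWager0}. Under $16\log(4(2n+1)^{2d}/\delta)\leq n\alpha^N$, it gives an event of probability at least $1-2\delta$ on which, simultaneously for all $x\in[0,1]^d$ and all $k\in\{0,\dots,N\}$,
\[
c_1\, P^X(\mathcal V_k(x))\leq P_n^X(\mathcal V_k(x))\leq 2P^X(\mathcal V_k(x)).
\]
The lemma's displayed statement has $c_1=1/4$, but its proof really delivers $c_1=1/2\cdot(1-\sqrt{1/2})$ or similar, so I will take whatever constant the proof yields. The uniformity in $x$ and $k$ comes from Vapnik's normalized inequality over all hyper-rectangles, so no extra union bound is needed.

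On this event, fix $x$ and $k\geq 1$ and chain the inequalities:
\[
P^X(\mathcal V_k(x))\geq \tfrac12 P_n^X(\mathcal V_k(x))\geq \tfrac{\alpha}{2}P_n^X(\mathcal V_{k-1}(x))\geq \tfrac{\alpha}{2}c_1 P^X(\mathcal V_{k-1}(x)).
\]
To get exactly $\alpha/8$ I need $c_1\geq 1/4$. The step $P^X\geq \frac12 P_n^X$ is \eqref{event1_} in the proof of Lemma \ref{lemmaWager0}. The step $P_n^X\geq (1-\sqrt{1/2})P^X\geq \frac14 P^X$ is \eqref{event2_}. These two inequalities give the factor $\frac12\cdot\frac14=\frac18$.

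The main point requiring care is bookkeeping. I must use the two one-sided inequalities \eqref{event1_} and \eqref{event2_} directly, rather than the combined (and slightly garbled) final display of Lemma \ref{lemmaWager0}. This ensures the constant is $1/8$ and that the total failure probability stays at $2\delta$.

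For the volume statement, use $b\lambda(A)\leq P^X(A)=\int_A f_X\leq M\lambda(A)$ for every rectangle $A\subset[0,1]^d$. On the same event this gives
\[
M\lambda(\mathcal V_k(x))\geq P^X(\mathcal V_k(x))\geq \tfrac{\alpha}{8}P^X(\mathcal V_{k-1}(x))\geq \tfrac{\alpha b}{8}\lambda(\mathcal V_{k-1}(x)).
\]
Dividing by $M$ yields the claim, with the same probability $1-2\delta$.
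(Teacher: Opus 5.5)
Your proof is correct and is essentially the paper's argument: on the probability-$(1-2\delta)$ event of Lemma~\ref{lemmaWager0} you chain $P^X(\mathcal V_k(x))\geq \frac12 P_n^X(\mathcal V_k(x))\geq \frac{\alpha}{2}P_n^X(\mathcal V_{k-1}(x))\geq \frac{\alpha}{8}P^X(\mathcal V_{k-1}(x))$, and then pass to volumes via $b\lambda(V)\leq P^X(V)\leq M\lambda(V)$. Your hedge about the constant $c_1$ is unnecessary, since the stated bounds $\frac14 P^X\leq P_n^X\leq 2P^X$ of Lemma~\ref{lemmaWager0} are exactly what the two one-sided Vapnik inequalities in its proof give (using $1-\sqrt{1/2}\geq 1/4$), so you may cite the lemma directly as the paper does.
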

\begin{proof}
For all $k\in \{1,\ldots, N\}$ and all $x\in [0,1]^d$, we have that $P_n^X(\mathcal V_k(x)) \geq \alpha P_n^X(\mathcal{V}_{k-1}(x))$. 
Applying Lemma \ref{lemmaWager0}, we obtain with probability $1-2\delta $, 
for all $k\in \{1,\ldots, N\}$ and all $x\in [0,1]^d$,
$$ 2 P^X(\mathcal V_k(x)) \geq P_n^X(\mathcal V_k(x)) \geq \alpha P_n^X(\mathcal{V}_{k-1}(x)) \geq \frac{\alpha }{4} P^X(\mathcal V_{k-1}(x)).$$
Using the boundedness assumptions on $f_X$, we obtain, for any hyper-rectangle $V\subset [0,1]^d$, $M \lambda(V) \geq  P^X(V ) \geq b \lambda(V)$, which allows to conclude.

\end{proof}

\begin{lemma}\label{reci_wager_new}
Let $\delta \in (0, 1/2)$. Consider a tree of depth $N$ such that there exists $\rho \in (0, 1/2]$ where the relative split positions $(U_k(x))_{i=1,\dots,N}$ satisfy $U_k(x) \in [\rho, 1-\rho]$ for all $k \in \{1,\dots,N\}$ and $x\in [0,1]^d$. Suppose that the density of $X$, $f_X$, is such that $0 < b \leq f_X(x) \leq M < \infty$ for all $x \in [0,1]^d$. Then whenever $16\log( 4 (2n+1)^{2d} / \delta) \leq n (b\rho / M ) ^{N}$, the tree is, with probability at least $1-2\delta$, $(\tilde{\alpha},P_n^X)$-regular with $\tilde{\alpha} = {b\rho}/{(8M)}$.
\end{lemma}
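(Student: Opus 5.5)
The plan is to mirror the structure of Lemma \ref{lemmaWager_new}, but in the reverse direction. There we transferred $\alpha$-regularity from $P_n^X$ to $P^X$; here we start from a deterministic geometric constraint and move to $P^X$ and then to $P_n^X$, with Lemma \ref{lemmaWager1} as the concentration tool.

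\textbf{Step 1: geometric and population regularity.} Fix $x$ and $k\in\{1,\dots,N\}$. Only the side along $D_k(x)$ changes when passing from $\mathcal V_{k-1}(x)$ to $\mathcal V_k(x)$, and its relative length $U_k(x)$ lies in $[\rho,1-\rho]$. Hence
\[
\lambda(\mathcal V_k(x))\ \geq\ \rho\,\lambda(\mathcal V_{k-1}(x)).
\]
Using $b\lambda(V)\le P^X(V)\le M\lambda(V)$ for every hyper-rectangle $V\subset[0,1]^d$, this gives
\[
P^X(\mathcal V_k(x))\ \ge\ b\rho\,\lambda(\mathcal V_{k-1}(x))\ \ge\ \frac{b\rho}{M}\,P^X(\mathcal V_{k-1}(x)).
\]
So the tree is deterministically $(P^X,\alpha')$-regular with $\alpha'=b\rho/M$, for all $x$ and $k$, without any probabilistic statement.

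\textbf{Step 2: transfer to $P_n^X$.} We apply Lemma \ref{lemmaWager1} with $\alpha=\alpha'$. Its hypothesis $16\log(4(2n+1)^{2d}/\delta)\le n\alpha'^N$ is exactly the assumed condition. On an event of probability at least $1-2\delta$, for all $x$ and all $k\in\{0,\dots,N\}$,
\[
\tfrac14 P_n^X(\mathcal V_k(x))\ \le\ P^X(\mathcal V_k(x))\ \le\ 2P_n^X(\mathcal V_k(x)).
\]
On this event we chain the inequalities:
\[
P_n^X(\mathcal V_k(x))\ \ge\ \tfrac12 P^X(\mathcal V_k(x))\ \ge\ \tfrac{\alpha'}{2}P^X(\mathcal V_{k-1}(x))\ \ge\ \tfrac{\alpha'}{8}P_n^X(\mathcal V_{k-1}(x)).
\]
This is $\tilde\alpha=b\rho/(8M)$-regularity with respect to $P_n^X$, holding uniformly in $x$ and $k$.

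\textbf{Main obstacle.} The delicate point is that the cells, and hence the split positions $U_k(x)$, may depend on the sample, so Lemma \ref{lemmaWager1} must be usable for data-dependent cells. This works because its proof rests on Vapnik inequalities that hold uniformly over all hyper-rectangles, which is exactly what makes those inequalities valid for random cells.

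A secondary check is the constant. The displayed proof of Lemma \ref{lemmaWager1} actually yields the factors $1/2$ and $1/(1-\sqrt{1/2})\le 4$ rather than exactly $2$ and $1/4$. Either way, the product of the two conversion constants is at most $8$, which gives the stated $\tilde\alpha=b\rho/(8M)$.
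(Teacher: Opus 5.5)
Your proposal is correct and follows the paper's proof step for step. The geometric bound $\lambda(\mathcal V_k(x))\ge\rho\,\lambda(\mathcal V_{k-1}(x))$ gives deterministic $(b\rho/M,P^X)$-regularity, and Lemma \ref{lemmaWager1} with $\alpha=b\rho/M$ then yields $P_n^X(\mathcal V_k(x))\ge \frac12 P^X(\mathcal V_k(x))\ge \frac{b\rho}{2M}P^X(\mathcal V_{k-1}(x))\ge\frac{b\rho}{8M}P_n^X(\mathcal V_{k-1}(x))$ on an event of probability at least $1-2\delta$. Your constant check is unnecessary: the factors $1/2$ and $1-\sqrt{1/2}\ge 1/4$ produced in that proof are exactly the constants $2$ and $1/4$ stated in Lemma \ref{lemmaWager1}.
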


\begin{proof}
    % Let $\mathcal{V}$ be a node of the tree, and $\mathcal{V}_L, \mathcal{V}_R$ its children resulting from a split at depth $i \in \{1, \dots, N\}$. Let $\mathcal{W} = \{\mathcal{V}_L, \mathcal{V}_R\}$ denote the pair of children of the cell $\mathcal{V}$. 
    By construction, the split position $U_k(x)$ along the chosen direction $D_i$ determines the volumes of the children. Let $\lambda$ denote the Lebesgue measure. Then, we have, for all $k\in\{1,\ldots, N\}$,
    $$\lambda(\mathcal{V}_{k}(x)) = U_k(x)  \lambda(\mathcal{V}_{k-1}(x)) \quad \text{or} \quad \lambda(\mathcal{V}_{k}(x)) = (1-U_k(x))  \lambda(\mathcal{V}_{k-1}(x)) , $$
depending on $x$ positioning with respect to the split $U_k(x) $.
 Since $U_k(x) \geq \rho $ and $1- U_k(x) \geq \rho $, it follows that, for all $k\in\{1,\ldots, N\}$, 
    $$\lambda(\mathcal{V}_{k+1}(x)) \geq \rho  \lambda(\mathcal{V}_{k}(x)).$$
By assumption on $f_X$, for any Borel set $A \subset S_X$, we have $b \lambda(A) \leq P^X(A) \leq M \lambda(A)$. Applying this to the parent and children nodes, we find, for all $x\in [0,1]^d $, for all $k \in \{1,\ldots, N \}$,
$$P^X(\mathcal{V}_{k}(x)) \geq b \lambda(\mathcal{V}_{k}(x) ) \geq b \rho \lambda(\mathcal{V}_{k-1}(x)) \geq \frac{b \rho}{M} P^X(\mathcal{V}_{k-1}(x)).$$
Consequently, the tree is $( b\rho / M , P^X)$-regular. Since $16\log( 4 (2n+1)^{2d} / \delta) \leq n (b\rho / M ) ^{N}$, we can apply Lemma \ref{lemmaWager1} to obtain that with probability $1-2\delta$, for all $x\in [0,1]^d $, for all $k \in \{1,\ldots, N \}$,
$$2 P^X_n(\mathcal{V}_{k}(x))  \geq P^X(\mathcal{V}_{k}(x))  \geq \frac{b \rho}{M} P^X(\mathcal{V}_{k-1}(x))\geq \frac{b \rho}{4M} P^X_n(\mathcal{V}_{k-1}(x)) .$$
\end{proof}

Let us state the following Vapnik-type inequality \cite{vapnik2015uniform}, which involves some standard-error normalization. The first inequality in the next theorem is Theorem 2.1 in  \cite{anthony1993result} (see also Theorem 1.11 in  \cite{lugosi2002pattern}). The second inequality can be obtained from the first one. For more details, one can also refer to the book by \cite{boucheron2013concentration}, especially chapters 12 and 13, as well as \cite{devroye96probabilistic}.

\begin{theorem}[normalized Vapnik inequality] \label{th:vapnik_normalized}
 Let $(Z, Z_1, \ldots , Z_n) $ is a collection of random variables independent and identically distributed with common distribution $P^Z$ on $(S,\mathcal S)$. For any $A \in \mathcal{S},$ let denote $nP_n^Z(A) = \sum_{i=1}^n \ind_{A}(Z_i)$. For any class $\mathcal A\subset \mathcal S $, $\delta > 0$ and $n\geq 1$, it holds with probability at least $1 -\delta $, for all $A\in \mathcal A$,
$$ P_n^Z(A) \geq P^Z(A) \left(1-\sqrt { \frac{4 \log( 4\mathbb S_\mathcal A(2n) / \delta) }{ n P^Z(A)} } \right).$$ 
In particular, with probability at least $1-\delta$ we have, for all $A\in \mathcal A$,
$$   P^Z(A) \leq \dfrac{4}{n} \log\left(\dfrac{4\, \mathbb S_\mathcal A(2n)}{\delta} \right) + 2 P_n^Z(A).$$ 
In addition, we have with probability at least $1-\delta$, for all $A\in \mathcal A$,
$$  P^Z(A) \geq P^Z_n (A) \left(1-\sqrt { \frac{4 \log( 4 \mathbb S_\mathcal A(2n) / \delta) }{ nP^Z_n (A)} } \right)  . $$
\end{theorem}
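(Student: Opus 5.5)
The statement collects three relative-deviation (normalized) VC inequalities. I would prove the first and third from a single symmetrization argument, and obtain the second from the first by elementary algebra. The target for the first inequality is the tail bound
\[
\PP\left( \sup_{A\in\mathcal A} \frac{P^Z(A) - P_n^Z(A)}{\sqrt{P^Z(A)}} > \eta \right) \leq 4\, \mathbb S_{\mathcal A}(2n) \exp\left(-\frac{n\eta^2}{4}\right),
\]
which is Theorem 2.1 in \cite{anthony1993result}. Setting the right-hand side equal to $\delta$ gives $\eta = \sqrt{4\log(4\mathbb S_{\mathcal A}(2n)/\delta)/n}$. Multiplying through by $P^Z(A)$ and rearranging yields exactly the first display; sets with $P^Z(A)=0$ are trivial.

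For the tail bound itself, I would follow the classical route.
\begin{enumerate}
\item Introduce a ghost sample $Z_1',\dots,Z_n'$ and show that, with a constant-probability loss, the event above implies a comparable deviation between $P_n'(A)$ and $P_n(A)$, normalized by $\sqrt{(P_n(A)+P_n'(A))/2}$. This step uses a one-sided Chebyshev or binomial lower-tail bound for $P_n'(A)$ at a fixed $A$ with $nP^Z(A)$ not too small.
\item Condition on the pooled $2n$ points. By Definition of the shattering coefficient, at most $\mathbb S_{\mathcal A}(2n)$ distinct traces occur, so a union bound reduces the problem to a single fixed trace.
\item For a fixed trace, apply a random swap (or random permutation) between the two halves. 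This gives a Hoeffding or hypergeometric tail of order $\exp(-n\eta^2/4)$.
\end{enumerate}
I expect the main obstacle to be step 1: controlling the normalization, since dividing by $\sqrt{P^Z(A)}$ rather than by a constant makes the ghost-sample comparison delicate. I would handle it as in \cite{anthony1993result}, restricting to $nP^Z(A)\eta^2 \geq 2$ (outside this range the claim is vacuous) so that $P_n'(A)\geq P^Z(A)$ with probability at least $1/4$. This is where the factor $4$ arises.

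The third inequality swaps the roles of $P^Z$ and $P_n^Z$ in the normalization. I would prove it by the same symmetrization, now targeting $\sup_A (P_n^Z(A)-P^Z(A))/\sqrt{P_n^Z(A)}$. After passing to the ghost sample, the denominator is again comparable to the pooled empirical measure, and the permutation step is identical (see also chapters 12--13 of \cite{boucheron2013concentration}). Alternatively, I would cite it directly, as the paper does.

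For the second inequality, let $t = 4\log(4\mathbb S_{\mathcal A}(2n)/\delta)/n$. The first inequality reads $P \leq P_n + \sqrt{tP}$, a quadratic inequality in $\sqrt P$. Solving it gives
\[
\sqrt P \leq \frac{\sqrt t}{2} + \sqrt{\frac t4 + P_n}.
\]
Using $(a+b)^2\leq 2a^2+2b^2$, we get
\[
P \leq \frac t2 + \frac t2 + 2P_n = t + 2P_n,
\]
which is the stated bound on the same event of probability at least $1-\delta$.
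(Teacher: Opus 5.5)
Your proposal follows the paper's proof: the first inequality (and implicitly the third) is taken from Theorem 2.1 of \cite{anthony1993result} and the surrounding literature, and the second is derived exactly as you do, by treating the first as a quadratic inequality in $\sqrt{P^Z(A)}$ and then using $(a+b)^2\le 2a^2+2b^2$. Your symmetrization sketch goes beyond what the paper writes and is the standard route, but your side condition $nP^Z(A)\eta^2\ge 2$ is not the right one: the reduction actually used is that the bound is trivial unless $n\eta^2 > 4\log 4$, and together with $P^Z(A)>\eta^2$ on the bad event this gives $nP^Z(A)>1$, the regime where $P_n'(A)\ge P^Z(A)$ holds with probability at least $1/4$.
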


\begin{proof}
    The first statement is proved in \cite{anthony1993result}.   Let us  prove the second statement. According to the first point, with probability at least $1 - \delta$, we have for all $A \in \mathcal{A}$ $$n P_n^Z(A) - n P^Z(A) \geq     - \sqrt{  4nP^Z(A)    \log(4\mathbb S_{\mathcal A}(2n) /\delta )   },$$   equivalently,  
    $$nP^Z(A) - \sqrt{  4nP^Z(A)    \log(4\mathbb S_{\mathcal A}(2n) /\delta )   } -  n P_n^Z(A)\leq 0.$$
    Setting $x = \sqrt{nP^Z(A)}$, $\alpha = \sqrt{  4 \log(4\mathbb S_{\mathcal A}(2n) /\delta )}$ and $\beta = nP_n^Z(A)$, we have that $x^2 - \alpha x - \beta \leq 0.$ Solving the inequality, we find  $$( \alpha - \sqrt{\alpha^2 + 4\beta}) / 2 \leq x \leq ( \alpha + \sqrt{\alpha^2 + 4\beta} ) / 2.$$ 
    Since $x$ is positive, squaring both sides yields $x^2 \leq ( \alpha + \sqrt{\alpha^2 + 4\beta})^2 / 4$. Then, by the inequality \((a+b)^2 \leq 2(a^2 + b^2)\), it follows that $nP^Z(A) = x^2 \leq \alpha^2 + 2\beta = 4 \log(4\mathbb S_{\mathcal A}(2n) /\delta ) + 2nP_n^Z(A)$
    which is the desired result by dividing each side of the inequality by \(n\).   
\end{proof}

%\begin{theorem}[Vapnik inequality]
%Let $(Z, Z_1, \ldots , Z_n) $ is a collection of random variables independent and identically distributed with common distribution $P$ on $(S,\mathcal S)$. For any Borelian class $\mathcal A\subset \mathcal S $, $\delta > 0$ and $n\geq 1$, it holds with probability at least $1 -\delta $ :
%$$ \forall B \in \mathcal A, \quad \left|\PP_n(B) - \PP(B)  \right|   \leq \sqrt { 8 \log( 2 \mathbb S_{\mathcal A} (n)  / \delta ) / n}  ,$$ 
%where $\PP_n(B) := \sum_{i=1} ^ n  \ind _ B (Z_i) / n$  is the empirical measure and $\mathbb S_{\mathcal A} (n)$ is the shattering coefficient.
%\end{theorem}

%The proof can be found in \cite{hagerup1990guided}. 
%\vspace{5pt}
%\begin{theorem}\label{lemma=chernoff2}
%Let $(Z_i)_{i\geq 1}$ be a sequence of independent random variables with common distribution $B (\mu)$, $\mu \in(0,1) $.   For any $\eta \in (0,1)$ and all $n\geq 1$, we have
%\begin{align*}
 %\PP \left(  \sum_{i=1} ^n Z_i  <  (1-  \eta ) n\mu   \right) \leq \exp( - \eta^2 n \mu / 2)    .
%\end{align*}
 %In addition, we have
%\begin{align*}
 % \PP  \left(  \sum_{i=1} ^n Z_i  > (1+  \eta ) n\mu   \right) \leq \exp( - \eta^2 n \mu  / 3).
%\end{align*}
%\end{theorem}

The following result is standard and known as the multiplicative Chernoff bound for empirical processes. The following version can be found in \cite{hagerup1990guided}.

\begin{theorem}\label{lemma=chernoff}
Let $(Z, Z_1, \ldots , Z_n) $ is a collection of random variables independent and identically distributed with common distribution $P^Z$ on $(S,\mathcal S)$. Let $A$ be a set in $\mathbb R^d$ and let denote $nP_n^Z(A) = \sum_{i=1}^n \ind_{A}(Z_i)$. For any $\delta \in (0,1)$ and all $n\geq 1$, we have with probability at least $1-\delta$
\begin{align*}
P_n^Z(A) \geq \left(1- \sqrt{ \frac{2 \log(1/\delta)  }{ nP^Z(A) } } \right) P^Z(A)  .
\end{align*}
 In addition, for any $\delta \in (0,1)$ and $n\geq 1$, we have with probability at least $1-\delta$
\begin{align*}
P_n^Z(A)  \leq \left(1 +  \sqrt{ \frac{3 \log(1/\delta)   }{ n P^Z(A)} }  \right) P^Z(A).
\end{align*}
\end{theorem}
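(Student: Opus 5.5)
This is the standard multiplicative Chernoff bound for a single set $A$. I would apply it to the Bernoulli sum $nP_n^Z(A)=\sum_{i=1}^n \ind_A(Z_i)$, whose summands are i.i.d. $\mathcal B(\mu)$ with $\mu=P^Z(A)$. If $\mu=0$, both inequalities are trivial under the convention that the right-hand sides are read as $0$ or $+\infty$. So assume $\mu>0$ from now on.

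\textbf{Lower tail.} For $\lambda>0$, Markov's inequality applied to $e^{-\lambda \sum Z_i}$ gives
\[
\PP\big(nP_n^Z(A)\le (1-\eta)n\mu\big)\le \exp\big(n\log(1-\mu+\mu e^{-\lambda})+\lambda(1-\eta)n\mu\big).
\]
I then use $\log(1+u)\le u$ to bound the right-hand side by $\exp\big(n\mu(e^{-\lambda}-1)+\lambda(1-\eta)n\mu\big)$. Optimizing with $\lambda=-\log(1-\eta)$ yields the familiar bound $\big(e^{-\eta}(1-\eta)^{-(1-\eta)}\big)^{n\mu}$. The elementary inequality $-\eta-(1-\eta)\log(1-\eta)\le -\eta^2/2$ for $\eta\in(0,1)$ then gives $\exp(-\eta^2 n\mu/2)$; it follows by comparing Taylor series, since $(1-\eta)\log(1-\eta)\ge -\eta+\eta^2/2$.

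Next I set $\eta=\sqrt{2\log(1/\delta)/(n\mu)}$. If $\eta\ge 1$, the claimed lower bound on $P_n^Z(A)$ is nonpositive, so it holds trivially. Otherwise the complementary event has probability at most $\delta$, which is the first statement.

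\textbf{Upper tail.} The same Cramér--Chernoff route with $\lambda=\log(1+\eta)$ gives $\big(e^{\eta}(1+\eta)^{-(1+\eta)}\big)^{n\mu}$. The key step is the inequality $(1+\eta)\log(1+\eta)-\eta\ge \eta^2/3$, which holds for $\eta\in(0,1]$. I would prove it by checking that $f(\eta)=(1+\eta)\log(1+\eta)-\eta-\eta^2/3$ satisfies $f(0)=f'(0)=0$, and then analysing $f''(\eta)=1/(1+\eta)-2/3$. This is positive on $[0,1/2)$, so $f'$ first increases. Since $f'(1)=\log 2-2/3>0$ and $f'$ is concave, $f'\ge0$ on $[0,1]$.

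Then I take $\eta=\sqrt{3\log(1/\delta)/(n\mu)}$.

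\textbf{Main obstacle.} The only genuine difficulty is the range $\eta>1$ in the upper tail, where $\eta^2/3$ is no longer a valid exponent. There I would use the weaker but still valid bound $(1+\eta)\log(1+\eta)-\eta\ge \eta^2/(2+2\eta/3)$ (Bernstein form). Since the statement is quoted from \cite{hagerup1990guided}, I expect the intended content to be the regime $\eta\le1$. I would either invoke that reference for the general case, or, if the bound is needed only under $n\mu\ge 3\log(1/\delta)$ (as in its use in Proposition \ref{contre_ex}, where $nP^X(V_0)\ge 3\log 4$), restrict to that case, where $\eta\le1$ and the argument above is complete.
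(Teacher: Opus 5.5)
Your Cram\'er--Chernoff computations are correct. The paper gives no proof of this statement and only cites \cite{hagerup1990guided}, which argues exactly this way. The lower tail is complete, including the trivial case $\eta\ge 1$. Your study of $f(\eta)=(1+\eta)\log(1+\eta)-\eta-\eta^2/3$ correctly gives the upper tail whenever $\eta=\sqrt{3\log(1/\delta)/(nP^Z(A))}\le 1$.

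The regime $\eta>1$, however, cannot be handled by either of your fallbacks, because the second inequality is false there. Take $n=1$, $P^Z(A)=1/10$ and $\delta=9/100$. Then $(1+\eta)P^Z(A)=\tfrac{1}{10}+\sqrt{\tfrac{3}{10}\log\tfrac{100}{9}}\approx 0.95<1$, so the bound fails on $\{Z_1\in A\}$, an event of probability $1/10>\delta$. This is not an artifact of $n=1$. With $P^Z(A)=1/n$ and $\delta=e^{-12}$, the claim reads $nP_n^Z(A)\le 7$ with probability at least $1-e^{-12}\approx 1-6.1\cdot 10^{-6}$. Yet $\PP(nP_n^Z(A)\ge 8)\to\PP(\mathrm{Poisson}(1)\ge 8)\approx 1.0\cdot 10^{-5}$.

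Consistently with this, \cite{hagerup1990guided} states the $\beta^2/3$ exponent only for $0<\beta\le 1$, so there is no general case to invoke. Your Bernstein exponent $\eta^2/(2+2\eta/3)$ is valid, but it produces a different bound with an extra additive term of order $\log(1/\delta)/n$, not the claimed one.

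The right way to finish is to make your restriction part of the statement. The second inequality holds under the additional hypothesis $nP^Z(A)\ge 3\log(1/\delta)$, and your argument proves it completely in that case. Nothing downstream breaks. The only place the upper-tail inequality is used is the proof of Proposition~\ref{contre_ex}, with $\delta=1/4$ and $nP^X(V_0)=n\lambda(V_0)\ge 2^{d+3}\log 4$, so there $\eta\le 1/2$.

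If you want a statement valid in all regimes, combine your case $\eta\le 1$ with the elementary bound $(1+\eta)\log(1+\eta)-\eta\ge \eta/3$ for $\eta\ge 1$. It holds at $\eta=1$, and the left side has derivative $\log(1+\eta)\ge\log 2>1/3$. When $nP^Z(A)<3\log(1/\delta)$, choose $\eta=3\log(1/\delta)/(nP^Z(A))$. This gives, with probability at least $1-\delta$, $P_n^Z(A)\le P^Z(A)+\max\bigl\{\sqrt{3P^Z(A)\log(1/\delta)/n},\,3\log(1/\delta)/n\bigr\}$.
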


\end{document}